\numberwithin{equation}{section}
\theoremstyle{definition}
\numberwithin{equation}{section}
\newcommand{\ncom}{\newcommand}
\ncom{\beq}{\begin{equation}}
\ncom{\eeq}{\end{equation}}
\ncom{\bea}{\begin{eqnarray*}}
\ncom{\eea}{\end{eqnarray*}}
\ncom{\beqa}{\begin{eqnarray}}
\ncom{\eeqa}{\end{eqnarray}}
\ncom{\nno}{\nonumber}
\ncom{\non}{\nonumber}
\ncom{\ds}{\displaystyle}
\ncom{\half}{\frac{1}{2}}
\ncom{\mbx}{\makebox{.25cm}}
\ncom{\hs}{\mbox{\hspace{.25cm}}}
\ncom{\rar}{\rightarrow}
\ncom{\Rar}{\Rightarrow}
\ncom{\noin}{\noindent}
\ncom{\bc}{\begin{center}}
\ncom{\ec}{\end{center}}
\ncom{\sz}{\scriptsize}
\ncom{\rf}{\ref}
\ncom{\s}{\sqrt{2}}
\ncom{\sgm}{\sigma}
\ncom{\Sgm}{\Sigma}
\ncom{\psgm}{\sigma^{\prime}}
\ncom{\dt}{\delta}
\ncom{\Dt}{\Delta}
\ncom{\lmd}{\lambda}
\ncom{\Lmd}{\Lambda}
\ncom{\Th}{\Theta}
\ncom{\e}{\eta}
\ncom{\eps}{\epsilon}
\ncom{\pcc}{\stackrel{P}{>}}
\ncom{\lp}{\stackrel{L_{p}}{>}}
\ncom{\dist}{{\rm\,dist}}
\ncom{\sspan}{{\rm\,span}}
\ncom{\re}{{\rm Re\,}}
\ncom{\im}{{\rm Im\,}}
\ncom{\sgn}{{\rm sgn\,}}
\ncom{\ba}{\begin{array}}
\ncom{\ea}{\end{array}}
\ncom{\hone}{\mbox{\hspace{1em}}}
\ncom{\htwo}{\mbox{\hspace{2em}}}
\ncom{\hthree}{\mbox{\hspace{3em}}}
\ncom{\hfour}{\mbox{\hspace{4em}}}
\ncom{\vone}{\vskip 2ex}
\ncom{\vtwo}{\vskip 4ex}
\ncom{\vonee}{\vskip 1.5ex}
\ncom{\vthree}{\vskip 6ex}
\ncom{\vfour}{\vspace*{8ex}}
\ncom{\norm}{\|\;\;\|}
\ncom{\integ}[4]{\int_{#1}^{#2}\,{#3}\,d{#4}}
\ncom{\vspan}[1]{{{\rm\,span}\{ #1 \}}}
\ncom{\dm}[1]{ {\displaystyle{#1} } }
\ncom{\ri}[1]{{#1} \index{#1}}
\newtheorem{theorem}{\bf Theorem}[section]
\newtheorem{remark}{\bf Remark}[section]
\newtheorem{proposition}{Proposition}[section]
\newtheorem{corollary}{Corollary}[section]
\newtheorem{definition}{Definition}[section]
\newtheoremstyle
    {remarkstyle}
    {}
    {11pt}
    {}
    {}
    {\bfseries}
    {:}
    {     }
    {\thmname{#1} \thmnumber{#2} }
\theoremstyle{remarkstyle}
\def\eps{\varepsilon}
\def\E{{\mathbb E}}
\begin{document}
\title{Skellam and time-changed variants of the generalized fractional counting process}
\author[Kuldeep Kumar Kataria]{Kuldeep Kumar Kataria}
\address{Kuldeep Kumar Kataria, Department of Mathematics, Indian Institute of Technology Bhilai, Raipur 492015, India.}
 \email{kuldeepk@iitbhilai.ac.in}
\author[Mostafizar Khandakar]{Mostafizar Khandakar}
\address{Mostafizar Khandakar, Department of Mathematics, Indian Institute of Technology Bhilai, Raipur 492015, India.}
\email{mostafizark@iitbhilai.ac.in}
\subjclass[2010]{Primary: 60G22; Secondary: 60G55}
\keywords{Skellam process; time-changed processes; L\'evy subordinator; LRD property.}
\date{July 17, 2021}
\begin{abstract}
In this paper, we study a Skellam type variant of the generalized counting process (GCP), namely, the generalized Skellam process. Some of its distributional properties such as the probability mass function, probability generating function, mean, variance and covariance are obtained. Its fractional version, namely, the generalized fractional Skellam process (GFSP) is considered by time-changing it with an independent inverse stable subordinator. It is observed that the GFSP is a Skellam type version of the generalized fractional counting process (GFCP) which is a fractional variant of the GCP. It is shown that the one-dimensional distributions of the GFSP are not infinitely divisible. An integral representation for its state probabilities is obtained. We establish its long-range dependence property by using its variance and covariance structure. Also, we consider two time-changed versions of the GFCP. These are obtained by time-changing the GFCP by an independent L\'evy subordinator and its inverse. Some particular cases of these time-changed processes are discussed by considering specific L\'evy subordinators.
\end{abstract}

\maketitle
\section{Introduction}
The point processes with random time are of particular interest in the theory of stochastic processes due to their potential applications in areas such as finance, hydrology, econometrics, {\it etc.} The time fractional Poisson process (TFPP) and the space fractional Poisson process are two extensively studied fractional extensions of the Poisson process. These time-changed processes are obtained by choosing an independent stable subordinator and its inverse process as a time-change component in the Poisson process (see Laskin (2003), Beghin and Orsingher (2009), Meerschaert {\it et al.} (2011), Orsingher and Polito (2012) {\it etc.})

Di Crescenzo {\it et al.} (2016) introduced and studied the generalized fractional counting process (GFCP) $\{M^{\alpha}(t)\}_{t\ge0}$, $0<\alpha\le 1$ whose state probabilities $p^{\alpha}(n,t)=\mathrm{Pr}\{M^{\alpha}(t)=n\}$ satisfy the following system of fractional differential equations:
\begin{equation}\label{cre}
\partial_{t}^{\alpha}p^{\alpha}(n,t)=-\Lambda p^{\alpha}(n,t)+	\sum_{j=1}^{\min\{n,k\}}\lambda_{j}p^{\alpha}(n-j,t),\ \ n\ge0,
\end{equation}
with the initial conditions
\begin{equation*}
p^{\alpha}(n,0)=\begin{cases}
1,\ \ n=0,\\
0,\ \ n\ge1.
\end{cases}
\end{equation*}
Here, $\Lambda=\lambda_{1}+\lambda_{2}+\dots+\lambda_{k}$ for a fixed positive integer $k$ and $\partial_{t}^{\alpha}$ is the Caputo fractional derivative defined in (\ref{caputo}).  The GFCP performs $k$ kinds of jumps of amplitude $1,2,\dots,k$ with positive rates $\lambda_{1}, \lambda_{2},\dots,\lambda_{k}$, respectively. 

For $\alpha=1$, the GFCP reduces to the generalized counting process (GCP) $\{M(t)\}_{t\ge0}$. For $k=1$, the GFCP and the GCP reduces to the TFPP and the Poisson process, respectively. It is known that (see Di Crescenzo {\it et al.} (2016))
\begin{equation}\label{ed}
M^{\alpha}(t)\stackrel{d}{=}M\left(Y_{\alpha}(t)\right),
\end{equation}
where the GCP $\{M(t)\}_{t\ge0}$ is independent of the inverse stable subordinator $\{Y_{\alpha}(t)\}_{t\ge0}$. Here, $\stackrel{d}{=}$ denotes equality in distribution. Kataria and Khandakar (2021c) studied some additional properties of these generalized processes which includes an application of the GCP in risk theory. It is shown by them that many known counting processes such as the Poisson process of order $k$, the P\'olya-Aeppli process of order $k$, the negative binomial process, the convoluted Poisson process and their fractional versions {\it etc.} are particular cases of the GFCP.

It is important to note that the recently studied time-changed processes are mainly constructed by time-changing a point process with an independent subordinator and its inverse. A subordinator $\{D_f(t)\}_{t\ge0}$ is a one-dimensional L\'evy process  which is characterized by the following Laplace transform (see Applebaum (2009), Section 1.3.2):
\begin{equation*}
\mathbb{E}\left(e^{-sD_f(t)}\right)=e^{-tf(s)},
\end{equation*}
where
\begin{equation*}
f(s)=\int_0^\infty \left(1-e^{-sx}\right)\,\mu(\mathrm{d}x),\ \ s>0,
\end{equation*}
is called the Bern\v stein function. Here, $\mu$ is a non-negative  L\'evy measure that satisfies $\mu([0,\infty))=\infty$ and
$\displaystyle\int_0^\infty \min\{x,1\}\,\mu(\mathrm{d}x)<\infty$. It has non-decreasing sample paths and $D_f(0)=0$ almost surely ({\it a.s.}). The first passage time of a subordinator is called the inverse subordinator. It is defined as
\begin{equation*}
H_f (t)\coloneqq\inf\{r\ge0: D_f (r)> t\}, \ \ t\ge0.
\end{equation*}

In this paper, we first introduce a Skellam type variant of the GCP. We call it the generalized Skellam process (GSP) and denote it by $\{\mathcal{S}(t)\}_{t\ge0}$. It is defined as follows:
\begin{equation*}
\mathcal{S}(t)\coloneqq M_{1}(t)-M_{2}(t),
\end{equation*}
where  $\{M_{1}(t)\}_{t\ge0}$ and $\{M_{2}(t)\}_{t\ge0}$ are independent generalized counting processes with positive rates $\lambda_{j}$'s and $\mu_{j}$'s, $j=1,2,\dots,k$, respectively. For any integer $n$, we obtain its state probability $q(n,t)=\mathrm{Pr}\{\mathcal{S}(t)=n\}$ in the following form:
\begin{equation*}
q(n,t)=e^{-(\Lambda+\bar{\Lambda})t}\left(\Lambda/\bar{\Lambda}\right)^{n/2}I_{|n|}\left(2t\sqrt{\Lambda \bar{\Lambda}}\right),
\end{equation*}
where  ${\Lambda}=\sum_{j=1}^{k}\lambda_{j}$ and $\bar{\Lambda}=\sum_{j=1}^{k}\mu_{j}$. Here, $I_{|n|}(\cdot)$ denotes the modified Bessel function of first kind defined in (\ref{bessel}). Also, we obtain its probability generating function (pgf), characteristic function, mean, variance, covariance  {\it etc}. 

We consider a fractional version of the GSP, namely, the generalized fractional Skellam process (GFSP). It is denoted by $\{\mathcal{S}^{\alpha}(t)\}_{t\ge0}$, $0<\alpha\leq1$. It is defined as the GSP time-changed by an independent inverse stable subordinator, that is,
\begin{equation*}
\mathcal{S}^{\alpha}(t)\coloneqq
\begin{cases*}
\mathcal{S}(Y_{\alpha}(t)),\ \ 0<\alpha<1,\\
\mathcal{S}(t),\ \ \alpha=1.
\end{cases*}
\end{equation*}
The GFSP is a Skellam type version of the GFCP. It is shown that the state probabilities $q^{\alpha}(n,t)=\mathrm{Pr}\{\mathcal{S}^{\alpha}(t)=n\}$, $n\in \mathbb{Z}$ of  GFSP solves the following system of fractional differential equations: 
\begin{equation*}
\partial_{t}^{\alpha}q^{\alpha}(n,t)=\Lambda\left(q^{\alpha}(n-1,t)-q^{\alpha}(n,t)\right)-\bar{\Lambda}\left(q^{\alpha}(n,t)-q^{\alpha}(n+1,t)\right),
\end{equation*}
with $q^{\alpha}(0,0)=1$ and $q^{\alpha}(n,0)=0, \ n\neq0$. We derive its $r$th factorial moment and obtain an integral representation for its probability mass function (pmf). It is shown that the GFSP exhibits the long-range dependence (LRD) property whereas its increment process has the short-range dependence (SRD) property. We have also proved that the one-dimensional distributions of GFSP are not infinitely divisible.

We consider a time-changed version of the GFCP, namely, the time-changed  generalized fractional counting process-I (TCGFCP-I). It is obtained by time-changing the GFCP by an independent L\'evy subordinator $\{D_f (t)\}_{t\ge0}$ such that $\mathbb{E}\left(D^r_f(t)\right)<\infty$ for all $r>0$. We establish a version of the law of iterated logarithm for it. For $\alpha=1$, we show that the TCGFCP-I is equal in distribution to a limiting case of a suitable compound Poisson process. It is shown that the TCGFCP-I exhibits the LRD property under some suitable restrictions on the L\'evy subordinator. Some particular cases of TCGFCP-I are discussed by taking specific L\'evy subordinator such as the gamma subordinator, tempered stable subordinator and inverse Gaussian subordinator. We derive the L\'evy measure and the associated system of governing differential equations for these particular cases. Another time-changed version of the GFCP, namely, the time-changed  generalized  fractional counting process-II (TCGFCP-II) is considered which is obtained by time-changing the GFCP by an independent inverse subordinator. Some particular cases of the TCGFCP-II are discussed.

\section{Preliminaries}
In this section, we give some known definitions and results about fractional derivatives, some special functions, inverse stable subordinator, the GCP and its fractional version. The results presented here will be used later.
\subsection{Fractional derivatives}
The Caputo fractional derivative is defined as (see Kilbas {\it et al.} (2006))
\begin{equation}\label{caputo}
\partial_{t}^{\alpha}f(t):=\left\{
\begin{array}{ll}
\dfrac{1}{\Gamma{(1-\alpha)}}\displaystyle\int^t_{0} (t-s)^{-\alpha}f'(s)\,\mathrm{d}s,\ \ 0<\alpha<1,\\\\
f'(t),\ \ \alpha=1.
\end{array}
\right.
\end{equation}
For $\gamma\geq 0$, the Riemann-Liouville (R-L) fractional derivative is defined as (see Kilbas {\it et al.} (2006))
\begin{equation}\label{RLd}
D_t^{\gamma}f(t):=\left\{
\begin{array}{ll}
\dfrac{1}{\Gamma{(m-\gamma)}}\displaystyle \frac{\mathrm{d}^{m}}{\mathrm{d}t^{m}}\int^t_{0} \frac{f(s)}{(t-s)^{\gamma+1-m}}\,\mathrm{d}s,\ \ m-1<\gamma<m,\\\\
\displaystyle\frac{\mathrm{d}^{m}}{\mathrm{d}t^{m}}f(t),\ \ \gamma=m,
\end{array}
\right.
\end{equation}
where $m$ is a positive integer.

The following relationship holds for the Caputo and R-L fractional derivatives (see Meerschaert and Straka  (2013)):
\begin{equation}\label{relation}
\partial_{t}^{\alpha}f(t)=D_{t}^{\alpha}f(t)-f(0+)\frac{t^{-\alpha}}{\Gamma(1-\alpha)}.
\end{equation}

\subsection{Some special functions}
Here, we briefly describe three special functions.
\subsubsection{Mittag-Leffler function}
The three-parameter Mittag-Leffler function is defined as (see Kilbas {\it et al.} (2006), p. 45)
\begin{equation*}
E_{\alpha,\beta}^{\gamma}(x)\coloneqq\frac{1}{\Gamma(\gamma)}\sum_{j=0}^{\infty} \frac{\Gamma(j+\gamma)x^{j}}{j!\Gamma(j\alpha+\beta)},\ \ x\in\mathbb{R},
\end{equation*}
where $\alpha>0$, $\beta>0$ and $\gamma>0$. 

It reduces to two-parameter Mittag-Leffler function for $\gamma=1$. It further reduces to the Mittag-Leffler function for  $\gamma=\beta=1$. 

The following result holds for $n\ge0$ (see Kilbas {\it et al.} (2006), Eq. (1.8.22)):
\begin{equation}\label{2.2desww}
E_{\beta, \gamma}^{(n)}(x)=n!E_{\beta, n\beta+\gamma}^{n+1}(x),
\end{equation}
where $E_{\beta, \gamma}^{(n)}(\cdot)$ denotes the $n$th derivative of two-parameter Mittag-Leffler function.
\subsubsection{Bessel function}
The modified Bessel function of first kind is defined as (see Sneddon (1956), p. 114):
\begin{equation}\label{bessel}
I_{n}(x)=\sum_{j=0}^{\infty}\frac{(x/2)^{2j+n}}{j!(j+n)!},\  \ x\in\mathbb{R}.
\end{equation}
For any integer $n$, the following properties hold (see Sneddon (1956) , p. 115):
\begin{equation}\label{besseli}
\begin{cases*}
I_{n}(x)=I_{-n}(x),\\
\dfrac{\mathrm{d}}{\mathrm{d}x}I_{n}(x)=\dfrac{1}{2}\left(I_{n-1}(x)+I_{n+1}(x)\right).
\end{cases*}
\end{equation} 
\subsubsection{Wright function}
The Wright function is defined as (see Mainardi (2010)):
\begin{equation*}
M_{\alpha}(x)=\sum_{j=0}^{\infty}\frac{(-x)^{j}}{j!\Gamma(1-j\alpha-\alpha)},\  \ x\in\mathbb{R},
\end{equation*}
where $0<\alpha<1$.
\subsection{Inverse stable subordinator}
A stable subordinator $\{D_{\alpha}(t)\}_{t\ge0}$, $0<\alpha<1$, is a non-decreasing L\'evy process. Its Laplace transform is given by $\mathbb{E}\left(e^{-sD_{\alpha}(t)}\right)=e^{-ts^{\alpha}}$, $s>0$. Its first passage time $\{Y_{\alpha}(t)\}_{t\ge0}$ is called the inverse stable subordinator which is defined as 
\begin{equation*}
Y_{\alpha}(t)\coloneqq \inf\{x\ge0:D_{\alpha}(x)>t\}.
\end{equation*}
The mean, variance and covariance of the inverse stable subordinator are given by (see Leonenko {\it et al.} (2014))
\begin{align}\label{meani}
\mathbb{E}\left(Y_{\alpha}(t)\right)&=\frac{t^{\alpha}}{\Gamma(\alpha+1)},\\
\operatorname{ Var}\left(Y_{\alpha}(t)\right)&=\left(\frac{2}{\Gamma(2\alpha+1)}-\frac{1}{\Gamma^{2}(\alpha+1)}\right)t^{2\alpha},\label{xswe331}\\
\operatorname{Cov}\left(Y_{\alpha}(s),Y_{\alpha}(t)\right)&=\frac{1}{\Gamma^2(\alpha+1)}\left( \alpha s^{2\alpha}B(\alpha,\alpha+1)+F(\alpha;s,t)\right),\ \ 0<s\le t \label{covin},
\end{align}
where $F(\alpha;s,t)=\alpha t^{2\alpha}B(\alpha,\alpha+1;s/t)-(ts)^{\alpha}$. Here, $B(\alpha,\alpha+1)$  and $B(\alpha,\alpha+1;s/t)$ denote the beta function and the incomplete beta function, respectively. 
  
For fixed $s$ and large $t$, the following result holds (see Kataria and Khandakar (2021c), Eq. (2.5)):
\begin{equation}\label{asi1}
\operatorname{Cov}\left(Y_{\alpha}(s),Y_{\alpha}(t)\right)\sim \frac{1}{\Gamma^2(\alpha+1)}\left( \alpha s^{2\alpha}B(\alpha,\alpha+1)-\frac{\alpha^{2}s^{\alpha+1}}{(\alpha+1)t^{1-\alpha}}\right).
\end{equation}  
  
\subsection{The GCP and its fractional version}
Here, we give some known results for the GCP and its fractional version, the GFCP (see Di Crescenzo {\it et al.} (2016), Kataria and Khandakar (2021c)). 

The state probabilities $p(n,t)=\mathrm{Pr}\{M(t)=n\}$ of GCP are obtained by Di Crescenzo {\it et al.} (2016) which can be represented as follows:
\begin{equation}\label{p(n,t)}
p(n,t)=\sum_{\Omega(k,n)}\prod_{j=1}^{k}\frac{(\lambda_{j}t)^{x_{j}}}{x_{j}!}e^{-\Lambda t},\ \ n\ge 0,
\end{equation}
where $\Omega(k,n)\coloneqq\{(x_{1},x_{2},\dots,x_{k}):x_{1}+2x_{2}+\dots+kx_{k}=n,\ x_{j}\in\mathbb{N}_0\ \forall\ 1\leq j\leq k\}$.

Its pgf $G(u,t)=\mathbb{E}\left(u^{M(t)}\right)$ is given by
\begin{equation}\label{pgfmt}
G(u,t)=\exp\left(-\sum_{j=1}^{k}\lambda_{j}(1-u^{j})t\right),\  \ |u|\le 1
\end{equation}

and its characteristic function $\psi(\xi,t)=\mathbb{E}\left(e^{\omega\xi M(t)}\right)$, $\omega=\sqrt{-1}$  is given by
\begin{equation}\label{wsjh56}
\psi(\xi,t)=\exp\left\{{-t\left(\Lambda-\sum_{j=1}^{k}e^{\omega\xi j}\lambda_{j}\right)}\right\},\ \ \xi\in\mathbb{R}.
\end{equation}

The following limiting result holds for it:
\begin{equation}\label{limit}
\lim_{t\to\infty}\frac{M(t)}{t}=\sum_{j=1}^{k}j\lambda_{j},\ \ \text{in probability}.
\end{equation}
Its mean and variance are obtained by Di Crescenzo {\it et al.} (2016) in the following form:
\begin{equation*}
\mathbb{E}\left(M(t)\right)=r_{1}t\ \ \text{and}\ \  \operatorname{Var}\left(M(t)\right)=r_{2}t,
\end{equation*}
where
\begin{equation}\label{r1r2}
r_{1}=\sum_{j=1}^{k}j\lambda_{j} \ \  \text{and}\ \  r_{2}=\sum_{j=1}^{k}j^{2}\lambda_{j}.
\end{equation}

The mean and covariance of GFCP  are given by (see Kataria and Khandakar (2021c))
\begin{align}
\mathbb{E}\left(M^{\alpha}(t)\right)&=r_{1}\mathbb{E}\left(Y_{\alpha}(t)\right),\label{es}\\	
\operatorname{Cov}\left(M^{\alpha}(s),M^{\alpha}(t)\right)&=r_{2}\mathbb{E}\left(Y_{\alpha}(s)\right)+r_{1}^{2}\operatorname{Cov}\left(Y_{\alpha}(s),Y_{\alpha}(t)\right),\ \ 0<s\leq t.\label{cs}
\end{align}

\section{Generalized Skellam process and its fractional extension}
In this section, we introduce a Skellam type variant of the GCP, namely, the generalized Skellam process (GSP). We denote it by $\{\mathcal{S}(t)\}_{t\ge0}$ and define it as  
\begin{equation*}
\mathcal{S}(t)\coloneqq M_{1}(t)-M_{2}(t),
\end{equation*}
where  $\{M_{1}(t)\}_{t\ge0}$ and $\{M_{2}(t)\}_{t\ge0}$ are independent generalized counting processes with positive rates $\lambda_{j}$'s and $\mu_{j}$'s, $j=1,2,\dots,k$, respectively. 

For $k=1$, the GSP reduces to the Skellam process introduced and studied by Barndorff-Nielsen {\it et al.} (2012). For $\lambda_{j}=\lambda$ and $\mu_{j}=\mu$, $j=1,2,\dots,k$, the GSP reduces to the Skellam process of order $k$ (see Gupta {\it et al.} (2020)).

Using (\ref{pgfmt}), the pgf of GSP can be obtained as follows:
\begin{equation}\label{pgfgsp}
G_{\mathcal{S}}(u,t)=\exp\left\{-t\sum_{j=1}^{k}\left(\lambda_{j}(1-u^{j})+\mu_{j}(1-u^{-j})\right)\right\}.
\end{equation}
It satisfies the following differential equation:
\begin{equation*}
\frac{\mathrm{d}}{\mathrm{d}t}	G_{\mathcal{S}}(u,t)=\sum_{j=1}^{k}\left(\lambda_{j}(u^{j}-1)+\mu_{j}(u^{-j}-1)\right)G_{\mathcal{S}}(u,t)
, \ \ G_{\mathcal{S}}(u,0)=1.
\end{equation*}
\begin{remark}
On substituting	$\lambda_{j}=\lambda$ and $\mu_{j}=\mu$ for all $j=1,2,\dots,k$ in (\ref{pgfgsp}), we get the pgf of Skellam process of order $k$ (see Gupta {\it et al.} (2020), Eq. (39)).
\end{remark}
Similarly, on using (\ref{wsjh56}), the characteristic function of GSP can be obtained as
\begin{equation*}
\psi_{\mathcal{S}}(\xi,t)=	 \exp\left\{-t\left(\Lambda+\bar{\Lambda}-\sum_{j=1}^{k}e^{\omega\xi j}\lambda_{j}-\sum_{j=1}^{k}e^{-\omega \xi 
j}\mu_{j}\right)\right\},
\end{equation*}
where $\Lambda=\sum_{j=1}^{k}\lambda_{j}$ and $\bar{\Lambda}=\sum_{j=1}^{k}\mu_{j}$. Thus, the L\'evy measure of GSP is given by $\Pi_{\mathcal{S}}(\mathrm{d}x)=\sum_{j=1}^{k}\lambda_{j}\delta_{j}\mathrm{d}x+\sum_{j=1}^{k}\mu_{j}\delta_{-j}\mathrm{d}x$, where $\delta_{j}$'s are Dirac measures.

Let $m_{1}=\sum_{j=1}^{k}j(\lambda_{j}-\mu_{j})$ and $m_{2}=\sum_{j=1}^{k}j^{2}(\lambda_{j}+\mu_{j})$. The mean, variance and covariance of GSP are given by
\begin{equation*}
\mathbb{E}\left(\mathcal{S}(t)\right)=m_{1}t,\ \ 
\operatorname{Var}\left(\mathcal{S}(t)\right)=m_{2}t, \ \
\operatorname{Cov}\left(\mathcal{S}(s),\mathcal{S}(t)\right)=m_{2}\min\{s,t\}.
\end{equation*}
The GSP exhibits the overdispersion as $\operatorname{Var}\left(\mathcal{S}(t)\right)-\mathbb{E}\left(\mathcal{S}(t)\right)>0$ for all $t>0$.

The following definition of LRD and SRD property will be used (see  Maheshwari and Vellaisamy (2016)):
\begin{definition}
Let $s>0$ be fixed and $\{X(t)\}_{t\ge0}$ be a stochastic process such that
\begin{equation*}
\operatorname{Corr}(X(s),X(t))\sim c(s)t^{-\gamma},\ \ \text{as}\ t\to\infty,
\end{equation*}
for some $c(s)>0$. The process $\{X(t)\}_{t\ge0}$ has the LRD property if $\gamma\in(0,1)$ and SRD property if $\gamma\in (1,2)$.	.
\end{definition}
\begin{remark}
For fixed $s$ and large $t$, the correlation function of GSP has the following asymptotic behaviour:
\begin{equation*}
\operatorname{Corr}\left(\mathcal{S}(s),\mathcal{S}(t)\right)\sim\sqrt{s}t^{-1/2}.
\end{equation*}
Thus, it exhibits the LRD property.
\end{remark}
On using (\ref{limit}), we obtain the following limiting result for GSP:
\begin{equation}\label{limitS}
\lim\limits_{t\to\infty}\frac{\mathcal{S}(t)}{t}=\sum_{j=1}^{k}j(\lambda_{j}-\mu_{j}),\ \ \text{in probability}.
\end{equation} 

\begin{theorem}
For any $n\in\mathbb{Z}$, the state probability $q(n,t)=\mathrm{Pr}\{\mathcal{S}(t)=n\}$ of GSP is given by
\begin{equation}\label{qkl11}
q(n,t)=e^{-(\Lambda+\bar{\Lambda})t}\left(\Lambda/\bar{\Lambda}\right)^{n/2}I_{|n|}\left(2t\sqrt{\Lambda \bar{\Lambda}}\right).
\end{equation}
\end{theorem}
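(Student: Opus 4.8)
The plan is to read off $q(n,t)=\mathrm{Pr}\{\mathcal{S}(t)=n\}$ as the coefficient of $u^{n}$ in the Laurent expansion (in $u$) of the probability generating function $G_{\mathcal{S}}(u,t)$ recorded in (\ref{pgfgsp}), and to identify that coefficient through the classical generating function of the modified Bessel functions,
\begin{equation*}
\sum_{n=-\infty}^{\infty} I_{n}(x)\,u^{n}=\exp\left(\frac{x}{2}\left(u+u^{-1}\right)\right),\qquad u\neq 0,
\end{equation*}
which itself follows by multiplying out the defining series (\ref{bessel}) and collecting powers of $u$.

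First I would isolate the constant factor in (\ref{pgfgsp}), writing
\begin{equation*}
G_{\mathcal{S}}(u,t)=e^{-(\Lambda+\bar{\Lambda})t}\exp\left(t\sum_{j=1}^{k}\lambda_{j}u^{j}\right)\exp\left(t\sum_{j=1}^{k}\mu_{j}u^{-j}\right),
\end{equation*}
so that $e^{-(\Lambda+\bar{\Lambda})t}$ already matches the prefactor in (\ref{qkl11}). The goal is then to bring the remaining exponent into the symmetric shape $t\sqrt{\Lambda\bar{\Lambda}}\,(v+v^{-1})$ under the multiplicative change of variable $v=\sqrt{\Lambda/\bar{\Lambda}}\,u$. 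Once this is achieved, the Bessel generating function above identifies the coefficient of $v^{n}$ as $I_{n}(2t\sqrt{\Lambda\bar{\Lambda}})$; since $v^{n}=(\Lambda/\bar{\Lambda})^{n/2}u^{n}$, the coefficient of $u^{n}$ equals $(\Lambda/\bar{\Lambda})^{n/2}I_{n}(2t\sqrt{\Lambda\bar{\Lambda}})$. Reinstating the prefactor and invoking $I_{n}=I_{-n}=I_{|n|}$ from (\ref{besseli}) then delivers (\ref{qkl11}).

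The step I expect to be the main obstacle is precisely the reduction of $t\sum_{j=1}^{k}(\lambda_{j}u^{j}+\mu_{j}u^{-j})$ to the two-term form $\Lambda t\,u+\bar{\Lambda}t\,u^{-1}$. For $k=1$ this is an identity and the argument closes immediately, recovering the ordinary Skellam process. For $k\ge 2$ the higher powers $u^{\pm j}$ come weighted by the individual rates and do not visibly telescope into the aggregates $\Lambda$ and $\bar{\Lambda}$ alone, so this is where I would spend the most effort. To keep myself honest I would first match low-order moments: the right-hand side of (\ref{qkl11}) carries mean $(\Lambda-\bar{\Lambda})t$ and variance $(\Lambda+\bar{\Lambda})t$, which must be reconciled with $\mathbb{E}(\mathcal{S}(t))=m_{1}t$ and $\operatorname{Var}(\mathcal{S}(t))=m_{2}t$ as stated above. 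As an independent check I would also run the direct convolution $q(n,t)=\sum_{m\ge 0}p_{1}(n+m,t)\,p_{2}(m,t)$, obtained by conditioning on $M_{2}(t)$ and using independence of $M_{1}$ and $M_{2}$, with $p_{1}$ and $p_{2}$ the GCP state probabilities (\ref{p(n,t)}) for the two rate families; matching this double series term by term against the series (\ref{bessel}) would pin down exactly which combinations of the rates survive in the final expression.
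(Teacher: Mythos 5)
Your proposal does not actually close, but the place where it stalls is exactly the place where the paper's own argument goes wrong, and the moment check you propose settles the matter. The right-hand side of (\ref{qkl11}) is the pmf of the difference of two independent Poisson processes with rates $\Lambda$ and $\bar{\Lambda}$; its pgf is $\exp\{-t(\Lambda(1-u)+\bar{\Lambda}(1-u^{-1}))\}$ and its variance is $(\Lambda+\bar{\Lambda})t$. The GSP has pgf (\ref{pgfgsp}) and variance $m_{2}t=\sum_{j=1}^{k}j^{2}(\lambda_{j}+\mu_{j})t$, as recorded just above Definition 3.1. These coincide only when $\lambda_{j}=\mu_{j}=0$ for all $j\ge2$, i.e.\ in the $k=1$ (ordinary Skellam) case. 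So the reduction of $\sum_{j}\lambda_{j}u^{j}$ to $\Lambda u$ that you flag as the main obstacle cannot be carried out for $k\ge2$: the identity (\ref{qkl11}) holds only for $k=1$, and no rearrangement of your argument will rescue it in general.

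The paper's proof takes the direct convolution route that you list as your independent check: it writes $q(n,t)=\sum_{m\ge0}\mathrm{Pr}\{M_{1}(t)=m+n\}\mathrm{Pr}\{M_{2}(t)=m\}$, substitutes (\ref{p(n,t)}), and then, via the reindexing ``$x_{j}=m_{j}$, $m=x+\sum_{j=1}^{k}(j-1)m_{j}$'', converts the sums over $\Omega(k,m+n)$ and $\Omega(k,m)$, which impose the \emph{weighted} constraints $\sum_{j}jx_{j}=m+n$ and $\sum_{j}jx_{j}=m$, into sums over the \emph{unweighted} constraints $\sum_{i}m_{i}=n+x$ and $\sum_{i}m_{i}=x$, so that the multinomial theorem can produce the aggregates $\Lambda^{n+x}$ and $\bar{\Lambda}^{x}$. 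For $k\ge2$ these constraints are not equivalent, the exponent of $t$ is $\sum_j x_j$ rather than the weighted sum, and the single index $m$ couples the two factors in a way the substitution does not respect; this is precisely the illegitimate collapse of the individual rates into $\Lambda$ and $\bar{\Lambda}$ that your pgf computation refuses to perform. The correct conclusions available are: the exact double series $q(n,t)=\sum_{m\ge0}p_{1}(n+m,t)p_{2}(m,t)$ you write down (valid for all $k$ but not a single Bessel function), or the closed form (\ref{qkl11}) restricted to $k=1$. A quick way to exhibit the contradiction inside the paper itself is to compare the variance implied by (\ref{qkl11}) with the paper's own $\operatorname{Var}(\mathcal{S}(t))=m_{2}t$.
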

\begin{proof}
As $\{M_{1}(t)\}_{t\ge0}$ and $\{M_{2}(t)\}_{t\ge0}$ are independent, we have
\begin{align*}
q(n,t)&=\sum_{m=0}^{\infty}\mathrm{Pr}\{M_{1}(t)=m+n\}\mathrm{Pr}\{M_{2}(t)=m\}\mathbb{I}_{\{n\ge0\}}\\
&\ \ +\sum_{m=0}^{\infty}\mathrm{Pr}\{M_{2}(t)=m+|n|\}\mathrm{Pr}\{M_{1}(t)=m\}\mathbb{I}_{\{n<0\}}\\
&=\sum_{m=0}^{\infty}\left(\sum_{ \Omega(k,m+n)}\prod_{j=1}^{k}\frac{(\lambda_{j}t)^{x_{j}}}{x_{j}!}e^{-\Lambda t}\right)\left(\sum_{ \Omega(k,m)}\prod_{j=1}^{k}\frac{(\mu_{j}t)^{x_{j}}}{x_{j}!}e^{-\bar{\Lambda} t}\right)\mathbb{I}_{\{n\ge0\}}\\
&\ \ +\sum_{m=0}^{\infty}\left(\sum_{ \Omega(k,m+|n|)}\prod_{j=1}^{k}\frac{(\mu_{j}t)^{x_{j}}}{x_{j}!}e^{-\bar{\Lambda} t}\right)\left(\sum_{ \Omega(k,m)}\prod_{j=1}^{k}\frac{(\lambda_{j}t)^{x_{j}}}{x_{j}!}e^{-\Lambda t}\right)\mathbb{I}_{\{n<0\}}
\end{align*}
where we have used (\ref{p(n,t)}) in the last step. Here, $\mathbb{I}_{A}$ denotes the indicator function for the set $A$.

Let $n\geq0$. On setting $x_{j}=m_{j}$ and $m=x+\sum_{j=1}^{k}(j-1)m_{j}$, we get 
\begin{align}\label{key2wsw1}
q(n,t)&=e^{-(\Lambda+\bar{\Lambda})t}\sum_{x=0}^{\infty}\frac{t^{n+2x}}{(n+x)!x!}\left(\sum_{\sum_{i=1}^km_{i}=n+x}(n+x)!\prod_{j=1}^{k}\frac{\lambda_{j}^{m_{j}}}{m_{j}!}\right) \left(\sum_{\sum_{i=1}^km_{i}=x}x!\prod_{j=1}^{k}\frac{\mu_{j}^{m_{j}}}{m_{j}!}\right)\nonumber\\
&=e^{-(\Lambda+\bar{\Lambda})t}\sum_{x=0}^{\infty}\frac{t^{n+2x}}{(n+x)!x!}\Lambda^{n+x}\bar{\Lambda}^{x}\nonumber\\
&=e^{-(\Lambda+\bar{\Lambda})t}\left(\Lambda/\bar{\Lambda}\right)^{n/2}I_{n}\left(2t\sqrt{\Lambda \bar{\Lambda}}\right),
\end{align}
where we have used multinomial theorem in the penultimate step and the definition (\ref{bessel}) of modified Bessel function of first kind in the last step.

 Similarly, for $n<0$, we get
\begin{equation}\label{key2wsw2}
q(n,t)=e^{-(\Lambda+\bar{\Lambda})t}\left(\Lambda/\bar{\Lambda}\right)^{n/2}I_{-n}\left(2t\sqrt{\Lambda \bar{\Lambda}}\right).
\end{equation}
On combining (\ref{key2wsw1}) and (\ref{key2wsw2}), we get the required result.	
\end{proof}

On differentiating (\ref{qkl11}) and then using (\ref{besseli}), we obtain the following result:
\begin{corollary}
The state probabilities of GSP satisfy the following system of differential equations:
\begin{equation}\label{pnt}
\frac{\mathrm{d}}{\mathrm{d}t}q(n,t)=\Lambda\left(q(n-1,t)-q(n,t)\right)-\bar{\Lambda}\left(q(n,t)-q(n+1,t)\right),\ \ n\in \mathbb{Z},
\end{equation}
with initial conditions $q(0,0)=1$ and $q(n,0)=0$, $n\neq 0$.
\end{corollary}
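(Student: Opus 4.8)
The plan is to take the closed-form expression (\ref{qkl11}) for $q(n,t)$ as the starting point and simply differentiate it in $t$, using the two Bessel identities recorded in (\ref{besseli}) to rewrite the resulting $t$-derivative as a linear combination of $q(n-1,t)$, $q(n,t)$ and $q(n+1,t)$. In other words, rather than re-deriving the solution, I verify directly that the known pmf satisfies (\ref{pnt}).

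First I would use the symmetry $I_{|n|}(x)=I_{n}(x)$ (valid for every integer $n$ by the first line of (\ref{besseli})) to rewrite (\ref{qkl11}) in the uniform form $q(n,t)=e^{-(\Lambda+\bar{\Lambda})t}(\Lambda/\bar{\Lambda})^{n/2}I_{n}\bigl(2t\sqrt{\Lambda\bar{\Lambda}}\bigr)$, valid for all $n\in\mathbb{Z}$, so that no separate treatment of positive, negative and zero $n$ is required. Writing $c=\sqrt{\Lambda\bar{\Lambda}}$ and differentiating by the product and chain rules gives $\frac{\mathrm{d}}{\mathrm{d}t}q(n,t)=-(\Lambda+\bar{\Lambda})q(n,t)+2c\,e^{-(\Lambda+\bar{\Lambda})t}(\Lambda/\bar{\Lambda})^{n/2}I_{n}'(2ct)$. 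The recurrence $I_n'(x)=\tfrac12\bigl(I_{n-1}(x)+I_{n+1}(x)\bigr)$ from (\ref{besseli}) then replaces $I_n'(2ct)$ by $\tfrac12\bigl(I_{n-1}(2ct)+I_{n+1}(2ct)\bigr)$.

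The key algebraic step is to reabsorb the shifted Bessel functions back into the state probabilities. From the same closed form one reads off $e^{-(\Lambda+\bar{\Lambda})t}(\Lambda/\bar{\Lambda})^{n/2}I_{n\pm1}(2ct)=(\Lambda/\bar{\Lambda})^{\mp1/2}q(n\pm1,t)$, and since $c\,(\Lambda/\bar{\Lambda})^{1/2}=\sqrt{\Lambda\bar{\Lambda}}\cdot\sqrt{\Lambda/\bar{\Lambda}}=\Lambda$ while $c\,(\Lambda/\bar{\Lambda})^{-1/2}=\bar{\Lambda}$, the two surviving terms collapse to exactly $\Lambda\,q(n-1,t)+\bar{\Lambda}\,q(n+1,t)$. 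Collecting everything yields $\frac{\mathrm{d}}{\mathrm{d}t}q(n,t)=\Lambda\,q(n-1,t)+\bar{\Lambda}\,q(n+1,t)-(\Lambda+\bar{\Lambda})q(n,t)$, which is precisely (\ref{pnt}) after regrouping. The initial conditions are immediate, either from $\mathcal{S}(0)=M_1(0)-M_2(0)=0$ almost surely, or by reading off $I_n(0)=\delta_{n,0}$ from the series (\ref{bessel}).

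The computation is entirely mechanical; the only place to be careful---and what I would flag as the main obstacle---is the bookkeeping of the half-integer prefactor $(\Lambda/\bar{\Lambda})^{n/2}$ together with the argument factor $c=\sqrt{\Lambda\bar{\Lambda}}$. One must check that the powers combine so that the coefficient of $q(n-1,t)$ is exactly $\Lambda$ and that of $q(n+1,t)$ is exactly $\bar{\Lambda}$, and not, say, $\sqrt{\Lambda\bar{\Lambda}}$; this cancellation is what produces the Skellam-type birth--death structure. Invoking the symmetry of $I_{|n|}$ at the outset removes the only other potential nuisance, namely the case analysis that the absolute value in the index would otherwise force.
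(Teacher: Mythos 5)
Your proposal is correct and is exactly the paper's argument: the corollary is stated there with the one-line justification ``on differentiating (\ref{qkl11}) and then using (\ref{besseli})'', which is precisely the computation you carry out, including the cancellation $\sqrt{\Lambda\bar{\Lambda}}\,(\Lambda/\bar{\Lambda})^{\pm 1/2}=\Lambda$ or $\bar{\Lambda}$ that yields the coefficients of $q(n-1,t)$ and $q(n+1,t)$. Nothing further is needed.
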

\begin{remark}
On substituting $k=1$ in (\ref{pnt}), we get the governing system of differential equations for the pmf of Skellam process (see Kerss {\it et al.} (2014), Eq. (2.4)).
\end{remark}
\subsection{Generalized fractional Skellam process}
Here, we consider a fractional version of the GSP. We call it the generalized fractional Skellam process (GFSP) and denote it by $\{\mathcal{S}^{\alpha}(t)\}_{t\ge0}$, $0<\alpha\leq 1$. It is defined as 
\begin{equation}\label{def}
\mathcal{S}^{\alpha}(t)\coloneqq
\begin{cases*}
\mathcal{S}(Y_{\alpha}(t)),\ \ 0<\alpha<1,\\
\mathcal{S}(t),\ \ \alpha=1,
\end{cases*}
\end{equation}
where the GSP $\{\mathcal{S}(t)\}_{t\ge0}$ is independent of the inverse stable subordinator $\{Y_{\alpha}(t)\}_{t\ge0}$. It is important to note that the GFSP is a Skellam type version of the GFCP.

For $k=1$, the GFSP reduces to the fractional Skellam process of type II (see Kerss {\it et al.} (2014)). For $\lambda_{j}=\lambda$ and $\mu_{j}=\mu$, $j=1,2,\dots,k$, the GFSP reduces to the fractional Skellam process of order $k$, introduced and studied by Kataria and Khandakar (2021b).

In the following result, we obtain the governing system of fractional differential equations for the state probabilities of GFSP.
\begin{proposition}
The state probabilities $q^{\alpha}(n,t)=\mathrm{Pr}\{\mathcal{S}^{\alpha}(t)=n\}$, $n\in \mathbb{Z}$ of  GFSP solves the following system of fractional differential equations: 
\begin{equation}\label{pmf}
\partial_{t}^{\alpha}q^{\alpha}(n,t)=\Lambda\left(q^{\alpha}(n-1,t)-q^{\alpha}(n,t)\right)-\bar{\Lambda}\left(q^{\alpha}(n,t)-q^{\alpha}(n+1,t)\right),
\end{equation}
with initial conditions $q^{\alpha}(0,0)=1$ and $q^{\alpha}(n,0)=0, \ n\neq0$. 
\end{proposition}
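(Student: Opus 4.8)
The plan is to exploit the subordination representation (\ref{def}) together with the ordinary governing system (\ref{pnt}) already established for the GSP, and to pass to the fractional setting via the Laplace transform in $t$. Let $h_\alpha(x,t)$ denote the density of the inverse stable subordinator $Y_\alpha(t)$. Since $\mathcal{S}$ and $Y_\alpha$ are independent, conditioning on $Y_\alpha(t)$ gives the subordination formula
\begin{equation*}
q^\alpha(n,t)=\int_0^\infty q(n,x)\,h_\alpha(x,t)\,\mathrm{d}x,\qquad n\in\mathbb{Z}.
\end{equation*}
For $\alpha=1$ the assertion is exactly (\ref{pnt}), so I would assume $0<\alpha<1$ throughout.

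First I would record the key transform identity. Writing $\tilde q(n,s)=\int_0^\infty e^{-st}q(n,t)\,\mathrm{d}t$ and $\tilde q^\alpha(n,s)=\int_0^\infty e^{-st}q^\alpha(n,t)\,\mathrm{d}t$, and using the well-known formula $\int_0^\infty e^{-st}h_\alpha(x,t)\,\mathrm{d}t=s^{\alpha-1}e^{-xs^\alpha}$, the subordination formula yields
\begin{equation*}
\tilde q^\alpha(n,s)=s^{\alpha-1}\int_0^\infty q(n,x)e^{-xs^\alpha}\,\mathrm{d}x=s^{\alpha-1}\tilde q(n,s^\alpha).
\end{equation*}
Separately, taking the Laplace transform of the ordinary system (\ref{pnt}) with $q(n,0)=\delta_{n,0}$ gives
\begin{equation*}
u\tilde q(n,u)-\delta_{n,0}=\Lambda\bigl(\tilde q(n-1,u)-\tilde q(n,u)\bigr)-\bar{\Lambda}\bigl(\tilde q(n,u)-\tilde q(n+1,u)\bigr),
\end{equation*}
valid for every $u>0$; in particular it holds at $u=s^\alpha$.

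Next I would check that the transform of the claimed equation (\ref{pmf}) reproduces precisely this identity. Applying the Laplace transform to (\ref{pmf}) and using the Caputo rule $\mathcal{L}\{\partial_t^\alpha q^\alpha(n,\cdot)\}(s)=s^\alpha\tilde q^\alpha(n,s)-s^{\alpha-1}q^\alpha(n,0)$ together with $q^\alpha(n,0)=\delta_{n,0}$, the left-hand side becomes $s^\alpha\tilde q^\alpha(n,s)-s^{\alpha-1}\delta_{n,0}$. Substituting $\tilde q^\alpha(n,s)=s^{\alpha-1}\tilde q(n,s^\alpha)$ on both sides and cancelling the common factor $s^{\alpha-1}$ reduces the desired equality exactly to the transformed GSP system evaluated at $u=s^\alpha$, which holds by the previous step. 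Inverting the Laplace transform (the transforms here are continuous and uniquely determined) then yields (\ref{pmf}), and the initial conditions are inherited from $\mathcal{S}^\alpha(0)=\mathcal{S}(Y_\alpha(0))=\mathcal{S}(0)=0$.

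The only delicate point is the bookkeeping of the initial term at $t=0$, not any heavy computation. A tempting alternative is to insert the density equation $\partial_t^\alpha h_\alpha(x,t)=-\partial_x h_\alpha(x,t)$ for $x>0$ and integrate by parts in $x$; this route works but requires care, because the Caputo derivative does not commute naively with the subordination integral: the pointwise value $h_\alpha(x,0^+)=0$ for $x>0$ conceals a unit mass at $x=0$, and the boundary term produced by the integration by parts is exactly $\delta_{n,0}h_\alpha(0^+,t)=\delta_{n,0}\,t^{-\alpha}/\Gamma(1-\alpha)$, which is the Caputo--R-L correction appearing in (\ref{relation}). These two spurious contributions cancel, but tracking them correctly is the main obstacle; the Laplace-transform route above sidesteps the issue altogether.
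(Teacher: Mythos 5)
Your argument is correct, but it takes a genuinely different route from the paper. The paper works entirely in the time domain: it applies the Riemann--Liouville derivative $D_t^{\alpha}$ under the subordination integral, uses the governing equation $D_{t}^{\alpha}h_{\alpha}(u,t)=-\partial_u h_{\alpha}(u,t)$ of the inverse stable density, integrates by parts in $u$, and then invokes the Caputo--R-L relation (\ref{relation}) so that the boundary term $q(n,0)\,t^{-\alpha}/\Gamma(1-\alpha)$ coming from $h_{\alpha}(0+,t)=t^{-\alpha}/\Gamma(1-\alpha)$ cancels exactly against the correction term --- precisely the bookkeeping you describe (and correctly diagnose) in your final paragraph, except that the paper uses the R-L rather than the Caputo form of the density equation, which is what makes the cancellation clean. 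Your Laplace-transform route instead reduces everything to the algebraic identity $\tilde q^{\alpha}(n,s)=s^{\alpha-1}\tilde q(n,s^{\alpha})$ and the transformed version of (\ref{pnt}) at $u=s^{\alpha}$; this sidesteps the boundary-term bookkeeping entirely and is arguably more transparent. What it costs is the inversion step: to go from equality of transforms back to (\ref{pmf}) you need $\partial_t^{\alpha}q^{\alpha}(n,\cdot)$ to exist as a continuous (or at least locally integrable) function so that the Caputo transform rule applies and uniqueness of the Laplace transform can be invoked; you assert this in passing rather than verifying it, but the paper's proof makes comparable unjustified interchanges of limits, so your argument is at the same level of rigor. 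Both proofs rest on the same two inputs --- the subordination formula and the integer-order system (\ref{pnt}) --- so the difference is purely in how the fractional derivative is threaded through the integral.
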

\begin{proof}
From (\ref{def}), we have
\begin{equation}\label{ci}
q^{\alpha}(n,t)=\int_{0}^{\infty}q(n,u)h_{\alpha}(u,t)\,\mathrm{d}u,
\end{equation}	
where $q(n,.)$ is the pmf of $\{\mathcal{S}(t)\}_{t\ge0}$ and $h_{\alpha}(.,t)$ is the probability density function (pdf) of $\{Y_{\alpha}(t)\}_{t\ge0}$. Note that $q^{\alpha}(n,0)=q(n,0)$ as $h_{\alpha}(u,0)=\delta_0(u)$. On taking the R-L fractional derivative in (\ref{ci}), we get
\begin{align}\label{Rl}
D_{t}^{\alpha}q^{\alpha}(n,t)&=-\int_{0}^{\infty}q(n,u)\frac{\partial}{\partial u}h_{\alpha}(u,t)\mathrm{d}u\nonumber\\
&=q(n,0)h_{\alpha}(0+,t)+\int_{0}^{\infty}h_{\alpha}(u,t)\frac{\mathrm{d}}{\mathrm{d} u}q(n,u)\,\mathrm{d}u \nonumber\\
&=q(n,0)\frac{t^{-\alpha}}{\Gamma(1-\alpha)}+\int_{0}^{\infty}h_{\alpha}(u,t)\frac{\mathrm{d}}{\mathrm{d} u}q(n,u)\,\mathrm{d}u,
\end{align}
where we have used the following results: $D_{t}^{\alpha}h_{\alpha}(u,t)=-\frac{\partial}{\partial u}h_{\alpha}(u,t)$ and $h_{\alpha}(0+,t)=t^{-\alpha}/\Gamma(1-\alpha)$ (see Meerschaert and Straka  (2013)). 

On using (\ref{relation}) and (\ref{pnt}) in (\ref{Rl}), we get
\begin{align*}
\partial_{t}^{\alpha}q^
{\alpha}(n,t)&=\int_{0}^{\infty}h_{\alpha}(u,t)\frac{\mathrm{d}}{\mathrm{d} u}q(n,u)\,\mathrm{d}u\\
&=\int_{0}^{\infty}\left(\Lambda\left(q(n-1,u)-q(n,u)\right)-\bar{\Lambda}\left(q(n,u)-q(n+1,u)\right)\right)h_{\alpha}(u,t)\,\mathrm{d}u,
\end{align*}
which reduces to the required result on using (\ref{ci}).
\end{proof}	
\begin{remark}
For $k=1$, we get $\Lambda=\lambda_{1}$ and $\bar{\Lambda}=\mu_{1}$.  So, in this case the system given in (\ref{pmf}) reduces to the system of governing fractional differential equations for the state probabilities of fractional Skellam process of type II (see Kerss {\it et al.} (2014), Eq. (3.5)).
\end{remark}
\begin{remark}
The pdf of inverse stable subordinator can be expressed in terms of the Wright function as follows (see Meerschaert {\it et al.} (2015), Eq. (4.7)):
\begin{equation}\label{wright1s}
h_{\alpha}(u,t)=t^{-\alpha}M_{\alpha}\left(ut^{-\alpha}\right).
\end{equation}
On substituting (\ref{qkl11}) and (\ref{wright1s}) in (\ref{ci}), we get an integral representation of the state probabilities of GFSP as follows:
\begin{equation*}
q^{\alpha}(n,t)=
t^{-\alpha}\left(\Lambda/\bar{\Lambda}\right)^{n/2}\int_{0}^{\infty}e^{-(\Lambda+\bar{\Lambda})u}I_{|n|}\left(2u\sqrt{\Lambda \bar{\Lambda}}\right)M_{\alpha}(ut^{-\alpha})\mathrm{d}u,\ \ n\in\mathbb{Z}.
\end{equation*}
\end{remark}	
\begin{proposition}
The pgf of GFSP  is given by
\begin{equation}\label{pgf}
G^{\alpha}_{\mathcal{S}}(u,t)=	E_{\alpha,1}\left(\sum_{j=1}^{k}\left(\lambda_{j}(u^{j}-1)+\mu_{j}(u^{-j}-1)\right)t^{\alpha}\right).
\end{equation}
\end{proposition}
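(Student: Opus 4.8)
The plan is to use the subordination in (\ref{def}) to reduce the pgf of the GFSP to a moment generating function of the inverse stable subordinator, and then to recognize the resulting series as a Mittag-Leffler function. First I would condition on $Y_\alpha(t)$. Since $\{\mathcal{S}(t)\}_{t\ge0}$ is independent of $\{Y_\alpha(t)\}_{t\ge0}$, with $h_\alpha(\cdot,t)$ denoting the pdf of $Y_\alpha(t)$ as in (\ref{ci}), I obtain
$$G^\alpha_{\mathcal{S}}(u,t)=\mathbb{E}\left(u^{\mathcal{S}(Y_\alpha(t))}\right)=\int_0^\infty \mathbb{E}\left(u^{\mathcal{S}(y)}\right)h_\alpha(y,t)\,\mathrm{d}y=\int_0^\infty G_{\mathcal{S}}(u,y)\,h_\alpha(y,t)\,\mathrm{d}y.$$
Substituting the GSP pgf (\ref{pgfgsp}) and abbreviating $\theta(u)\coloneqq\sum_{j=1}^k\left(\lambda_j(u^j-1)+\mu_j(u^{-j}-1)\right)$ turns the integrand into $e^{\theta(u)y}$, so that $G^\alpha_{\mathcal{S}}(u,t)=\mathbb{E}\left(e^{\theta(u)Y_\alpha(t)}\right)$.

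Next I would expand the exponential and integrate term by term, giving
$$G^\alpha_{\mathcal{S}}(u,t)=\sum_{m=0}^\infty\frac{\theta(u)^m}{m!}\int_0^\infty y^m h_\alpha(y,t)\,\mathrm{d}y=\sum_{m=0}^\infty\frac{\theta(u)^m}{m!}\,\mathbb{E}\left(Y_\alpha(t)^m\right).$$
Using the moments $\mathbb{E}\left(Y_\alpha(t)^m\right)=m!\,t^{m\alpha}/\Gamma(m\alpha+1)$ of the inverse stable subordinator, of which (\ref{meani}) is the case $m=1$, the factorials cancel and the sum collapses to $\sum_{m=0}^\infty(\theta(u)t^\alpha)^m/\Gamma(m\alpha+1)$, which is precisely $E_{\alpha,1}(\theta(u)t^\alpha)$ by the definition of the two-parameter Mittag-Leffler function given in the Preliminaries. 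This is exactly (\ref{pgf}).

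The step I expect to require the most care is the term-by-term integration, that is, the interchange of the sum and the integral, which must be justified on the admissible range of $u$. For $|u|=1$ the factor $e^{\theta(u)y}$ is bounded and absolute convergence follows from the finiteness of all moments of $Y_\alpha(t)$ together with the fact that $E_{\alpha,1}$ is entire; for real $u>0$ one may invoke Tonelli's theorem directly. I would therefore establish the identity first where it is unambiguous (for instance on the unit circle, where $G^\alpha_{\mathcal{S}}$ coincides with the characteristic function) and extend it, or simply record the range of $u$ for which it holds. A shorter alternative avoiding the series altogether is to read $\mathbb{E}\left(e^{\theta(u)Y_\alpha(t)}\right)$ as the Laplace transform of $h_\alpha(\cdot,t)$ evaluated at $-\theta(u)$ and to invoke the standard identity $\int_0^\infty e^{-sy}h_\alpha(y,t)\,\mathrm{d}y=E_{\alpha,1}(-st^\alpha)$. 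Finally, the case $\alpha=1$ is automatically consistent, since $Y_1(t)=t$ and $E_{1,1}(x)=e^{x}$ reduce (\ref{pgf}) to the GSP pgf (\ref{pgfgsp}).
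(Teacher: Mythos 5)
Your proposal is correct and follows essentially the same route as the paper: condition on $Y_{\alpha}(t)$, substitute the GSP pgf (\ref{pgfgsp}), and identify the resulting integral $\int_0^\infty e^{\theta(u)y}h_{\alpha}(y,t)\,\mathrm{d}y$ as $E_{\alpha,1}(\theta(u)t^{\alpha})$. The only difference is that the paper invokes this last identity (the Laplace transform of the inverse stable subordinator) directly in one line, whereas you re-derive it from the moments $\mathbb{E}\left(Y_{\alpha}(t)^{m}\right)=m!\,t^{m\alpha}/\Gamma(m\alpha+1)$ and discuss the interchange of sum and integral, which is a harmless and slightly more self-contained variant of the same step.
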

\begin{proof}
Using (\ref{pgfgsp}), we get
\begin{align*}
G^{\alpha}_{\mathcal{S}}(u,t)&=\int_{0}^{\infty}G_{\mathcal{S}}(u,x)h_{\alpha}(x,t)\mathrm{d}x\\
&=\int_{0}^{\infty}\exp\left\{-x\sum_{j=1}^{k}\left(\lambda_{j}(1-u^{j})+\mu_{j}(1-u^{-j})\right)\right\}h_{\alpha}(x,t)\mathrm{d}x\\
&=E_{\alpha,1}\left(\sum_{j=1}^{k}\left(\lambda_{j}(u^{j}-1)+\mu_{j}(u^{-j}-1)\right)t^{\alpha}\right).
\end{align*}
This completes the proof.
\end{proof}
The pgf of GFSP solves the following fractional differential equation:
\begin{equation*}
\partial_{t}^{\alpha}G^{\alpha}_{\mathcal{S}}(u,t)=\sum_{j=1}^{k}\left(\lambda_{j}(u^{j}-1)+\mu_{j}(u^{-j}-1)\right)G^{\alpha}_{\mathcal{S}}(u,t),\ \ G^{\alpha}_{\mathcal{S}}(u,0)=1
\end{equation*}
which is due to the fact that the Mittag-Leffler function is an eigenfunction of the Caputo fractional derivative.
\begin{remark}
On substituting	$\lambda_{j}=\lambda$ and $\mu_{j}=\mu$ for all $j=1,2,\dots,k$ in (\ref{pgf}), we get the pgf of fractional Skellam process of order $k$ (see Kataria and Khandakar (2021b), Proposition 2.2).
\end{remark}
Next we obtain the factorial moments of GFSP by using its pgf.
\begin{proposition}\label{p3.3}
The $r$th factorial moment of GFSP, that is, $\Psi^\alpha(r,t)=	\mathbb{E}(\mathcal{S}^{\alpha}(t)(\mathcal{S}^{\alpha}(t)-1)\cdots(\mathcal{S}^{\alpha}(t)-r+1))$, $r\ge1$, is given by
\begin{equation*}
\Psi^\alpha(r,t)=r!\sum_{n=1}^{r}\frac{t^{n\alpha}}{\Gamma(n\alpha+1)}\underset{m_i\in\mathbb{N}}{\underset{\sum_{i=1}^nm_i=r}{\sum}}\prod_{\ell=1}^{n}\left(\frac{1}{m_\ell!}\sum_{j=1}^{k}\left((j)_{m_\ell}\lambda_{j}+(-1)^{m_\ell}j^{(m_\ell)}\mu_{j}\right)\right),
\end{equation*}
where $(j)_{m_\ell}=j(j-1)\cdots(j-m_\ell+1)$ denotes the falling factorial and $j^{(m_\ell)}=j(j+1)\cdots(j+m_\ell-1)$ denotes the rising factorial.
\end{proposition}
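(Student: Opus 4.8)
The plan is to obtain the factorial moment by differentiating the probability generating function $r$ times and then setting $u=1$, since for any $\mathbb{Z}$-valued random variable one has $\Psi^\alpha(r,t)=\partial_{u}^{r}G^{\alpha}_{\mathcal{S}}(u,t)\big|_{u=1}$. The pgf in (\ref{pgf}) is the composition $E_{\alpha,1}(g(u))$ with the Laurent polynomial
\begin{equation*}
g(u)=t^{\alpha}\sum_{j=1}^{k}\left(\lambda_{j}(u^{j}-1)+\mu_{j}(u^{-j}-1)\right),
\end{equation*}
and since $E_{\alpha,1}$ is entire while $g$ is smooth near $u=1$, the composition is smooth there and repeated differentiation is legitimate. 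The crucial structural observation is that $g(1)=0$, so after differentiating only the derivatives of $E_{\alpha,1}$ \emph{at the origin} will survive upon evaluation.

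First I would record the two elementary ingredients. The inner derivatives at $u=1$ are, for $m\ge1$,
\begin{equation*}
g^{(m)}(1)=t^{\alpha}\sum_{j=1}^{k}\left((j)_{m}\lambda_{j}+(-1)^{m}j^{(m)}\mu_{j}\right),
\end{equation*}
obtained from $\frac{\mathrm{d}^{m}}{\mathrm{d}u^{m}}u^{j}\big|_{u=1}=(j)_{m}$ and $\frac{\mathrm{d}^{m}}{\mathrm{d}u^{m}}u^{-j}\big|_{u=1}=(-1)^{m}j^{(m)}$. The outer derivatives at zero follow from (\ref{2.2desww}): since $E_{\alpha,1}^{(n)}(x)=n!\,E_{\alpha,n\alpha+1}^{n+1}(x)$ and only the $j=0$ term of the three-parameter Mittag-Leffler series is nonzero at $x=0$, one finds $E_{\alpha,1}^{(n)}(0)=n!/\Gamma(n\alpha+1)$.

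The main step is to apply Fa\`a di Bruno's formula to $E_{\alpha,1}(g(u))$ in its partial Bell-polynomial form
\begin{equation*}
\partial_{u}^{r}E_{\alpha,1}(g(u))=\sum_{n=1}^{r}E_{\alpha,1}^{(n)}(g(u))\,B_{r,n}\!\left(g'(u),\dots,g^{(r-n+1)}(u)\right),
\end{equation*}
and then set $u=1$. Using $g(1)=0$ collapses the outer factor to $E_{\alpha,1}^{(n)}(0)=n!/\Gamma(n\alpha+1)$. Rewriting each Bell polynomial as a sum over ordered compositions through the identity $B_{r,n}(x_1,\dots)=\frac{r!}{n!}\sum_{m_1+\cdots+m_n=r,\,m_i\ge1}\prod_{\ell=1}^{n}\frac{x_{m_\ell}}{m_\ell!}$ cancels the $n!$ against the $n!$ in $E_{\alpha,1}^{(n)}(0)$, leaves the overall prefactor $r!$, and produces the factor $t^{n\alpha}$ from the $n$ copies of $t^{\alpha}$ carried by $g^{(m_\ell)}(1)$; substituting the expression for $g^{(m_\ell)}(1)$ then yields precisely the stated formula (here $m_i\in\mathbb{N}$ means $m_i\ge1$).

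The only delicate point is the combinatorial bookkeeping in passing from the unordered-partition (Bell-polynomial) form of Fa\`a di Bruno to the ordered-composition sum appearing in the statement, together with the matching of factorials: one must check that the multiplicity $n!/\prod_{i}j_i!$ relating a multiset of block sizes to its ordered arrangements is exactly what converts the standard Bell weights into the clean product $\prod_{\ell}\frac{1}{m_\ell!}(\cdots)$ with constant $r!$. Once this cancellation is verified, the remaining computation of $g^{(m)}(1)$ and $E_{\alpha,1}^{(n)}(0)$ is routine.
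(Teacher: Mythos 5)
Your proposal is correct and follows essentially the same route as the paper: both differentiate the pgf $r$ times via Fa\`a di Bruno's formula, exploit that the inner function vanishes at $u=1$ so that only $E_{\alpha,1}^{(n)}(0)=n!/\Gamma(n\alpha+1)$ (via (\ref{2.2desww})) survives, and reduce to a sum over ordered compositions $m_1+\dots+m_n=r$ with $m_i\ge1$. The only cosmetic difference is that you use the Bell-polynomial form of Fa\`a di Bruno and then pass to ordered compositions, whereas the paper uses Johnson's variant (his Eqs.\ (3.3) and (3.6)) which lands on the composition sum directly.
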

\begin{proof}
Let $\zeta(u)=\sum_{j=1}^{k}\left(\lambda_{j}(u^{j}-1)+\mu_{j}(u^{-j}-1)\right)$. On using the $r$th derivative of composition of two functions (see Johnson (2002), Eq. (3.3))	in (\ref{pgf}), we get
\begin{align}\label{mkgtrr4543t}
\Psi^\alpha(r,t)&=\frac{\partial^{r}G^{\alpha}_{\mathcal{S}}(u,t)}{\partial u^{r}}\bigg|_{u=1}\nonumber\\
&=\sum_{n=0}^{r}E^{n+1}_{\alpha,n\alpha+1}(t^{\alpha}\zeta(u))\sum_{m=0}^{n}\frac{n!}{m!(n-m)!}\left(-t^{\alpha}\zeta(u)\right)^{n-m}\frac{\mathrm{d}^{r}}{\mathrm{d}u^{^{r}}}\left(t^{\alpha}\zeta(u)\right)^{m}\bigg|_{u=1}\nonumber\\
&=\sum_{n=0}^{r}\frac{t^{n\alpha }}{\Gamma(n\alpha+1)}\frac{\mathrm{d}^{r}}{\mathrm{d}u^{^{r}}}\left(\zeta(u)\right)^{n}\bigg|_{u=1},
\end{align}
where we have used (\ref{2.2desww}) in the penultimate step. On using the following result (see Johnson (2002), Eq. (3.6))
\begin{equation*}
\frac{\mathrm{d}^{r}}{\mathrm{d}w^{^{r}}}(g(w))^{n}=\underset{m_i\in\mathbb{N}_0}{\underset{m_{1}+m_{2}+\dots+m_{n}=r}{\sum}}\frac{r!}{m_1!m_2!\cdots m_n!}g^{(m_{1})}(w)g^{(m_{2})}(w)\cdots g^{(m_{n})}(w),
\end{equation*}
we have	
\begin{align}\label{ccc}
\frac{\mathrm{d}^{r}}{\mathrm{d}u^{^{r}}}\left(\zeta(u)\right)^{n}\bigg|_{u=1}&=r!\underset{m_i\in\mathbb{N}_0}{\underset{\sum_{i=1}^nm_i=r}{\sum}}\prod_{\ell=1}^{n}\frac{1}{m_\ell!}\frac{\mathrm{d}^{m_{\ell}}}{\mathrm{d}u^{{m_{\ell}}}}\zeta(u)\bigg|_{u=1}\nonumber\\
&=r!\underset{m_i\in\mathbb{N}}{\underset{\sum_{i=1}^nm_i=r}{\sum}}\prod_{\ell=1}^{n}\left(\frac{1}{m_\ell!}\sum_{j=1}^{k}\left((j)_{m_\ell}\lambda_{j}+(-1)^{m_\ell}j^{(m_\ell)}\mu_{j}\right)\right).
\end{align}	
As the right hand side of (\ref{ccc}) vanishes for $n=0$, the proof follows by substituting (\ref{ccc}) in (\ref{mkgtrr4543t}).	
\end{proof}
The mean, variance and  covariance of GFSP are obtained by using Theorem 2.1 of Leonenko {\it et al.} (2014) as follows: Let $0<s\leq t$. Then,
\begin{align}
\mathbb{E}\left(\mathcal{S}^{\alpha}(t)\right)&=m_{1}\mathbb{E}\left(Y_{\alpha}(t)\right),\nonumber
\\
\operatorname{Var}\left(\mathcal{S}^{\alpha}(t)\right)&=m_{2}\mathbb{E}\left(Y_{\alpha}(t)\right)+m_{1}^{2}\operatorname{Var}\left(Y_{\alpha}(t)\right),\label{vs2}\\	
\operatorname{Cov}\left(\mathcal{S}^{\alpha}(s),\mathcal{S}^{\alpha}(t)\right)&=m_{2}\mathbb{E}\left(Y_{\alpha}(s)\right)+m_{1}^{2}\operatorname{Cov}\left(Y_{\alpha}(s),Y_{\alpha}(t)\right).\label{cs3}
\end{align}
The GFSP exhibits overdispersion as $\operatorname{Var}\left(\mathcal{S}^{\alpha}(t)\right)-\mathbb{E}\left(\mathcal{S}^{\alpha}(t)\right)>0$ for all $t>0$.
\begin{proposition}
The GFSP exhibits the LRD property.
\end{proposition}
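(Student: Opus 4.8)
The plan is to analyze the large-$t$ behaviour of the correlation function
\[
\operatorname{Corr}\left(\mathcal{S}^{\alpha}(s),\mathcal{S}^{\alpha}(t)\right)=\frac{\operatorname{Cov}\left(\mathcal{S}^{\alpha}(s),\mathcal{S}^{\alpha}(t)\right)}{\sqrt{\operatorname{Var}\left(\mathcal{S}^{\alpha}(s)\right)}\,\sqrt{\operatorname{Var}\left(\mathcal{S}^{\alpha}(t)\right)}}
\]
for fixed $s$, and to show that it decays like $t^{-\alpha}$, so that the exponent $\gamma=\alpha$ lies in $(0,1)$ and the LRD definition applies. The three inputs are the explicit covariance formula (\ref{cs3}), the explicit variance formula (\ref{vs2}), and the asymptotics (\ref{meani}), (\ref{xswe331}) and (\ref{asi1}) of the inverse stable subordinator.

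First I would treat the numerator. Substituting (\ref{meani}) and the large-$t$ estimate (\ref{asi1}) into (\ref{cs3}), the time-dependent term in (\ref{asi1}) carries the factor $t^{-(1-\alpha)}\to0$, so the covariance converges to the finite positive constant
\[
C(s)\coloneqq m_{2}\frac{s^{\alpha}}{\Gamma(\alpha+1)}+m_{1}^{2}\frac{\alpha s^{2\alpha}B(\alpha,\alpha+1)}{\Gamma^{2}(\alpha+1)}>0.
\]
Next I would treat the denominator. In (\ref{vs2}) the term $m_{1}^{2}\operatorname{Var}(Y_{\alpha}(t))$ grows like $t^{2\alpha}$ by (\ref{xswe331}), while $m_{2}\mathbb{E}(Y_{\alpha}(t))$ grows only like $t^{\alpha}$ by (\ref{meani}); hence, provided $m_{1}\neq0$, the variance is dominated by its $t^{2\alpha}$ term and
\[
\sqrt{\operatorname{Var}\left(\mathcal{S}^{\alpha}(t)\right)}\sim |m_{1}|\sqrt{\frac{2}{\Gamma(2\alpha+1)}-\frac{1}{\Gamma^{2}(\alpha+1)}}\;t^{\alpha},
\]
the coefficient under the root being positive since it equals $\operatorname{Var}(Y_{\alpha}(t))/t^{2\alpha}$.

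Combining the two estimates, and noting that $\sqrt{\operatorname{Var}(\mathcal{S}^{\alpha}(s))}$ is a fixed positive constant, gives
\[
\operatorname{Corr}\left(\mathcal{S}^{\alpha}(s),\mathcal{S}^{\alpha}(t)\right)\sim c(s)\,t^{-\alpha},\qquad t\to\infty,
\]
with $c(s)=C(s)\big/\bigl(\sqrt{\operatorname{Var}(\mathcal{S}^{\alpha}(s))}\,|m_{1}|\sqrt{2/\Gamma(2\alpha+1)-1/\Gamma^{2}(\alpha+1)}\bigr)>0$. Since $\gamma=\alpha\in(0,1)$, the GFSP has the LRD property. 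The only delicate point is the degenerate case $m_{1}=\sum_{j=1}^{k}j(\lambda_{j}-\mu_{j})=0$, in which the leading $t^{2\alpha}$ term of the variance vanishes and instead $\operatorname{Var}(\mathcal{S}^{\alpha}(t))\sim m_{2}t^{\alpha}/\Gamma(\alpha+1)$; the same computation then yields $\operatorname{Corr}\sim c(s)t^{-\alpha/2}$ with $\gamma=\alpha/2\in(0,1)$, so the LRD property persists. I expect this case distinction, rather than any single estimate, to be the main thing to get right.
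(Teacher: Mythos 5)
Your proof is correct and follows essentially the same route as the paper: substitute (\ref{meani}), (\ref{xswe331}) and (\ref{asi1}) into the covariance (\ref{cs3}) and variance (\ref{vs2}), observe that the covariance tends to a positive constant while $\sqrt{\operatorname{Var}(\mathcal{S}^{\alpha}(t))}$ grows like $t^{\alpha}$, and conclude $\operatorname{Corr}(\mathcal{S}^{\alpha}(s),\mathcal{S}^{\alpha}(t))\sim c(s)t^{-\alpha}$. Your explicit treatment of the degenerate case $m_{1}=0$ (where $\lambda_{j}=\mu_{j}$ for all $j$ is a concrete instance) is actually more careful than the paper, whose constant $c_{0}(s)$ has the factor $\sqrt{2m_{1}^{2}/\Gamma(2\alpha+1)-m_{1}^{2}/\Gamma^{2}(\alpha+1)}$ in its denominator and is therefore undefined there; your observation that the exponent then becomes $\alpha/2\in(0,1)$, preserving LRD, closes that small gap.
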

\begin{proof}
From (\ref{vs2}) and (\ref{cs3}), we get
\begin{equation*}
\operatorname{Corr}	\left(\mathcal{S}^{\alpha}(s),\mathcal{S}^{\alpha}(t)\right)=\frac{m_{2}\mathbb{E}\left(Y_{\alpha}(s)\right)+m_{1}^{2}\operatorname{Cov}\left(Y_{\alpha}(s),Y_{\alpha}(t)\right)}{\sqrt{\operatorname{Var}\left(\mathcal{S}^{\alpha}(s)\right)}\sqrt{m_{2}\mathbb{E}\left(Y_{\alpha}(t)\right)+m_{1}^{2}\operatorname{Var}\left(Y_{\alpha}(t)\right)}}.
\end{equation*}
On using (\ref{meani}), (\ref{xswe331}) and (\ref{asi1}) for fixed $s$ and large $t$, we get
\begin{align*}
\operatorname{Corr}	\left(\mathcal{S}^{\alpha}(s),\mathcal{S}^{\alpha}(t)\right)&\sim \frac{m_{2}\Gamma^2(\alpha+1)\mathbb{E}\left(Y_{\alpha}(s)\right)+m_{1}^{2}\left( \alpha s^{2\alpha}B(\alpha,\alpha+1)-\frac{\alpha^{2}s^{\alpha+1}}{(\alpha+1)t^{1-\alpha}}\right)}{\Gamma^2(\alpha+1)\sqrt{\operatorname{Var}\left(\mathcal{S}^{\alpha}(s)\right)}\sqrt{\frac{m_{2}t^{\alpha}}{\Gamma(\alpha+1)}+\frac{2m_{1}^{2}t^{2\alpha}}{\Gamma(2\alpha+1)}-\frac{m_{1}^{2}t^{2\alpha}}{\Gamma^{2}(\alpha+1)}}}\\
&\sim c_{0}(s)t^{-\alpha},
\end{align*}
where
\begin{equation*}
c_{0}(s)=	\frac{m_{2}\Gamma^2(\alpha+1)\mathbb{E}\left(Y_{\alpha}(s)\right)+m_{1}^{2}\alpha s^{2\alpha}B(\alpha,\alpha+1)}{\Gamma^2(\alpha+1)\sqrt{\operatorname{Var}\left(\mathcal{S}^{\alpha}(s)\right)}\sqrt{\frac{2m_{1}^{2}}{\Gamma(2\alpha+1)}-\frac{m_{1}^{2}}{\Gamma^{2}(\alpha+1)}}}.
\end{equation*}
As $0<\alpha<1$, it follows that the GFSP has the LRD property.
\end{proof}
\begin{remark}
For a fixed $h>0$, the increment of GFSP is defined as
\begin{equation*}
Z_{h}^{\alpha}(t)\coloneqq\mathcal{S}^{\alpha}(t+h)-\mathcal{S}^{\alpha}(t).
\end{equation*}
It can be shown that the increment process $\{Z_{h}^{\alpha}(t)\}_{t\ge0}$ exhibits the SRD property. The proof follows similar lines to that of Theorem 1 of Maheshwari and Vellaisamy (2016).
\end{remark}
\begin{proposition}
The one-dimensional distributions of GFSP are not infinitely divisible.
\end{proposition}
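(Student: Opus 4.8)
The plan is to use the characterisation that a $\mathbb{Z}$-valued law is infinitely divisible precisely when it is compound Poisson: its characteristic function must have the form $\exp(\mathrm{i}b\xi+\sum_{j\neq0}\nu_{j}(e^{\mathrm{i}j\xi}-1))$ with $b\in\mathbb{R}$ and Lévy masses $\nu_{j}\ge0$, the Lévy measure being automatically finite on the lattice $\mathbb{Z}$. Writing $u=e^{\mathrm{i}\xi}$ and $\zeta(u)=\sum_{j=1}^{k}(\lambda_{j}(u^{j}-1)+\mu_{j}(u^{-j}-1))$ as in the proof of Proposition \ref{p3.3}, and using the pgf (\ref{pgf}), infinite divisibility of $\mathcal{S}^{\alpha}(t)$ is equivalent to requiring that every Laurent coefficient of $\log E_{\alpha,1}(t^{\alpha}\zeta(u))$ at a power $u^{n}$ with $n\neq0$ be non-negative. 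The whole statement thus reduces to producing, for each $t>0$ and $0<\alpha<1$, one index $n\neq0$ whose coefficient is strictly negative. I would first note that this logarithm is unambiguous: for real $\xi$ one has $\operatorname{Re}\zeta(e^{\mathrm{i}\xi})=\sum_{j=1}^{k}(\lambda_{j}+\mu_{j})(\cos j\xi-1)\le0$, so $t^{\alpha}\zeta(e^{\mathrm{i}\xi})$ stays in the closed left half-plane, while for $0<\alpha<1$ the zeros of $E_{\alpha,1}$ avoid the closed left half-plane. Hence $G^{\alpha}_{\mathcal{S}}(e^{\mathrm{i}\xi},t)$ never vanishes, and, unlike in the stable case, non-vanishing of the characteristic function is \emph{not} the obstruction; the obstruction must come from the signs of the coefficients.

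For contrast, at $\alpha=1$ one has $\log G_{\mathcal{S}}(u,t)=t\zeta(u)$, a Laurent polynomial whose coefficients are the non-negative numbers $t\lambda_{j}$ at $u^{j}$ and $t\mu_{j}$ at $u^{-j}$ for $1\le j\le k$ and zero otherwise, reflecting that the GSP is a Lévy process. For $0<\alpha<1$ the obstruction is created by the nonlinearity of $E_{\alpha,1}$. Expanding $\log E_{\alpha,1}(w)=w/\Gamma(\alpha+1)+c_{2}w^{2}+\cdots$, the quadratic coefficient $c_{2}=1/\Gamma(2\alpha+1)-1/(2\Gamma^{2}(\alpha+1))=\tfrac12\operatorname{Var}(Y_{\alpha}(1))$ is strictly positive by (\ref{xswe331}), and the term $c_{2}t^{2\alpha}\zeta(u)^{2}$ feeds the strictly negative constant $-(\Lambda+\bar{\Lambda})$ of $\zeta$ into the coefficients at $u^{\pm1},\dots,u^{\pm k}$ through cross products, whereas at $\alpha=1$ all such contributions vanished. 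The direct route is to isolate one such coefficient and show its net sign is negative; for instance when $k=1$ the coefficient of $u$ already picks up the negative contribution $-2c_{2}t^{2\alpha}\lambda_{1}(\Lambda+\bar{\Lambda})$ from the quadratic term.

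The hard part is that $\log E_{\alpha,1}(t^{\alpha}\zeta(u))$ is not a finite Laurent series, so each candidate coefficient is an infinite series in $t^{\alpha}$ whose sign is not settled by its leading terms alone. To control this uniformly in $t$ I would pass to complex $\xi$: the map $\xi\mapsto G^{\alpha}_{\mathcal{S}}(e^{\mathrm{i}\xi},t)$ is $2\pi$-periodic and real-analytic, and for $0<\alpha<1$ it acquires complex zeros, since choosing a zero $z^{*}$ of $E_{\alpha,1}$ and solving $t^{\alpha}\zeta(e^{\mathrm{i}\xi})=z^{*}$ reduces, after the substitution $v=e^{\mathrm{i}\xi}$, to a degree-$2k$ polynomial equation in $v$, which always has roots. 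The symmetric Fourier coefficients $\nu_{n}+\nu_{-n}$, read off from $\log\lvert G^{\alpha}_{\mathcal{S}}(e^{\mathrm{i}\xi},t)\rvert^{2}$, then decay geometrically at the rate fixed by the zero nearest the real axis, with an oscillating sign whenever that zero lies off the imaginary axis. This would force $\nu_{n}+\nu_{-n}<0$ for infinitely many $n$, contradicting infinite divisibility for every $t>0$.

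The main obstacle I anticipate is precisely verifying that the dominant complex zero is genuinely off the imaginary axis, so that the sign oscillation is guaranteed, and excluding accidental cancellations in the competing contributions. The contrast with $\alpha=1$, where $e^{t\zeta}$ has no complex zeros at all and the GSP is infinitely divisible, is exactly what makes this mechanism plausible and pinpoints the fractional regime as the source of non-infinite-divisibility.
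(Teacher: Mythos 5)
Your strategy is legitimate in principle: an infinitely divisible law supported on $\mathbb{Z}$ is necessarily a shifted compound Poisson, so it would indeed suffice to exhibit one strictly negative Laurent coefficient of $\log G^{\alpha}_{\mathcal{S}}(u,t)$ at a nonzero power of $u$. But as written the argument never closes, and the gap is exactly where you locate it. The computation you actually carry out controls only the first two terms of the expansion in $t^{\alpha}$: at $u^{1}$ (for $k=1$) these give $t^{\alpha}\lambda_{1}/\Gamma(\alpha+1)-2c_{2}t^{2\alpha}\lambda_{1}(\Lambda+\bar{\Lambda})$, which is \emph{positive} for small $t$ and is in any case swamped by the uncontrolled $O(t^{3\alpha})$ tail; so no coefficient is shown to be negative for any $t$, let alone for all $t>0$. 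The fallback via complex zeros also does not yet constitute a proof: a zero of the analytic continuation of the pgf off the unit circle is not by itself incompatible with the compound Poisson form (such a pgf need not extend past the unit circle at all, and when it does it may vanish there), so you would need a rigorous Darboux-type asymptotic for the Fourier coefficients of $\log\lvert G^{\alpha}_{\mathcal{S}}(e^{i\xi},t)\rvert^{2}$ governed by the singularity nearest the real axis, \emph{plus} a proof that this dominant singularity lies off the imaginary axis so that the sign genuinely oscillates. You flag precisely this as the unresolved obstacle, and without it the proposition is not established.

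For comparison, the paper proves the statement by a soft limiting argument that bypasses all coefficient estimates. Self-similarity of the inverse stable subordinator gives $\mathcal{S}^{\alpha}(t)\overset{d}{=}\mathcal{S}\left(t^{\alpha}Y_{\alpha}(1)\right)$, and the law of large numbers (\ref{limitS}) for the GSP yields $\mathcal{S}^{\alpha}(t)/t^{\alpha}\to Y_{\alpha}(1)\sum_{j=1}^{k}j(\lambda_{j}-\mu_{j})$ in distribution as $t\to\infty$. If $\mathcal{S}^{\alpha}(t)$ were infinitely divisible, so would be each scaled variable and hence the distributional limit (Steutel and van Harn (2004), Propositions 2.1 and 2.2), forcing $Y_{\alpha}(1)$ to be infinitely divisible — contradicting Vellaisamy and Kumar (2018). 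This is both shorter and complete (granting $\sum_{j}j(\lambda_{j}-\mu_{j})\neq0$, which the limiting argument implicitly needs and which your Fourier-analytic route, if ever completed, would not). To salvage your approach you would have to supply the missing sign estimates; as it stands it is a plausible mechanism, not a proof.
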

\begin{proof}
Using the self-similarity property of	$\{Y_{\alpha}(t)\}_{t\ge0}$,  we get $\mathcal{S}^{\alpha}(t)\overset{d}{=}\mathcal{S}\left(t^{\alpha}Y_{\alpha}(1)\right)$. Thus,
\begin{align*}
\lim_{t\to\infty}\frac{\mathcal{S}^{\alpha}(t)}{t^{\alpha}}&\overset{d}{=}\lim_{t\to\infty}\frac{\mathcal{S}\left(t^{\alpha}Y_{\alpha}(1)\right)}{t^{\alpha}}\\
&=Y_{\alpha}(1)\lim_{t\to\infty}\frac{\mathcal{S}\left(t^{\alpha}Y_{\alpha}(1)\right)}{t^{\alpha}Y_{\alpha}(1)}\\
&\overset{d}{=}Y_{\alpha}(1)\sum_{j=1}^{k}j(\lambda_{j}-\mu_{j}),
\end{align*}
where we have used $(\ref{limitS})$.

Suppose on contrary $\mathcal{S}^{\alpha}(t)$ is infinitely divisible. Then, by using Proposition 2.1 of Steutel and van Harn (2004), it follows that $\mathcal{S}^{\alpha}(t)/t^{\alpha}$ is  infinitely divisible. It is known that the limit of a sequence of infinitely divisible random variables is infinitely divisible (see Steutel and van Harn (2004), Proposition 2.2). This implies that  $Y_{\alpha}(1)$ is infinitely divisible. This leads to a contradiction as $Y_{\alpha}(1)$ is not infinitely divisible (see Vellaisamy and Kumar (2018)).
\end{proof}

\section{GFCP time-changed by a L\'evy subordinator}
In this section, we consider a time-changed version of the GFCP. We call it the time-changed  generalized fractional counting process-I (TCGFCP-I) and denote it by $\{\mathcal{Z}_{f}^{\alpha}(t)\}_{t\ge0}$, $0<\alpha\le1$.

Let $\{D_f (t)\}_{t\ge0}$ be a L\'evy subordinator such that $\mathbb{E}\left(D^r_f(t)\right)<\infty$ for all $r>0$. We define the TCGFCP-I as 
\begin{equation}\label{qws11ww1}
\mathcal{Z}_{f}^{\alpha}(t)\coloneqq M^{\alpha}(D_f (t)),
\end{equation}
where the GFCP $\{M^{\alpha}(t)\}_{t\ge0}$ is independent of $\{D_f (t)\}_{t\ge0}$. 

For $\alpha=1$, the TCGFCP-I reduces to  a time-changed version of the GCP, namely, the time-changed  generalized  counting process-I (TCGCP-I) $\{\mathcal{Z}_{f}(t)\}_{t\ge0}$, that is, 
\begin{equation}\label{zft}
\mathcal{Z}_{f}(t)\coloneqq M(D_f (t)).
\end{equation}
For $k=1$, the TCGFCP-I reduces to TCFPP-I, a time-changed version of the TFPP which is introduced and studied by Maheshwari and Vellaisamy (2019). Also, for $k=1$, the TCGCP-I reduces to a time-changed version of the Poisson process which is introduced and studied by Orsingher and Toaldo (2015) with the condition that the involved subordinator has finite moments of all order. 
 
On taking $\lambda_{j}=\lambda$ for all $j=1,2,\dots,k$, the GFCP and GCP reduces to the time fractional Poisson process of order $k$ (TFPPoK) and Poisson process of order $k$ (PPoK) (see Kataria and Khandakar (2021c)), respectively. For such choice of $\lambda_{j}$'s, the TCGFCP-I and TCGCP-I reduces to a time-changed version of the TFPPoK and PPoK (see Sengar {\it et al.} (2020)), respectively. For $\lambda_{j}=\lambda(1-\rho)\rho^{j-1}/(1-\rho^{k})$, $0\leq\rho<1$, $j=1,2,\dots,k$, the GFCP and GCP reduces to the fractional P\'olya-Aeppli process of order $k$ (FPAPoK) and P\'olya-Aeppli process of order $k$ (PAPoK) (see Kataria and Khandakar (2021c)), respectively. In this case, the TCGFCP-I and TCGCP-I reduces to a time-changed version of the FPAPoK and PAPoK, respectively. 

For $\lambda_{j}=\lambda(1-\rho)\rho^{j-1}$, $0\leq\rho<1$, for all $j\ge1$ with $k\to \infty$, the GFCP and GCP reduces to the fractional P\'olya-Aeppli process (FPAP) and P\'olya-Aeppli process (PAP) (see Kataria and Khandakar (2021c)), respectively. Thus, the TCGFCP-I and TCGCP-I reduces to a time-changed version of the FPAP and PAP, respectively. Also, when $\lambda_{j}=\beta_{j-1}-\beta_{j}$, $j\ge 1$ where the sequence $\{\beta_{j}\}_{j\in\mathbb{Z}}$ is such that $\beta_{j}=0$ for all $j<0$ and $\beta_{j}>\beta_{j+1}>0$ for all $j\geq0$ with $\lim\limits_{j\to\infty}\beta_{j+1}/\beta_{j}<1$, the GFCP and GCP reduces to the convoluted fractional Poisson process and convoluted Poisson process (see Kataria and Khandakar (2021a)), respectively, as $k\to \infty$. Thus, in this case, the TCGFCP-I and TCGCP-I reduces to a time-changed version of the convoluted fractional Poisson process and convoluted Poisson process, respectively.

\begin{theorem}\label{thm1}
The pmf $p_{f}(n,t)=\mathrm{Pr}\{\mathcal{Z}_{f}(t)=n\}$ of TCGCP-I is given by
\begin{equation}\label{fd12dfqq}
p_{f}(n,t)=\sum_{\Omega(k,n)}\prod_{j=1}^{k}\frac{\lambda_{j}^{x_{j}}}{x_{j}!}\mathbb{E}\left(e^{-\Lambda D_{f}(t)}D_{f}^{s_{k}}(t)\right),\ \ n\geq0,
\end{equation}
where $s_{k}=x_{1}+x_{2}+\dots+x_{k}$ and $\Omega(k,n)$ is as given in (\ref{p(n,t)}).
\end{theorem}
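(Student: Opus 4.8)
The plan is to compute the pmf by conditioning on the value of the subordinator $D_f(t)$ and then invoking the known state probabilities of the GCP recorded in (\ref{p(n,t)}). Since the GCP $\{M(t)\}_{t\ge0}$ is independent of the subordinator $\{D_f(t)\}_{t\ge0}$, the law of total probability gives
\begin{equation*}
p_f(n,t)=\int_0^\infty p(n,x)\,\mathrm{Pr}\{D_f(t)\in\mathrm{d}x\},
\end{equation*}
where $p(n,x)=\mathrm{Pr}\{M(x)=n\}$ is the GCP pmf. This conditioning is the natural opening move for any subordinated process, and it reduces the problem to a moment computation for $D_f(t)$.

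Next I would substitute the explicit form (\ref{p(n,t)}) of $p(n,x)$ into the integral. Writing $\prod_{j=1}^{k}(\lambda_j x)^{x_j}=\big(\prod_{j=1}^{k}\lambda_j^{x_j}\big)x^{s_k}$ with $s_k=x_1+x_2+\dots+x_k$, the integrand factors so that the quantities depending on the summation index from $\Omega(k,n)$ separate cleanly from those depending on $x$. Because $\Omega(k,n)$ is a \emph{finite} index set for each fixed $n$ (the constraint $x_1+2x_2+\dots+kx_k=n$ admits only finitely many solutions in $\mathbb{N}_0^k$), I can interchange the sum with the integral without any convergence subtlety, obtaining
\begin{equation*}
p_f(n,t)=\sum_{\Omega(k,n)}\prod_{j=1}^{k}\frac{\lambda_j^{x_j}}{x_j!}\int_0^\infty x^{s_k}e^{-\Lambda x}\,\mathrm{Pr}\{D_f(t)\in\mathrm{d}x\}.
\end{equation*}
Recognizing the remaining integral as the expectation $\mathbb{E}\big(e^{-\Lambda D_f(t)}D_f^{s_k}(t)\big)$ then yields the claimed formula (\ref{fd12dfqq}).

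The only point requiring care — and the main obstacle, though a mild one — is the finiteness of $\mathbb{E}\big(e^{-\Lambda D_f(t)}D_f^{s_k}(t)\big)$, which must be checked to ensure each term on the right is well defined and to fully legitimize the separation of sum and integral. Here I would invoke the standing hypothesis $\mathbb{E}(D_f^r(t))<\infty$ for all $r>0$, together with the elementary bound $e^{-\Lambda D_f(t)}\le 1$ (valid since $D_f(t)\ge0$ and $\Lambda>0$), which gives $\mathbb{E}\big(e^{-\Lambda D_f(t)}D_f^{s_k}(t)\big)\le \mathbb{E}(D_f^{s_k}(t))<\infty$. With every term finite and the outer sum finite, the interchange is immediate and the computation is complete.
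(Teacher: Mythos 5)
Your proposal is correct and follows essentially the same route as the paper: condition on $D_f(t)$, substitute the explicit GCP pmf (\ref{p(n,t)}), separate the factor $x^{s_k}e^{-\Lambda x}$, and interchange the finite sum over $\Omega(k,n)$ with the integral to recognize the expectation $\mathbb{E}\bigl(e^{-\Lambda D_f(t)}D_f^{s_k}(t)\bigr)$. Your added remark on finiteness via $\mathbb{E}(D_f^r(t))<\infty$ and $e^{-\Lambda D_f(t)}\le 1$ is a harmless (and welcome) extra justification that the paper leaves implicit.
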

\begin{proof}
Let $h_{f}(x,t)$ be the pdf of $D_f (t)$, and recall that $p(n,x)$ denotes the pmf of GCP. From (\ref{zft}), we get
\begin{align}\label{sqaa7}
p_{f}(n,t)&=\int_0^\infty p(n,x)h_{f}(x,t)\,\mathrm{d}x\nonumber\\
&=\int_0^\infty\sum_{\Omega(k,n)}\prod_{j=1}^{k}\frac{(\lambda_{j}x)^{x_{j}}}{x_{j}!}e^{-\Lambda x}h_{f}(x,t)\,\mathrm{d}x, \ \ \  (\text{using (\ref{p(n,t)})})\\
&=\sum_{\Omega(k,n)}\prod_{j=1}^{k}\frac{\lambda_{j}^{x_{j}}}{x_{j}!}\mathbb{E}\left(e^{-\Lambda D_{f}(t)}D_{f}^{s_{k}}(t)\right).\nonumber
\end{align}
This completes the proof.
\end{proof}
Note that
\begin{align*}
\sum_{n=0}^{\infty}p_{f}(n,t)&=\sum_{n=0}^{\infty}\sum_{s_k=0}^n\mathbb{E}\left(e^{-\Lambda D_{f}(t)}D_{f}^{s_{k}}(t)\right)\underset{x_{1}+2x_{2}+\dots+kx_{k}=n}{\sum_{ x_{1}+x_{2}+\dots+x_{k}=s_k}}\prod_{j=1}^{k}\frac{\lambda_{j}^{x_{j}}}{x_{j}!}\\
&=\sum_{s_k=0}^{\infty}\mathbb{E}\left(e^{-\Lambda D_{f}(t)}D_{f}^{s_{k}}(t)\right)\sum_{n=s_k}^\infty\underset{x_{1}+2x_{2}+\dots+kx_{k}=n}{\sum_{ x_{1}+x_{2}+\dots+x_{k}=s_k}}\prod_{j=1}^{k}\frac{\lambda_{j}^{x_{j}}}{x_{j}!}\\
&=\sum_{s_k=0}^{\infty}\mathbb{E}\left(e^{-\Lambda D_{f}(t)}D_{f}^{s_{k}}(t)\right)\sum_{ x_{1}+x_{2}+\dots+x_{k}=s_k}\prod_{j=1}^{k}\frac{\lambda_{j}^{x_{j}}}{x_{j}!}\\
&=\sum_{s_k=0}^{\infty}\frac{\Lambda^{s_k}}{s_k!}\mathbb{E}\left(e^{-\Lambda D_{f}(t)}D_{f}^{s_k}(t)\right),\ \ \text{(using\ multinomial\ theorem)}\\
&=\int_{0}^{\infty}h_{f}(x,t)e^{-\Lambda x}\sum_{s_k=0}^{\infty}\frac{(\Lambda x)^{s_k}}{s_k!}\mathrm{d}x=\int_{0}^{\infty}h_{f}(x,t)\mathrm{d}x=1.
\end{align*}
Thus, $p_{f}(n,t)$ is indeed a pmf.
\begin{remark}
On substituting $\lambda_{j}=\lambda$, $j=1,2,\dots,k$ in (\ref{fd12dfqq}), we get
\begin{equation*}
p_{f}(n,t)\Big|_{\lambda_{j}=\lambda}=\sum_{ \Omega(k,n)}\frac{\lambda^{s_{k}}}{x_{1}!x_{2}!\dots x_{k}!}\mathbb{E}\left(e^{-k\lambda D_{f}(t)}D_{f}^{s_{k}}(t)\right),\ \ n\geq0,
\end{equation*}
which agrees with the pmf of a time-changed PPoK (see Sengar {\it et al.} (2020), Eq. (7)).
\end{remark}
\begin{remark}
From (\ref{sqaa7}), the pmf of TCGCP-I can alternatively be expressed as
\begin{align*}
p_{f}(n,t)&=\sum_{ \Omega(k,n)}\prod_{j=1}^{k}\frac{\lambda_{j}^{x_{j}}}{x_{j}!}\frac{(-1)^{s_{k}}}{\Lambda^{s_{k}}}\frac{\mathrm{d}^{s_{k}}}{\mathrm{d}v^{s_{k}}}\int_0^\infty e^{-\Lambda xv}h_{f}(x,t)\,\mathrm{d}x\Big|_{v=1}\\
&=\sum_{ \Omega(k,n)}\prod_{j=1}^{k}\frac{\lambda_{j}^{x_{j}}}{x_{j}!}\frac{(-1)^{s_{k}}}{\Lambda^{s_{k}}}\frac{\mathrm{d}^{s_{k}}}{\mathrm{d}v^{s_{k}}}e^{-tf(\Lambda v)}\Big|_{v=1}.
\end{align*}
For $k=1$, the above expression reduces to the pmf of a time-changed Poisson process (see Orsingher and Toaldo (2015), Eq. (2.4)).
\end{remark}

Next, we obtain the pgf $G_{f}(u,t)=\mathbb{E}\left(u^{\mathcal{Z}_{f}(t)}\right)$ of TCGCP-I.
\begin{proposition}
The pgf of TCGCP-I is given by
\begin{equation}\label{pgff}
G_{f}(u,t)=\exp\left\{-tf\left(\sum_{j=1}^{k}\lambda_{j}(1-u^{j})\right)\right\},\ \ |u|\le1.
\end{equation}
\end{proposition}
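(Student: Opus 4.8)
The plan is to compute the probability generating function by conditioning on the value of the subordinator $D_f(t)$ and integrating against its density, exactly as in the proofs of Propositions~3.2 and the pgf computation for the GSP. Since the GCP $\{M(t)\}_{t\ge0}$ is independent of $\{D_f(t)\}_{t\ge0}$, I would first write
\begin{equation*}
G_{f}(u,t)=\mathbb{E}\left(u^{M(D_f(t))}\right)=\int_{0}^{\infty}\mathbb{E}\left(u^{M(x)}\right)h_{f}(x,t)\,\mathrm{d}x=\int_{0}^{\infty}G(u,x)\,h_{f}(x,t)\,\mathrm{d}x,
\end{equation*}
where $h_{f}(\cdot,t)$ is the pdf of $D_f(t)$ and $G(u,x)=\mathbb{E}(u^{M(x)})$ is the pgf of the GCP.

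Next I would substitute the known pgf of the GCP from (\ref{pgfmt}), namely $G(u,x)=\exp\left(-x\sum_{j=1}^{k}\lambda_{j}(1-u^{j})\right)$, to obtain
\begin{equation*}
G_{f}(u,t)=\int_{0}^{\infty}\exp\left\{-x\sum_{j=1}^{k}\lambda_{j}(1-u^{j})\right\}h_{f}(x,t)\,\mathrm{d}x
=\mathbb{E}\left(\exp\left\{-D_f(t)\sum_{j=1}^{k}\lambda_{j}(1-u^{j})\right\}\right).
\end{equation*}
The inner exponent is simply $-sD_f(t)$ with $s=\sum_{j=1}^{k}\lambda_{j}(1-u^{j})$, so this integral is recognized as the Laplace transform of the subordinator evaluated at that argument.

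Finally I would invoke the defining Laplace transform of the L\'evy subordinator, $\mathbb{E}\left(e^{-sD_f(t)}\right)=e^{-tf(s)}$, with $s=\sum_{j=1}^{k}\lambda_{j}(1-u^{j})$, to conclude
\begin{equation*}
G_{f}(u,t)=\exp\left\{-t f\left(\sum_{j=1}^{k}\lambda_{j}(1-u^{j})\right)\right\},
\end{equation*}
which is the claimed formula (\ref{pgff}). The only point requiring a moment's care is justifying that $s=\sum_{j=1}^{k}\lambda_{j}(1-u^{j})\ge 0$ for $|u|\le 1$ so that the Bern\v stein function $f$ is being evaluated on its natural domain $s>0$; this is immediate since each $1-u^{j}\ge 0$ for real $u\in[-1,1]$ and the $\lambda_j$ are positive, and for complex $u$ on the unit circle one appeals to analytic continuation of both sides. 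I do not anticipate any genuine obstacle here—the computation is a direct conditioning argument—so the main thing is to state the interchange of expectation and the independence assumption cleanly.
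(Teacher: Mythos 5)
Your proposal is correct and follows essentially the same route as the paper: conditioning on $D_f(t)$, substituting the GCP pgf from (\ref{pgfmt}), and recognizing the resulting integral as the Laplace transform $\mathbb{E}\left(e^{-sD_f(t)}\right)=e^{-tf(s)}$ at $s=\sum_{j=1}^{k}\lambda_{j}(1-u^{j})$. The additional remark about $s\ge 0$ for $|u|\le 1$ is a harmless refinement the paper leaves implicit.
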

\begin{proof}
Using (\ref{pgfmt}), we get
\begin{align*}
G_{f}(u,t)&=\int_{0}^{\infty}G(u,x)h_{f}(x,t)\mathrm{d}x\\
&=\int_{0}^{\infty}\exp\left(-\sum_{j=1}^{k}\lambda_{j}(1-u^{j})x\right)h_{f}(x,t)\mathrm{d}x\\
&=\exp\left\{-tf\left(\sum_{j=1}^{k}\lambda_{j}(1-u^{j})\right)\right\}.
\end{align*}
This completes the proof.
\end{proof}
The pgf of TCGCP-I satisfies the following differential equation:
\begin{equation*}
\frac{\mathrm{d}}{\mathrm{d}t}G_{f}(u,t)=-f\left(\sum_{j=1}^{k}\lambda_{j}(1-u^{j})\right)G_{f}(u,t),\ \ G_{f}(u,0)=1.
\end{equation*}
\begin{remark}
On putting $k=1$ in (\ref{pgff}), we get the pgf of a time-changed Poisson process (see Orsingher and Toaldo (2015), Eq. (2.2)).
\end{remark}
\begin{remark}
We note that the TCGCP-I is equal in distribution to $\{\mathcal{X}_{f}(t)\}_{t\ge0}$ where $\mathcal{X}_{f}(t)=\sum_{j=1}^{k}jN_{j}(D_{f}(t))$,  a time-changed process introduced by Zuo {\it et al.} (2021). Here, for each $1\leq j\leq k$, $\{N_{j}(t)\}_{t\ge0}$ is a Poisson process with intensity $\lambda_{j}$ which is independent of the L\'evy subordinator
 $\{D_{f}(t)\}_{t\ge0}$. This holds due to the fact that $M(t)\stackrel{d}{=}\sum_{j=1}^{k}jN_{j}(t)$ (see Kataria and Khandakar (2021c)). 

The distribution of jumps of the process $\{\mathcal{X}_{f}(t)\}_{t\ge0}$ is given by Zuo {\it et al.} (2021), Eq. (4.2) as follows:
\begin{equation}\label{bgd43}
\mathrm{Pr}\{\mathcal{X}_{f}(h)=n\}=\begin{cases*}
1-hf(\Lambda)+o(h),\ \ n=0,\\
\displaystyle-h\sum_{ \Omega(k,n)}f^{(s_{k})}(\Lambda)\prod_{j=1}^{k}\frac{(-\lambda_{j})^{x_{j}}}{x_{j}!}+o(h),\ \ n\ge1.
\end{cases*}
\end{equation}
\end{remark}
Next, we obtain a version of the law of iterated logarithm for TCGFCP-I. 
\begin{theorem}
Let $f$ be a Bern\v stein function  associated with L\'evy subordinator $\{D_{f}(t)\}_{t\ge0}$ such that $\lim_{x\rightarrow 0+}f(\lambda x)/f(x)=\lambda^\theta$, $\lambda>0$ which implies that it is regularly varying at 0+ with index $0<\theta<1$. Also, let
\begin{equation*}
g(t)=\frac{\log\log t}{\phi(t^{-1}\log\log t)},\ \ t>e,
\end{equation*}
where $\phi$ is the inverse of $f$. Then, 
\begin{equation}\label{lil}
\liminf_{t\rightarrow\infty}\frac{\mathcal{Z}_{f}^{\alpha}(t)}{(g(t))^{\alpha}}\stackrel{d}{=}\sum_{j=1}^{k}j\lambda_{j} Y_{\alpha}(1)\theta^\alpha\left(1-\theta\right)^{\alpha(1-\theta)/\theta}.
\end{equation}	
\end{theorem}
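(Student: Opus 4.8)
The plan is to reduce the statement to three ingredients: the scaling law of the generalized counting process $M$, a liminf law of the iterated logarithm for the outer subordinator $D_f$, and the self-similarity of the inverse stable subordinator $Y_\alpha$. First I would invoke the subordination identity \eqref{ed}, which together with \eqref{qws11ww1} gives
\begin{equation*}
\mathcal{Z}_f^\alpha(t)=M^\alpha(D_f(t))\stackrel{d}{=}M\bigl(Y_\alpha(D_f(t))\bigr),
\end{equation*}
with $M$, $Y_\alpha$ and $D_f$ mutually independent. Since $M(s)\stackrel{d}{=}\sum_{j=1}^{k}jN_j(s)$ is a compound-Poisson Lévy process with finite mean $r_1=\sum_{j=1}^{k}j\lambda_j$ (see \eqref{r1r2}), the strong law for Lévy processes upgrades \eqref{limit} to $M(s)/s\to r_1$ almost surely as $s\to\infty$. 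Because $D_f(t)\to\infty$ a.s. (the Lévy measure has infinite mass) and $Y_\alpha(s)\to\infty$ a.s., the inner clock $Y_\alpha(D_f(t))\to\infty$, so the factor $M(Y_\alpha(D_f(t)))/Y_\alpha(D_f(t))\to r_1$ a.s. As the limiting factor is a positive constant, it may be pulled out of the liminf, reducing the problem to the law of $\liminf_{t\to\infty}Y_\alpha(D_f(t))/(g(t))^\alpha$.

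For the subordinator I would cite the liminf law of the iterated logarithm for a subordinator whose Laplace exponent is regularly varying at $0+$ with index $\theta\in(0,1)$ (the Fristedt-type result underlying Maheshwari and Vellaisamy (2019)). In the normalization fixed by $g$ this reads
\begin{equation*}
\liminf_{t\to\infty}\frac{D_f(t)}{g(t)}=\liminf_{t\to\infty}\frac{\phi(t^{-1}\log\log t)}{\log\log t}\,D_f(t)=\theta(1-\theta)^{(1-\theta)/\theta}=:c\quad\text{a.s.}
\end{equation*}
Writing $Y_\alpha(D_f(t))/(g(t))^\alpha=\bigl(Y_\alpha(D_f(t))/(D_f(t))^\alpha\bigr)\,(D_f(t)/g(t))^\alpha$, the self-similarity $Y_\alpha(s)\stackrel{d}{=}s^\alpha Y_\alpha(1)$ (the same property used in the infinite-divisibility proof above) shows that the first factor equals $Y_\alpha(1)$ in distribution for each $t$ and is independent of $D_f$, while the second has a.s. liminf $c^\alpha$. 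Reinstating the constant $r_1$ then yields
\begin{equation*}
\liminf_{t\to\infty}\frac{\mathcal{Z}_f^\alpha(t)}{(g(t))^\alpha}\stackrel{d}{=}r_1\,Y_\alpha(1)\,c^\alpha=\sum_{j=1}^{k}j\lambda_j\,Y_\alpha(1)\,\theta^\alpha(1-\theta)^{\alpha(1-\theta)/\theta},
\end{equation*}
which is \eqref{lil}.

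The step I expect to be the main obstacle is this final interchange, because $Y_\alpha(s)/s^\alpha$ does \emph{not} converge almost surely: its a.s. liminf is $0$ and its limsup is $+\infty$, so one cannot simply multiply pathwise liminits, and the equality in \eqref{lil} is genuinely distributional, with the single surviving factor $Y_\alpha(1)$ recording the self-similar fluctuation frozen at the scale selected by the subordinator's liminf. The rigorous justification rests on the independence of $M^\alpha$ (equivalently $Y_\alpha$) from $D_f$: conditioning on a path of $D_f$, along which $D_f(t)/g(t)\to c$ on the liminf-achieving times and $D_f(t)\to\infty$, one has the distributional convergence $M^\alpha(s)/s^\alpha\stackrel{d}{\to}r_1 Y_\alpha(1)$ (obtained exactly as in the infinite-divisibility argument via $M^\alpha(s)\stackrel{d}{=}M(s^\alpha Y_\alpha(1))$), and this inner fluctuation is asymptotically independent of the outer scaling $(D_f(t)/g(t))^\alpha$. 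This is the mechanism used in the $k=1$ case by Maheshwari and Vellaisamy (2019), and the present proof follows the same template with the rate $\lambda$ replaced by $r_1=\sum_{j=1}^{k}j\lambda_j$.
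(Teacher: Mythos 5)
Your proposal is correct and follows essentially the same route as the paper: the subordination identity $\mathcal{Z}_f^\alpha(t)\stackrel{d}{=}M(Y_\alpha(D_f(t)))$, the limit $M(s)/s\to\sum_{j=1}^{k}j\lambda_j$, Bertoin's liminf law of the iterated logarithm for $D_f$, and the self-similarity of $Y_\alpha$ producing the factor $Y_\alpha(1)$. The only cosmetic difference is that the paper applies self-similarity \emph{before} taking the liminf, rewriting the process as $M(D_f^\alpha(t)Y_\alpha(1))$ so that the single random variable $Y_\alpha(1)$ can be pulled out directly, whereas you keep $Y_\alpha(D_f(t))/D_f^\alpha(t)$ and (rightly) flag that this ratio does not converge pathwise --- a subtlety the paper's own argument silently absorbs into the distributional equality $\stackrel{d}{=}$ in \eqref{lil}.
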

\begin{proof}
From (\ref{ed}) and  (\ref{qws11ww1}), we get
\begin{equation*}
\mathcal{Z}_{f}^{\alpha}(t)\stackrel{d}{=}M(Y_{\alpha}(D_{f}(t)))\stackrel{d}{=}M(D^{\alpha}_{f}(t)Y_{\alpha}(1)),
\end{equation*}  
where we have used the self-similarity property of	$\{Y_{\alpha}(t)\}_{t\ge0}$. Thus,
\begin{align*}
\liminf_{t\rightarrow\infty}\frac{\mathcal{Z}_{f}^{\alpha}(t)}{(g(t))^{\alpha}}&\stackrel{d}{=}\liminf_{t\rightarrow\infty}\frac{M(D^{\alpha}_{f}(t)Y_{\alpha}(1))}{(g(t))^{\alpha}}\\
&=\liminf_{t\rightarrow\infty}\left(\frac{M(D^{\alpha}_{f}(t)Y_{\alpha}(1))}{D^{\alpha}_{f}(t)Y_{\alpha}(1)}\right)\frac{D^{\alpha}_{f}(t)Y_{\alpha}(1)}{(g(t))^{\alpha}}\\
&\stackrel{d}{=}\sum_{j=1}^{k}j\lambda_{j} Y_{\alpha}(1)\left(\liminf_{t\rightarrow\infty}\frac{D_{f}(t)}{g(t)}\right)^{\alpha},\ \ (\text{using}\ (\ref{limit}))\\
&\stackrel{d}{=}\sum_{j=1}^{k}j\lambda_{j} Y_{\alpha}(1)\theta^\alpha\left(1-\theta\right)^{\alpha(1-\theta)/\theta},
\end{align*}
where the fact that $D_f(t)\to\infty$ as $t\to\infty$ a.s., is used in the penultimate step. Also, the last step follows from the following law of iterated logarithm of L\'evy subordinator (see Bertoin (1996), Theorem 14, p. 92):
\begin{equation*}
\liminf_{t\to\infty}\frac{D_{f}(t)}{g(t)}=\theta(1-\theta)^{(1-\theta)/\theta},\ \ \text{a.s.}
\end{equation*}
This completes the proof.
\end{proof}
\begin{remark}
On substituting $k=1$ in (\ref{lil}), we get the law of iterated logarithm for TCFPP-I (see Maheshwari and Vellaisamy (2019), Theorem 3.5). Also, on taking $\lambda_{j}=\lambda$, $j=1,2,\dots,k$  in (\ref{lil}), we get the law of iterated logarithm for a time-changed version of the TFPPoK as follows:
	\begin{equation*}
	\liminf_{t\rightarrow\infty}\frac{\mathcal{Z}_{f}^{\alpha}(t)}{(g(t))^{\alpha}}\stackrel{d}{=}\frac{k(k+1)}{2} \lambda Y_{\alpha}(1)\theta^\alpha\left(1-\theta\right)^{\alpha(1-\theta)/\theta}.
	\end{equation*}
Moreover, on substituting $\lambda_{j}=\lambda(1-\rho)\rho^{j-1}/(1-\rho^{k})$, $0\leq\rho<1$, $j=1,2,\dots,k$ in (\ref{lil}), we get the law of iterated logarithm for a time-changed version of FPAPoK as follows:
\begin{equation*}
\liminf_{t\rightarrow\infty}\frac{\mathcal{Z}_{f}^{\alpha}(t)}{(g(t))^{\alpha}}\stackrel{d}{=}\frac{\lambda}{1-\rho^{k}}\left(1+\rho+\dots+\rho^{k-1}-k\rho^{k}\right) Y_{\alpha}(1)\theta^\alpha\left(1-\theta\right)^{\alpha(1-\theta)/\theta}.
\end{equation*}
\end{remark}
Orsingher and Toaldo (2015) showed that the Poisson process time-changed by a L\'evy subordinator can be obtained as the limit of a suitable compound Poisson process. A similar result holds true for TCGCP-I.
\begin{theorem}
Let $m$ be a fixed positive integer and $\{X_{j}\}_{j\geq1}$ be a sequence of independent and identically distributed random variables such that
\begin{equation*}
\mathrm{Pr}\{X_{1}=n\}=\frac{1}{u(m)}\int_{0}^{\infty}\mathrm{Pr}\{M(s)= n\}\mu(\mathrm{d}s),\ \ n\ge m,
\end{equation*}
where $u(m)=\displaystyle\int_{0}^{\infty}\mathrm{Pr}\{M(s)\ge m\}\mu(\mathrm{d}s)$. Then, for $t>0$, we have
\begin{equation*}
\lim_{m\to 0}Z_m(t)\stackrel{d}{=}\mathcal{Z}_f(t),
\end{equation*}
where $Z_m(t)=X_{1}+X_2+\cdots+X_{N(tu(m)/\Lambda)}$.
\end{theorem}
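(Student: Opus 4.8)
The plan is to show that the compound Poisson process $Z_m(t)$ converges in distribution, as $m \to 0$, to the TCGCP-I $\mathcal{Z}_f(t) = M(D_f(t))$ by matching probability generating functions. First I would compute the pgf of each summand $X_1$. Using the definition of its pmf and writing the numerator in terms of the pgf of the GCP via \eqref{pgfmt}, one finds
\begin{equation*}
\mathbb{E}\left(u^{X_1}\right) = \frac{1}{u(m)}\int_0^\infty \left(\sum_{n\ge m} u^n \,\mathrm{Pr}\{M(s)=n\}\right)\mu(\mathrm{d}s),
\end{equation*}
so that the inner sum is a truncated version of $G(u,s) = \exp\left(-\sum_{j=1}^k \lambda_j(1-u^j)s\right)$ with the low-order terms $n<m$ removed. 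The key point is that as $m\to 0$ the constraint $n\ge m$ becomes vacuous (for any fixed $n\ge 0$ it holds once $m$ is small enough since $m$ takes positive integer values, or equivalently one retains all states), so the truncated pgf tends to the full $G(u,s)$.

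Next I would assemble the pgf of $Z_m(t)$. Since $Z_m(t)$ is a random sum of the i.i.d.\ $X_j$'s indexed by an independent Poisson random variable $N(tu(m)/\Lambda)$ with mean $tu(m)/\Lambda$, the standard compounding formula gives
\begin{equation*}
\mathbb{E}\left(u^{Z_m(t)}\right) = \exp\left\{\frac{tu(m)}{\Lambda}\left(\mathbb{E}\left(u^{X_1}\right)-1\right)\right\}.
\end{equation*}
Substituting the expression for $\mathbb{E}(u^{X_1})$, the factor $u(m)$ cancels against the normalizing constant in the summand's pmf, leaving
\begin{equation*}
\mathbb{E}\left(u^{Z_m(t)}\right) = \exp\left\{\frac{t}{\Lambda}\int_0^\infty \left(\sum_{n\ge m} u^n\,\mathrm{Pr}\{M(s)=n\} - \mathrm{Pr}\{M(s)\ge m\}\right)\mu(\mathrm{d}s)\right\}.
\end{equation*}

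I would then take the limit $m\to 0$ inside the integral. In the limit the bracketed integrand becomes $G(u,s) - 1 = \exp\left(-\sum_{j=1}^k \lambda_j(1-u^j)s\right) - 1$, and integrating against the L\'evy measure $\mu$ reproduces the Bern\v stein function: by the defining relation $f(s)=\int_0^\infty(1-e^{-sx})\mu(\mathrm{d}x)$ one obtains
\begin{equation*}
\int_0^\infty \left(G(u,s)-1\right)\mu(\mathrm{d}s) = -f\left(\sum_{j=1}^k \lambda_j(1-u^j)\right).
\end{equation*}
The stray factor $1/\Lambda$ is reconciled by tracking the normalization carefully (the mean of the Poisson index is scaled precisely so that the surviving constant matches), and the limiting pgf becomes $\exp\left\{-tf\left(\sum_{j=1}^k \lambda_j(1-u^j)\right)\right\}$, which is exactly $G_f(u,t)$ from \eqref{pgff}. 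Since convergence of pgf's on $|u|\le 1$ implies convergence in distribution for integer-valued random variables, this yields $\lim_{m\to 0} Z_m(t) \stackrel{d}{=} \mathcal{Z}_f(t)$. The main obstacle I anticipate is the bookkeeping around the truncation $n\ge m$ and the factor $1/\Lambda$: one must justify interchanging the limit with the integral against $\mu$ (a dominated-convergence argument, using the finiteness guaranteed by the L\'evy measure conditions and the moment assumption $\mathbb{E}(D_f^r(t))<\infty$) and verify that the normalizations cancel exactly so that no spurious constant survives.
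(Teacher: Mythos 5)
Your strategy is the same as the paper's: write down the pgf of the compound Poisson sum, cancel the normalizing constant $u(m)$ against the pmf of $X_1$, let $m\to0$ so that the truncation $n\ge m$ disappears, and identify the Bern\v stein function $f$ via $f(s)=\int_0^\infty(1-e^{-sx})\,\mu(\mathrm{d}x)$ to arrive at the pgf \eqref{pgff} of $\mathcal{Z}_f(t)$. Steps two through four of your outline match the paper's proof essentially line for line.

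The one point you leave genuinely unresolved is the factor $1/\Lambda$, and it cannot be deferred as "bookkeeping": with the compounding formula you wrote, $\mathbb{E}\bigl(u^{Z_m(t)}\bigr)=\exp\bigl\{\tfrac{tu(m)}{\Lambda}\bigl(\mathbb{E}(u^{X_1})-1\bigr)\bigr\}$, the limit is $\exp\bigl\{-\tfrac{t}{\Lambda}f\bigl(\sum_{j=1}^k\lambda_j(1-u^j)\bigr)\bigr\}$, which is \emph{not} $G_f(u,t)$ unless $\Lambda=1$; no amount of later reconciliation fixes this. Your formula implicitly treats $N$ as a unit-rate Poisson process. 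The intended reading (as in Orsingher and Toaldo (2015), and as the paper's first displayed line $\mathbb{E}(u^{Z_m(t)})=\exp\bigl(-tu(m)(1-\mathbb{E}(u^{X_1}))\bigr)$ makes clear) is that $\{N(t)\}_{t\ge0}$ is a Poisson process with rate $\Lambda$, so that $N(tu(m)/\Lambda)$ has mean $tu(m)$ and the factor $\Lambda$ cancels before it ever enters the exponent. Once that is fixed, your argument goes through exactly as the paper's does. (Like you, the paper does not spell out the dominated-convergence justification for interchanging the limit $m\to0$ with the integral against $\mu$, nor the meaning of letting the positive integer $m$ tend to $0$ rather than to $1$; both quirks are inherited from the source it generalizes.)
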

\begin{proof}
The pgf of $Z_{m}(t)$ can be written as 
\begin{align*}
\mathbb{E}\left(u^{Z_{m}(t)}\right)&=\exp\left({-tu(m)\left(1-\mathbb{E}(u^{X_1})\right)}\right)\\
&=\exp \left(-tu(m)\sum_{n=m}^{\infty}(1-u^{n})\mathrm{Pr}\{X_1=n\}\right)\\
&=\exp \left(-tu(m)\sum_{n=m}^{\infty}(1-u^{n})\frac{1}{u(m)}\int_{0}^{\infty}\mathrm{Pr}\{M(s)= n\}\mu(\mathrm{d}s)\right)\\
&=\exp \left(-t\int_{0}^{\infty}\sum_{n=m}^{\infty}(1-u^{n})\mathrm{Pr}\{M(s)= n\}\mu(\mathrm{d}s)\right).
\end{align*}
On letting $m\to 0$, we get
\begin{align*}
\lim\limits_{m\to 0}\mathbb{E}\left(u^{Z_{m}(t)}\right)&=\exp \left(-t\int_{0}^{\infty}\sum_{n=0}^{\infty}(1-u^{n})\mathrm{Pr}\{M(s)= n\}\mu(\mathrm{d}s)\right)\\
&=\exp \left(-t\int_{0}^{\infty}\left(1-e^{-s\sum_{j=1}^{k}\lambda_{j}(1-u^{j})}\right)\mu(\mathrm{d}s)\right)\\
&=\exp\left(-tf\left(\sum_{j=1}^{k}\lambda_{j}(1-u^{j})\right)\right).
\end{align*}
This completes the proof.
\end{proof}
\subsection{Dependence structure of TCGFCP-I}\label{section4.1}
Let $0<s\le t<\infty$ and assume that
 \begin{equation*}
l_{1}=r_{1}/\Gamma(\alpha+1),\ \  l_{2}=r_{2}/\Gamma(\alpha+1),\ \ d=\alpha l_{1}^{2} B(\alpha,\alpha+1),
\end{equation*}
 where $r_{1}$ and $r_{2}$ are given in (\ref{r1r2}). From (\ref{es}), the mean of TCGFCP-I is obtained as follows:
\begin{equation*}
\mathbb{E}\left(\mathcal{Z}_{f}^{\alpha}(t)\right)=\mathbb{E}\left(\mathbb{E}\left(M^{\alpha}(D_{f}(t))|D_{f}(t)\right)\right)=l_{1}\mathbb{E}\left(D_{f}^{\alpha}(t)\right).	
\end{equation*}
On substituting  (\ref{meani}) and (\ref{covin}) in (\ref{cs}), we get
\begin{equation*}
\mathbb{E}\left(M^{\alpha}(s)M^{\alpha}(t)\right)=l_{2}s^{\alpha}+ds^{2\alpha}+\alpha l_{1}^{2}t^{2\alpha}B(\alpha,\alpha+1;s/t).
\end{equation*}
Thus,
\begin{align*}
\mathbb{E}\left(\mathcal{Z}_{f}^{\alpha}(s)\mathcal{Z}_{f}^{\alpha}(t)\right)&=\mathbb{E}\left(\mathbb{E}\left(M^{\alpha}(D_{f}(s))M^{\alpha}(D_{f}(t))|D_{f}(s),D_{f}(t)\right)\right)\\
&=l_{2}\mathbb{E}\left(D_{f}^{\alpha}(s)\right)+ d\mathbb{E}\left(D_{f}^{2\alpha}(s)\right) +\alpha l_{1}^{2}  \mathbb{E}\left(D_{f}^{2\alpha}(t)B\left(\alpha,\alpha+1;D_{f}(s)/D_{f}(t)\right)\right).
\end{align*}
Hence, the covariance of TCGFCP-I is given by
\begin{align}\label{cov}
\operatorname{Cov}\left(\mathcal{Z}_{f}^{\alpha}(s),\mathcal{Z}_{f}^{\alpha}(t)\right)&=l_{2}\mathbb{E}\left(D_{f}^{\alpha}(s)\right)+ d\mathbb{E}\left(D_{f}^{2\alpha}(s)\right)-l_{1}^{2}\mathbb{E}\left(D_{f}^{\alpha}(s)\right)\mathbb{E}\left(D_{f}^{\alpha}(t)\right)\nonumber\\
&\ \ 
+ \alpha l_{1}^{2}  \mathbb{E}\left(D_{f}^{2\alpha}(t)B\left(\alpha,\alpha+1;D_{f}(s)/D_{f}(t)\right)\right).
\end{align}
On putting $s=t$ in (\ref{cov}), we get its  variance as follows:
\begin{equation}\label{var}
\operatorname{Var}\left(\mathcal{Z}_{f}^{\alpha}(t)\right)=\mathbb{E}\left(D_{f}^{\alpha}(t)\right)\left(l_{2}-l_{1}^{2}\mathbb{E}\left(D_{f}^{\alpha}(t)\right)\right)+2d\mathbb{E}\left(D_{f}^{2\alpha}(t)\right).
\end{equation}

Next, we show that the TCGFCP-I has the LRD property provided the associated L\'evy subordinator satisfies certain asymptotic conditions.

\begin{theorem}
Let $\mathbb{E}\left(D_f^{i\alpha}(t)\right)\sim k_it^{i\theta}$ for $i=1,2$
such that  $0< \theta <1$, $k_1>0$ and $k_2\geq k_1^2$. Then, the TCGFCP-I  exhibits the LRD property.
\end{theorem}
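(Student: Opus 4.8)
The plan is to establish the long-range dependence (LRD) property by computing the asymptotic behaviour of the correlation function $\operatorname{Corr}\left(\mathcal{Z}_{f}^{\alpha}(s),\mathcal{Z}_{f}^{\alpha}(t)\right)$ as $t\to\infty$ for fixed $s$, and showing it decays like $c(s)t^{-\gamma}$ with $\gamma=\theta\in(0,1)$, so that the definition of LRD given after the GSP overdispersion remark applies. The starting point is the exact covariance formula \eqref{cov} and variance formula \eqref{var}, into which I would substitute the hypothesis $\mathbb{E}\left(D_f^{i\alpha}(t)\right)\sim k_i t^{i\theta}$ for $i=1,2$.

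First I would analyze the denominator. For fixed $s$, the factor $\sqrt{\operatorname{Var}\left(\mathcal{Z}_{f}^{\alpha}(s)\right)}$ is a positive constant, so the whole denominator behaves like that constant times $\sqrt{\operatorname{Var}\left(\mathcal{Z}_{f}^{\alpha}(t)\right)}$. Using \eqref{var} together with $\mathbb{E}\left(D_f^{\alpha}(t)\right)\sim k_1 t^\theta$ and $\mathbb{E}\left(D_f^{2\alpha}(t)\right)\sim k_2 t^{2\theta}$, the dominant term in the variance as $t\to\infty$ is of order $t^{2\theta}$, with leading coefficient $2dk_2 - l_1^2 k_1^2$; here the condition $k_2\ge k_1^2$ (combined with $d=\alpha l_1^2 B(\alpha,\alpha+1)>0$) guarantees this coefficient is strictly positive, so $\sqrt{\operatorname{Var}\left(\mathcal{Z}_{f}^{\alpha}(t)\right)}\sim C\,t^\theta$ for an explicit $C>0$. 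This positivity check is the role of the hypothesis $k_2\ge k_1^2$ and must be stated carefully.

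Next I would analyze the numerator \eqref{cov}. As $t\to\infty$ with $s$ fixed, the first three terms $l_{2}\mathbb{E}(D_{f}^{\alpha}(s))+d\mathbb{E}(D_{f}^{2\alpha}(s))-l_1^2\mathbb{E}(D_f^{\alpha}(s))\mathbb{E}(D_f^{\alpha}(t))$ contribute a bounded piece plus a term of order $t^\theta$ from the product $\mathbb{E}(D_f^{\alpha}(s))\mathbb{E}(D_f^{\alpha}(t))\sim k_1 t^\theta \mathbb{E}(D_f^{\alpha}(s))$. The remaining term $\alpha l_1^2\,\mathbb{E}\left(D_{f}^{2\alpha}(t)B(\alpha,\alpha+1;D_{f}(s)/D_{f}(t))\right)$ requires the asymptotics of the incomplete beta function $B(\alpha,\alpha+1;x)$ as its upper limit $x\to 0$: since $D_f(s)/D_f(t)\to 0$ a.s.\ as $t\to\infty$ and $B(\alpha,\alpha+1;x)\sim x^\alpha/\alpha$ for small $x$, this term behaves like $l_1^2\,\mathbb{E}\left(D_{f}^{2\alpha}(t)(D_f(s)/D_f(t))^{\alpha}\right)=l_1^2\,\mathbb{E}\left(D_f^{\alpha}(s)D_f^{\alpha}(t)\right)$, which by independence of increments one would reduce to an expression of order $t^\theta$. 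Collecting terms, the numerator is asymptotically of order $t^\theta$ with a positive $s$-dependent coefficient, so $\operatorname{Corr}\sim c(s)t^{\theta-2\theta}=c(s)t^{-\theta}$.

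The main obstacle is the rigorous treatment of the incomplete-beta term: justifying the interchange of the small-$x$ asymptotic expansion of $B(\alpha,\alpha+1;\cdot)$ with the expectation, and controlling the contribution of the event where $D_f(s)/D_f(t)$ is not small. This is where the moment hypothesis $\mathbb{E}(D_f^r(t))<\infty$ for all $r>0$ and a dominated-convergence or uniform-integrability argument are needed, together with the regular variation $\mathbb{E}(D_f^{i\alpha}(t))\sim k_it^{i\theta}$ to identify the exact growth rate. Once the numerator is shown to grow like $t^\theta$ and the denominator like $t^{2\theta}$, the conclusion $\gamma=\theta\in(0,1)$ follows directly from the LRD definition, completing the proof.
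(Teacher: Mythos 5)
Your overall strategy (asymptotics of numerator and denominator of the correlation, using the hypotheses on $\mathbb{E}(D_f^{i\alpha}(t))$ and the small-argument behaviour of the incomplete beta function) is the same as the paper's, and your treatment of the denominator and of the role of $k_2\ge k_1^2$ is correct. However, there is a genuine error in your analysis of the numerator. You claim that the covariance (\ref{cov}) is asymptotically of order $t^{\theta}$ with a positive coefficient. It is not: the term $-l_1^2\mathbb{E}(D_f^{\alpha}(s))\mathbb{E}(D_f^{\alpha}(t))\sim -l_1^2k_1\mathbb{E}(D_f^{\alpha}(s))\,t^{\theta}$ is cancelled, to leading order, by the incomplete-beta term, which satisfies $\alpha\,\mathbb{E}\bigl(D_f^{2\alpha}(t)B(\alpha,\alpha+1;D_f(s)/D_f(t))\bigr)\sim \mathbb{E}(D_f^{\alpha}(s))\mathbb{E}(D_f^{\alpha}(t-s))\sim k_1\mathbb{E}(D_f^{\alpha}(s))(t-s)^{\theta}$ (this is precisely Theorem 3.3 of Maheshwari and Vellaisamy (2019), which the paper invokes; your own heuristic $\mathbb{E}(D_f^{\alpha}(s)D_f^{\alpha}(t))$ reduced via independent increments leads to the same thing). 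The residue is $-l_1^2k_1\mathbb{E}(D_f^{\alpha}(s))\bigl(t^{\theta}-(t-s)^{\theta}\bigr)\sim -l_1^2k_1\mathbb{E}(D_f^{\alpha}(s))s\theta t^{\theta-1}\to 0$, so the covariance converges to the \emph{constant} $l_2\mathbb{E}(D_f^{\alpha}(s))+d\mathbb{E}(D_f^{2\alpha}(s))$ rather than growing like $t^{\theta}$.

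This matters because your exponent bookkeeping is then inconsistent: you correctly establish that the denominator $\sqrt{\operatorname{Var}(\mathcal{Z}_f^{\alpha}(s))\operatorname{Var}(\mathcal{Z}_f^{\alpha}(t))}$ is of order $t^{\theta}$ for fixed $s$ (not $t^{2\theta}$), so a numerator of order $t^{\theta}$ would give $\operatorname{Corr}\to\text{const}>0$, which does not fit the definition $\operatorname{Corr}\sim c(s)t^{-\gamma}$ with $\gamma\in(0,1)$ and would not prove LRD. Your final answer $c(s)t^{-\theta}$ is reached only because the wrong numerator order ($t^{\theta}$ instead of $O(1)$) is divided by the wrong denominator order ($t^{2\theta}$ instead of $t^{\theta}$), and the two mistakes compensate. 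To repair the argument you must carry out the cancellation between the two $t^{\theta}$-order terms explicitly, as the paper does, obtaining a numerator that tends to $l_2\mathbb{E}(D_f^{\alpha}(s))+d\mathbb{E}(D_f^{2\alpha}(s))$; dividing by the denominator of order $t^{\theta}$ then yields the correct rate $c_1(s)t^{-\theta}$ and hence LRD.
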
	
\begin{proof}
For fixed $s$ and large $t$, the following asymptotic result holds (see Maheshwari and Vellaisamy (2019), Theorem 3.3):
\begin{equation*}
\alpha\mathbb{E}\left(D_f^{2\alpha}(t)B\left(\alpha,\alpha+1;D_f(s)/D_f(t)\right)\right)\sim \mathbb{E}\left(D_f^{\alpha}(s)\right)\E\left(D_f^{\alpha}(t-s)\right).
\end{equation*}
On using it in (\ref{cov}), we get
\begin{align*}
\operatorname{Cov}\left(\mathcal{Z}_{f}^{\alpha}(s),\mathcal{Z}_{f}^{\alpha}(t)\right)&\sim l_{2}\mathbb{E}\left(D_{f}^{\alpha}(s)\right)+ d\mathbb{E}\left(D_{f}^{2\alpha}(s)\right)-l_{1}^{2}\mathbb{E}\left(D_{f}^{\alpha}(s)\right)\mathbb{E}\left(D_{f}^{\alpha}(t)\right)\\
&\ \ \ 
+l_{1}^{2}  \mathbb{E}\left(D_f^{\alpha}(s)\right)\E\left(D_f^{\alpha}(t-s)\right)\\
&\sim l_{2}\mathbb{E}\left(D_{f}^{\alpha}(s)\right)+ d\mathbb{E}\left(D_{f}^{2\alpha}(s)\right)-l_{1}^{2}  \mathbb{E}\left(D_f^{\alpha}(s)\right)k_1(t^\theta-(t-s)^\theta)\\
&\sim l_{2}\mathbb{E}\left(D_{f}^{\alpha}(s)\right)+ d\mathbb{E}\left(D_{f}^{2\alpha}(s)\right)-l_{1}^{2}  \mathbb{E}\left(D_f^{\alpha}(s)\right)k_1s\theta t^{\theta-1},
\end{align*}	
where we have used $\mathbb{E}\left(D_f^{\alpha}(t)\right)\sim k_1t^{\theta}$ in the penultimate step.
	
Similarly, from (\ref{var}), we get
\begin{align*}
\operatorname{Var}\left(\mathcal{Z}_{f}^{\alpha}(t)\right)&\sim l_{2}k_1t^\theta- l_{1}^{2}k^{2}_1t^{2\theta} +2dk_2t^{2\theta}\\
&\sim \left(2dk_2-k^{2}_1l_{1}^{2}\right)t^{2\theta}.
\end{align*}
\noindent For large $t$, we have
\begin{align*}
\operatorname{Corr}\left(\mathcal{Z}_{f}^{\alpha}(s),\mathcal{Z}_{f}^{\alpha}(t)\right)&\sim\frac{l_{2}\mathbb{E}\left(D_{f}^{\alpha}(s)\right)+ d\mathbb{E}\left(D_{f}^{2\alpha}(s)\right)-l_{1}^{2}  \mathbb{E}\left(D_f^{\alpha}(s)\right)k_1s\theta t^{\theta-1}}{\sqrt{\operatorname{Var}\left(\mathcal{Z}_{f}^{\alpha}(s)\right)}\sqrt{\left(2dk_2-k^{2}_1l_{1}^{2}\right)t^{2\theta}}}\\
&\sim c_{1}(s)t^{-\theta},  		
\end{align*}
\noindent where
\begin{equation*}
c_{1}(s)=\frac{l_{2}\mathbb{E}\left(D_{f}^{\alpha}(s)\right)+ d\mathbb{E}\left(D_{f}^{2\alpha}(s)\right)}{\sqrt{\operatorname{Var}\left(\mathcal{Z}_{f}^{\alpha}(s)\right)\left(2dk_2-k^{2}_1l_{1}^{2}\right)}}.
\end{equation*}		
As $0< \theta <1$, the proof follows.
\end{proof}
\begin{remark}
Along the similar lines it can be shown that TCGCP-I exhibits the LRD property.
\end{remark}

\subsection{Some special cases of the TCGCP-I}
Here, we discuss three special cases of the TCGCP-I by taking three specific L\'evy subordinators, namely, the gamma subordinator, the tempered stable subordinator (TSS) and the inverse Gaussian subordinator (IGS) as a time-change component in the GCP.
\subsubsection{GCP time-changed by gamma subordinator}
The pdf $g(x,t)$ of a gamma subordinator $\{Z(t)\}_{t\ge0}$ is given by
\begin{equation*}
g(x,t)=\frac{a^{bt}}{\Gamma(bt)}x^{bt-1}e^{-ax},\ \ x>0,
\end{equation*}
where $a>0$ and $b>0$. Its associated Bern\v stein function is $f_{1}(s)=b\log(1+s/a)$, $s>0$ (see Applebaum (2009), p. 55).

On taking $f_1$ as the Bern\v stein function in (\ref{zft}), we get the GCP time-changed by an independent gamma subordinator as 
\begin{equation}\label{gam}
\mathcal{Z}_{f_{1}}(t)\coloneqq M
(Z(t)),\ \ t\ge0.
\end{equation}

On using (\ref{bgd43}), the distribution of its jumps is obtained in the following form:
\begin{equation}\label{mlkjuy7}
\mathrm{Pr}\{\mathcal{Z}_{f_{1}}(h)=n\}=\begin{cases*}
1-bh\log(1+\Lambda/a)+o(h),\ \ n=0,\\
\displaystyle-bh\sum_{ \Omega(k,n)}\frac{(-1)^{s_{k}-1}(s_{k}-1)!}{(a+\Lambda)^{s_{k}}}\prod_{j=1}^{k}\frac{(-\lambda_{j})^{x_{j}}}{x_{j}!}+o(h),\ \ n\ge1.
\end{cases*}
\end{equation}
\begin{remark}
On taking $k=a=b=1$ in (\ref{mlkjuy7}), we get the distribution of jumps of gamma-Poisson process (see Orsingher and Toaldo (2015), Eq. (4.16)).
\end{remark}

From (\ref{pgff}), the pgf of $\mathcal{Z}_{f_{1}}(t)$ is given by
\begin{equation*}
G_{f_{1}}(u,t)=\left(1+\frac{1}{a}\sum_{j=1}^{k}\lambda_{j}(1-u^{j})\right)^{-bt}.
\end{equation*}
\begin{proposition}\label{pwsq23}
The L\'evy measure of $\mathcal{Z}_{f_{1}}(t)$ is given by
\begin{equation}\label{bc56}
\Pi_{f_{1}}(\mathrm{d}x)=\sum_{n=1}^{\infty}\sum_{ \Omega(k,n)}\prod_{j=1}^{k}\frac{\lambda_{j}^{x_{j}}}{x_{j}!}\frac{b\Gamma(s_{k})}{(\Lambda+a)^{s_{k}}}\delta_{n}(\mathrm{d}x).
\end{equation}
\end{proposition}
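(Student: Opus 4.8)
The plan is to exploit the fact that $\{\mathcal{Z}_{f_1}(t)\}_{t\ge0}=\{M(Z(t))\}_{t\ge0}$ is itself a L\'evy process---being the subordination of the (L\'evy) GCP $\{M(t)\}_{t\ge0}$ by the independent gamma subordinator $\{Z(t)\}_{t\ge0}$---with nondecreasing, integer-valued sample paths. Consequently its L\'evy measure is a purely atomic measure supported on the positive integers, and it suffices to compute the point masses $\Pi_{f_1}(\{n\})$ for $n\ge1$ and then assemble them as $\sum_{n\ge1}\Pi_{f_1}(\{n\})\,\delta_n(\mathrm{d}x)$.

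First I would identify the L\'evy measure $\rho$ of the gamma subordinator from its Bern\v stein function $f_1(s)=b\log(1+s/a)$. Writing $f_1(s)=\int_0^\infty(1-e^{-sx})\,\rho(\mathrm{d}x)$ and using the Frullani integral $\int_0^\infty(e^{-ax}-e^{-(a+s)x})x^{-1}\,\mathrm{d}x=\log(1+s/a)$ gives $\rho(\mathrm{d}x)=b\,x^{-1}e^{-ax}\,\mathrm{d}x$, and the gamma subordinator has no drift. I would then invoke the standard formula for the L\'evy measure of a subordinated L\'evy process (cf.\ Applebaum (2009)): since the drift vanishes, $\Pi_{f_1}(\{n\})=\int_0^\infty p(n,s)\,\rho(\mathrm{d}s)$ for $n\ge1$, where $p(n,s)=\mathrm{Pr}\{M(s)=n\}$ is the GCP pmf from (\ref{p(n,t)}).

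The remaining computation is routine. Substituting (\ref{p(n,t)}), which after collecting the powers of $s$ reads $p(n,s)=\sum_{\Omega(k,n)}\prod_{j=1}^k\frac{\lambda_j^{x_j}}{x_j!}\,s^{s_k}e^{-\Lambda s}$, and interchanging summation and integration yields
\[
\Pi_{f_1}(\{n\})=b\sum_{\Omega(k,n)}\prod_{j=1}^k\frac{\lambda_j^{x_j}}{x_j!}\int_0^\infty s^{s_k-1}e^{-(\Lambda+a)s}\,\mathrm{d}s=\sum_{\Omega(k,n)}\prod_{j=1}^k\frac{\lambda_j^{x_j}}{x_j!}\frac{b\,\Gamma(s_k)}{(\Lambda+a)^{s_k}},
\]
where I use $\int_0^\infty s^{s_k-1}e^{-(\Lambda+a)s}\,\mathrm{d}s=\Gamma(s_k)/(\Lambda+a)^{s_k}$; this is legitimate because every configuration in $\Omega(k,n)$ with $n\ge1$ has $s_k=x_1+\cdots+x_k\ge1$, so the integral converges. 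Summing over $n\ge1$ then gives (\ref{bc56}).

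Alternatively, and without invoking the subordination theorem, I can read the atoms directly off the jump distribution (\ref{mlkjuy7}) already established for this case, via $\Pi_{f_1}(\{n\})=\lim_{h\to0^+}h^{-1}\mathrm{Pr}\{\mathcal{Z}_{f_1}(h)=n\}$; the only delicate point there is the sign bookkeeping $(-1)^{s_k-1}\prod_{j=1}^k(-\lambda_j)^{x_j}=-\prod_{j=1}^k\lambda_j^{x_j}$, which converts the alternating coefficient of (\ref{mlkjuy7}) into the positive one above. I do not expect a genuine obstacle in either route; the points meriting care are simply justifying the interchange of sum and integral (all integrands are nonnegative, so monotone/dominated convergence applies) and confirming the support claim $s_k\ge1$ that makes the Gamma integral finite.
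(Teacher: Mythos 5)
Your proof is correct and follows essentially the same route as the paper: the paper likewise applies the subordination formula for Lévy measures (citing Sato (1999), Theorem 30.1) with the gamma subordinator's Lévy measure $b s^{-1}e^{-as}\,\mathrm{d}s$, substitutes the GCP pmf (\ref{p(n,t)}), and evaluates the resulting gamma integral $\int_0^\infty s^{s_k-1}e^{-(\Lambda+a)s}\,\mathrm{d}s=\Gamma(s_k)/(\Lambda+a)^{s_k}$. Your additional remarks (deriving the subordinator's Lévy measure via the Frullani integral, checking $s_k\ge1$, and the alternative route through the jump distribution (\ref{mlkjuy7})) are sound but not needed beyond what the paper does.
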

\begin{proof}
The L\'evy measure for gamma subordinator is given by $\mu_{Z}(\mathrm{d}s
)=bs^{-1}e^{-as}\mathrm{d}s$. Using a result (see Sato (1999), Theorem 30.1, p. 197), the  L\'evy measure of $\mathcal{Z}_{f_{1}}(t)$ is obtained as follows:
\begin{align*}
\Pi_{f_{1}}(\mathrm{d}x)&=\int_{0}^{\infty}\sum_{n=1}^{\infty}p(n,s)\delta_{n}(\mathrm{d}x)\mu_{Z}(\mathrm{d}s)\\	&=\sum_{n=1}^{\infty}\sum_{ \Omega(k,n)}\prod_{j=1}^{k}\frac{\lambda_{j}^{x_{j}}}{x_{j}!}b\delta_{n}(\mathrm{d}x)\int_{0}^{\infty}s^{s_{k}-1}e^{-(\Lambda+a)s}\mathrm{d}s,
\end{align*}
where we have used (\ref{p(n,t)}). This gives the required result.	
\end{proof}
\begin{remark}
On taking $k=1$ in (\ref{bc56}), we get
\begin{equation*}
\Pi_{f_{1}}(\mathrm{d}x)\Big|_{k=1}=	\sum_{n=1}^{\infty}\frac{b}{n}\left(\frac{\lambda_1}{\lambda_1+a}\right)^{n}\delta_{n}(\mathrm{d}x),
\end{equation*}
which is the L\'evy measure of negative binomial process (see Beghin and Vellaisamy (2018)).
\end{remark}
\begin{proposition}
Let $\gamma\ge1$ and $\kappa(x)\coloneqq\Gamma^{\prime}(x)/\Gamma(x)$ be the digamma function. Then, the pmf $p_{f_{1}}(n,t)=\mathrm{Pr}\{\mathcal{Z}_{f_{1}}(t)=n\}$, $n\geq0$ solves the following equation:
\begin{equation*}
D_t^{\gamma}p_{f_{1}}(n,t)=bD_t^{\gamma-1}\left(\log (a)-\kappa(bt)\right)p_{f_{1}}(n,t)+b\int_{0}^{\infty}p(n,x)\log (x)D_t^{\gamma-1}g(x,t)\,\mathrm{d}x,
\end{equation*}
where $D_t^{\gamma}$ is the R-L fractional derivative defined in (\ref{RLd}).
\end{proposition}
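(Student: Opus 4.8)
The plan is to reduce the entire statement to a single computation of the time-derivative of the gamma density, and then lift it to fractional order. Since $\mathcal{Z}_{f_{1}}(t)=M(Z(t))$ with the GCP $\{M(t)\}_{t\ge0}$ independent of the gamma subordinator, conditioning on $Z(t)$ gives the integral representation
\begin{equation*}
p_{f_{1}}(n,t)=\int_{0}^{\infty}p(n,x)\,g(x,t)\,\mathrm{d}x,
\end{equation*}
with $p(n,x)$ the GCP pmf from (\ref{p(n,t)}). I would apply the R--L fractional derivative $D_{t}^{\gamma}$ of (\ref{RLd}) to this identity and move it inside the integral, so that everything is driven by $D_{t}^{\gamma}g(x,t)$.

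First I would compute the ordinary time-derivative of $g$. Taking logarithms, $\log g(x,t)=bt\log a-\log\Gamma(bt)+(bt-1)\log x-ax$, and differentiating in $t$ produces the digamma function through $\frac{\mathrm{d}}{\mathrm{d}t}\log\Gamma(bt)=b\kappa(bt)$. Hence
\begin{equation*}
\frac{\partial}{\partial t}g(x,t)=b\bigl(\log a-\kappa(bt)+\log x\bigr)g(x,t).
\end{equation*}
This single identity already carries the two structural pieces of the claim: the $x$-independent factor $\log a-\kappa(bt)$ and the $x$-dependent factor $\log x$.

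Next I would pass from the first-order derivative to $D_{t}^{\gamma}$ via the composition rule $D_{t}^{\gamma}g=D_{t}^{\gamma-1}\bigl(\partial_{t}g\bigr)$ valid for $\gamma\ge1$. The only boundary term that can obstruct this identity involves $g(x,0+)$, and for each fixed $x>0$ one has $g(x,0+)=0$ because $1/\Gamma(bt)\to0$ as $t\to0+$; so the composition holds cleanly at the level of the density. Substituting the derivative formula and splitting into two terms, the factor $\log a-\kappa(bt)$ in the first term does not depend on $x$, so after interchanging $D_{t}^{\gamma-1}$ with the $x$-integral it recombines with $p_{f_{1}}(n,t)=\int_{0}^{\infty}p(n,x)g(x,t)\,\mathrm{d}x$ to give $b\,D_{t}^{\gamma-1}\bigl[(\log a-\kappa(bt))\,p_{f_{1}}(n,t)\bigr]$, while in the second term $\log x$ must stay inside, yielding $b\int_{0}^{\infty}p(n,x)\log x\,D_{t}^{\gamma-1}g(x,t)\,\mathrm{d}x$. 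Assembling the pieces produces exactly the stated equation.

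The main obstacle will be justifying the interchanges of the non-local operators $D_{t}^{\gamma}$ and $D_{t}^{\gamma-1}$ with the improper integral $\int_{0}^{\infty}\cdots\mathrm{d}x$, rather than the algebra. Near $x=0$ the density carries the integrable singularity $x^{bt-1}$ and $\log x$ diverges, while near $t=0$ the R--L kernel $t^{-\gamma}$ interacts with $\Gamma(bt)$; one must exhibit a dominating function, uniform on compact $t$-intervals, to run a Fubini/dominated-convergence argument. Here the exponential decay $e^{-ax}$, the polynomial control on $p(n,x)$, the fact that $\log x$ grows slower than any power of $x$, and the standing assumption that the subordinator has finite moments of all orders together supply the needed integrability, so the interchanges are legitimate and the formal computation can be made rigorous.
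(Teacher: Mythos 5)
Your proposal is correct and follows essentially the same route as the paper: both start from the subordination identity $p_{f_1}(n,t)=\int_0^\infty p(n,x)\,g(x,t)\,\mathrm{d}x$, push $D_t^{\gamma}$ inside the integral, invoke the identity $D_t^{\gamma}g(x,t)=b\,D_t^{\gamma-1}\bigl[(\log(ax)-\kappa(bt))\,g(x,t)\bigr]$ for the gamma density, and then split $\log(ax)=\log a+\log x$ so that the $x$-independent part recombines with $p_{f_1}(n,t)$. The only difference is that you derive that identity yourself (logarithmic differentiation of $g$ plus the composition rule $D_t^{\gamma}=D_t^{\gamma-1}\partial_t$ justified by $g(x,0+)=0$), whereas the paper simply cites it as Lemma 2.2 of Beghin and Vellaisamy (2018).
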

\begin{proof}
From (\ref{gam}), we have
\begin{equation}\label{qaza122}
p_{f_{1}}(n,t)=\int_{0}^{\infty}p(n,x)g(x,t)\,\mathrm{d}x.
\end{equation}
The following result holds for the pdf of gamma subordinator (see Beghin and Vellaisamy (2018), Lemma 2.2):
\begin{align*}
D_t^{\gamma}g(x,t)&=bD_t^{\gamma-1}\left(\log(a x)-\kappa(bt)\right)g(x,t), \ \ \ x>0,\\
g(x,0)&=0.	
\end{align*}
Taking the R-L fractional derivative in (\ref{qaza122}) and using the above result, we get
\begin{align*}
D_t^{\gamma}p_{f_{1}}(n,t)&=\int_{0}^{\infty}p(n,x)D_t^{\gamma}g(x,t)\,\mathrm{d}x\\
&=b\int_{0}^{\infty}p(n,x)D_t^{\gamma-1}\left(\log(a x)-\kappa(bt)\right)g(x,t)\,\mathrm{d}x\\
&=bD_t^{\gamma-1}\log (a) \int_{0}^{\infty}p(n,x)g(x,t)\,\mathrm{d}x+b \int_{0}^{\infty} p(n,x)\log (x)D_t^{\gamma-1}g(x,t)\,\mathrm{d}x\\
&\ \ - bD_t^{\gamma-1}\kappa(bt)\int_{0}^{\infty}p(n,x)g(x,t)\,\mathrm{d}x.
\end{align*}
The proof follows on using (\ref{qaza122}).
\end{proof}
\subsubsection{GCP time-changed by tempered stable subordinator}
Let $0<\theta<1$ be the stability index and $\eta>0$ be the tempering parameter of a TSS $\{\mathscr{D}_{\eta,\theta}(t)\}_{t\ge0}$. Its associated Bern\v stein function $f_{2}(s)$ is given by 
\begin{equation}\label{bs1}
f_{2}(s)=(\eta+s)^{\theta}-\eta^{\theta},\ s>0.
\end{equation}

On taking $f_2$ as the Bern\v stein function in (\ref{zft}), we get the GCP time-changed by an independent TSS as 
\begin{equation}\label{tss}
\mathcal{Z}_{f_{2}}(t)\coloneqq M(\mathscr{D}_{\eta,\theta}(t)),\ \ t\ge0.
\end{equation}	

On using (\ref{bgd43}), the distribution of its jumps is obtained in the following form: 
\begin{equation}\label{tgfd45}
\mathrm{Pr}\{\mathcal{Z}_{f_{2}}(h)=n\}=\begin{cases*}
1-h\left((\eta+\Lambda)^{\theta}-\eta^{\theta}\right)+o(h),\ \ n=0,\\
\displaystyle- h\sum_{ \Omega(k,n)}(\theta)_{s_{k}}(\eta+\Lambda)^{\theta-s_{k}}\prod_{j=1}^{k}\frac{(-\lambda_{j})^{x_{j}}}{x_{j}!}+o(h),\ \ n\ge1,
\end{cases*}
\end{equation}
where $(\theta)_{s_{k}}=\theta(\theta-1)\cdots(\theta-s_k+1)$.
\begin{remark}
On taking $k=1$ in (\ref{tgfd45}), we get the distribution of jumps of relativistic Poisson process (see Orsingher and Toaldo (2015), Eq. (4.11)).
\end{remark}

From (\ref{pgff}), its pgf is given by
\begin{equation*}
G_{f_{2}}(u,t)=\exp\left\{-t\left(\left(\eta+\sum_{j=1}^{k}\lambda_{j}(1-u^{j})\right)^{\theta}-\eta^{\theta}\right)\right\}.
\end{equation*}
\begin{proposition}\label{pwsq}
The L\'evy measure of $\mathcal{Z}_{f_{2}}(t)$ is given by
\begin{equation*}
\Pi_{f_{2}}(\mathrm{d}x)=	\frac{\theta}{\Gamma(1-\theta)}\sum_{n=1}^{\infty}\sum_{ \Omega(k,n)}\prod_{j=1}^{k}\frac{\lambda_{j}^{x_{j}}}{x_{j}!}\frac{\Gamma(s_{k}-\theta)}{(\Lambda+\eta)^{s_{k}-\theta}}\delta_{n}(\mathrm{d}x).
\end{equation*}
\end{proposition}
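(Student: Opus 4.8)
The plan is to mirror the argument used for the gamma subordinator in Proposition \ref{pwsq23}, simply replacing the gamma L\'evy measure by that of the TSS. The structural tool is again the subordination formula for L\'evy measures (Sato (1999), Theorem 30.1, p. 197), which expresses the L\'evy measure of $\mathcal{Z}_{f_{2}}(t)=M(\mathscr{D}_{\eta,\theta}(t))$ as
\begin{equation*}
\Pi_{f_{2}}(\mathrm{d}x)=\int_{0}^{\infty}\sum_{n=1}^{\infty}p(n,s)\,\delta_{n}(\mathrm{d}x)\,\mu_{\mathscr{D}}(\mathrm{d}s),
\end{equation*}
where $p(n,s)$ is the GCP pmf given in (\ref{p(n,t)}) and $\mu_{\mathscr{D}}$ is the L\'evy measure of the TSS.

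First I would record the L\'evy measure of the TSS. Since its Bern\v stein function $f_{2}(s)=(\eta+s)^{\theta}-\eta^{\theta}$ is the stable exponent $s^{\theta}$ modified by exponential tempering, the associated L\'evy density is the stable density multiplied by the tempering factor $e^{-\eta s}$, namely
\begin{equation*}
\mu_{\mathscr{D}}(\mathrm{d}s)=\frac{\theta}{\Gamma(1-\theta)}s^{-\theta-1}e^{-\eta s}\,\mathrm{d}s.
\end{equation*}
A quick check that $\int_{0}^{\infty}(1-e^{-sx})\mu_{\mathscr{D}}(\mathrm{d}x)=(\eta+s)^{\theta}-\eta^{\theta}$ confirms the normalizing constant $\theta/\Gamma(1-\theta)$.

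Next I would substitute the explicit GCP pmf into the subordination formula. Writing $p(n,s)=\sum_{\Omega(k,n)}\prod_{j=1}^{k}(\lambda_{j}s)^{x_{j}}/x_{j}!\,e^{-\Lambda s}$ and collecting the total power of $s$ as $s^{s_{k}}$ with $s_{k}=x_{1}+\dots+x_{k}$, the computation reduces, after interchanging the nonnegative sum and integral by Tonelli's theorem, to the single gamma integral
\begin{equation*}
\frac{\theta}{\Gamma(1-\theta)}\int_{0}^{\infty}s^{s_{k}-\theta-1}e^{-(\Lambda+\eta)s}\,\mathrm{d}s=\frac{\theta}{\Gamma(1-\theta)}\frac{\Gamma(s_{k}-\theta)}{(\Lambda+\eta)^{s_{k}-\theta}}.
\end{equation*}
Assembling the remaining factors $\prod_{j=1}^{k}\lambda_{j}^{x_{j}}/x_{j}!$ together with the point masses $\delta_{n}$ then yields the claimed expression.

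The only point requiring care, and the main (mild) obstacle, is the convergence of the gamma integral, which demands $s_{k}-\theta>0$. This is guaranteed because for every $n\ge1$ each admissible tuple in $\Omega(k,n)$ satisfies $s_{k}=\sum_{j=1}^{k}x_{j}\ge1>\theta$, so each integral is finite and no $n=0$ term contributes; consequently $\Pi_{f_{2}}$ charges only the positive integers, consistent with the stated formula.
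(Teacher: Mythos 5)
Your proposal is correct and is exactly the argument the paper intends: the paper states that the proof of Proposition \ref{pwsq} follows the same lines as Proposition \ref{pwsq23} (the gamma case), using Sato's Theorem 30.1 together with the TSS L\'evy measure $\mu_{\mathscr{D}_{\eta,\theta}}(\mathrm{d}s)=\theta s^{-\theta-1}e^{-\eta s}\mathrm{d}s/\Gamma(1-\theta)$, which is precisely what you do. Your added remarks on verifying the normalizing constant and on the convergence condition $s_{k}\ge 1>\theta$ are sound and slightly more explicit than the paper's one-line justification.
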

The proof of Proposition \ref{pwsq} follows similar lines to that of Proposition \ref{pwsq23} by using the L\'evy measure of TSS, that is,  $\mu_{\mathscr{D}_{\eta,\theta}}(\mathrm{d}s)=\theta s^{-\theta-1}e^{-\eta s}\mathrm{d}s/\Gamma(1-\theta)$. 

\begin{proposition}
The pmf $p_{f_{2}}(n,t)=\mathrm{Pr}\{\mathcal{Z}_{f_{2}}(t)=n\}$ satisfies the following system of differential equations:
\begin{equation*}
\left(\eta^{\theta}-\frac{\mathrm{d}}{\mathrm{d} t}\right)^{1/\theta}p_{f_{2}}(n,t)=(\eta +\Lambda) p_{f_{2}}(n,t)-\sum_{j=1}^{\min\{n,k\}}\lambda_{j}p_{f_{2}}(n-j,t),\ \ n\ge0.
\end{equation*}
\end{proposition}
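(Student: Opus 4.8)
The plan is to lift the claimed recursion to the level of the probability generating function (pgf), prove there a single operator identity, and then read off the equation for each $p_{f_2}(n,t)$ by comparing coefficients of $u^n$. Write $G_{f_2}(u,t)=\mathbb{E}(u^{\mathcal{Z}_{f_2}(t)})=\exp\{-t((\eta+w)^\theta-\eta^\theta)\}$, where I abbreviate $w=w(u)=\sum_{j=1}^{k}\lambda_j(1-u^j)$, so that $f_2(w)=(\eta+w)^\theta-\eta^\theta$. First I would invoke the first-order equation already recorded for the pgf of the TCGCP-I, namely $\frac{\mathrm{d}}{\mathrm{d}t}G_{f_2}(u,t)=-f_2(w)\,G_{f_2}(u,t)$. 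Rearranging gives $\left(\eta^\theta-\frac{\mathrm{d}}{\mathrm{d}t}\right)G_{f_2}(u,t)=(\eta^\theta+f_2(w))\,G_{f_2}(u,t)=(\eta+w)^\theta G_{f_2}(u,t)$, so for each fixed $u$ the function $G_{f_2}(\cdot,t)$ is an eigenfunction of $\eta^\theta-\frac{\mathrm{d}}{\mathrm{d}t}$ with eigenvalue $(\eta+w)^\theta$.

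Next I would apply the $1/\theta$-th power of this operator. Expanding $\left(\eta^\theta-\frac{\mathrm{d}}{\mathrm{d}t}\right)^{1/\theta}=\eta\left(1-\eta^{-\theta}\frac{\mathrm{d}}{\mathrm{d}t}\right)^{1/\theta}=\eta\sum_{m\ge0}\binom{1/\theta}{m}\left(-\eta^{-\theta}\right)^m\frac{\mathrm{d}^m}{\mathrm{d}t^m}$ and using $\frac{\mathrm{d}^m}{\mathrm{d}t^m}G_{f_2}=(-f_2(w))^m G_{f_2}$, the series collapses by the binomial theorem to $\eta\left(1+\eta^{-\theta}f_2(w)\right)^{1/\theta}G_{f_2}=\eta\left(\eta^{-\theta}(\eta+w)^\theta\right)^{1/\theta}G_{f_2}=(\eta+w)G_{f_2}$. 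Equivalently, one simply raises the eigenvalue $(\eta+w)^\theta$ to the power $1/\theta$ by functional calculus. Either way this yields the clean pgf identity $\left(\eta^\theta-\frac{\mathrm{d}}{\mathrm{d}t}\right)^{1/\theta}G_{f_2}(u,t)=(\eta+w)\,G_{f_2}(u,t)$.

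Finally, since $\eta+w=\eta+\Lambda-\sum_{j=1}^{k}\lambda_j u^j$ and the operator in $t$ commutes with extraction of the coefficient of $u^n$, I would compare coefficients, writing $p_{f_2}(n,t)=[u^n]G_{f_2}(u,t)$. This gives $[u^n]\{(\eta+\Lambda)G_{f_2}\}=(\eta+\Lambda)p_{f_2}(n,t)$ and $[u^n]\{\sum_{j}\lambda_j u^j G_{f_2}\}=\sum_{j=1}^{\min\{n,k\}}\lambda_j p_{f_2}(n-j,t)$, where the upper limit $\min\{n,k\}$ appears because $[u^{n-j}]G_{f_2}$ vanishes for $j>n$. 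These two contributions reproduce exactly the stated system.

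The hard part will be making rigorous the operator $\left(\eta^\theta-\frac{\mathrm{d}}{\mathrm{d}t}\right)^{1/\theta}$ together with the collapse of the binomial series, since $\sum_{m}\binom{1/\theta}{m}y^m=(1+y)^{1/\theta}$ converges only for $|y|<1$, whereas here $y=\eta^{-\theta}f_2(w)$ can exceed $1$ when $w$ is large. I would resolve this either by defining the operator through functional calculus (via its Laplace symbol), so that it acts on the exponential $e^{-tf_2(w)}$ by multiplication by $(\eta^\theta+f_2(w))^{1/\theta}$, or by first establishing the pgf identity for $u$ in the neighbourhood where $y<1$ and then using analyticity of $G_{f_2}$ in $u$ to propagate the coefficient-wise identity to every $n$; in the latter route I would also justify interchanging the $t$-derivatives with the power series in $u$.
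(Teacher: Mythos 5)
Your route is genuinely different from the paper's. The paper never touches the pgf: it starts from the subordination integral $p_{f_2}(n,t)=\int_0^\infty p(n,x)\,h_{\eta,\theta}(x,t)\,\mathrm{d}x$, imports from Beghin (2015) the governing equation $\frac{\partial}{\partial x}h_{\eta,\theta}(x,t)=-\eta\, h_{\eta,\theta}(x,t)+\left(\eta^{\theta}-\frac{\partial}{\partial t}\right)^{1/\theta}h_{\eta,\theta}(x,t)$ for the tempered stable density, integrates by parts in $x$ (using the vanishing of $h_{\eta,\theta}$ at $0$ and $\infty$), and then invokes the Kolmogorov system (2.1) of the GCP with $\alpha=1$ to convert $\frac{\mathrm{d}}{\mathrm{d}x}p(n,x)$ into the $\Lambda$- and $\lambda_j$-terms. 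Your pgf argument is cleaner where it works: the eigenvalue computation $\left(\eta^{\theta}-\frac{\mathrm{d}}{\mathrm{d}t}\right)G_{f_2}=(\eta+w)^{\theta}G_{f_2}$ and the coefficient extraction (including the $\min\{n,k\}$ truncation) are correct and avoid both the integration by parts and the explicit appeal to the GCP system. What it buys is economy; what it costs is that the entire content of the proposition is pushed into the single claim that $\left(\eta^{\theta}-\frac{\mathrm{d}}{\mathrm{d}t}\right)^{1/\theta}$ acts on $e^{-ct}$ by multiplication by $(\eta^{\theta}+c)^{1/\theta}$. That claim is exactly where the paper leans on the cited result of Beghin (2015): the proposition is only well-posed relative to a specific definition of the fractional operator (a binomial-type series of integer-order derivatives), and under that definition the series applied to $e^{-tf_2(w)}$ converges only when $f_2(w)<\eta^{\theta}$, i.e.\ for $u$ near $1$. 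Your analytic-continuation fix then still requires showing that the left-hand side, \emph{as defined by that series}, is analytic in $u$ on a domain reaching $u=0$ — which is not automatic, since outside the convergence region the series definition gives you nothing to continue. The functional-calculus fix is sound but must be shown to agree with the series definition on the class of functions at hand; otherwise you have proved a theorem about a different operator. So your proof is a valid alternative provided you supply that compatibility lemma, which is essentially the role played in the paper by the ready-made identity for $h_{\eta,\theta}$; note also that the paper's method is uniform across its gamma and inverse-Gaussian analogues, where no pgf eigenvalue identity is as readily available.
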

\begin{proof}
From (\ref{tss}), we have
\begin{equation}\label{wslp09}
p_{f_{2}}(n,t)=\int_{0}^{\infty}p(n,x)	h_{\eta,\theta}(x,t)\,\mathrm{d}x,
\end{equation}
where $h_{\eta,\theta}(\cdot,t)$ is the pdf of TSS. 
	
Let $\delta_0(x)$ denote the Dirac delta function. On using the fact that $\lim_{x\to 0}h_{\eta,\theta}(x,t)=\lim_{x\to \infty}h_{\eta,\theta}(x,t)=0$ and the following result (See Beghin (2015), Eq. (15)):
\begin{equation*}
\frac{\partial}{\partial x}h_{\eta,\theta}(x,t)=-\eta h_{\eta,\theta}(x,t)+\left(\eta^{\theta}-\frac{\partial}{\partial t}\right)^{1/\theta}h_{\eta,\theta}(x,t),
\end{equation*}
with the initial conditions $h_{\eta,\theta}(x,0)=\delta_0(x)$ and $h_{\eta,\theta}(0,t)=0$ in (\ref{wslp09}), we get 
\begin{align*}
\left(\eta^{\theta}-\frac{\mathrm{d}}{\mathrm{d} t}\right)^{1/\theta}p_{f_{2}}(n,t)&=\int_{0}^{\infty}p(n,x)\left(\eta h_{\eta,\theta}(x,t)+\frac{\partial}{\partial x}	h_{\eta,\theta}(x,t)\right)\,\mathrm{d}x\\
&=\eta p_{f_{2}}(n,t)-\int_{0}^{\infty}h_{\eta,\theta}(x,t)\frac{\mathrm{d}}{\mathrm{d} x}p(n,x)	\,\mathrm{d}x\\
&=\eta p_{f_{2}}(n,t)- \int_{0}^{\infty}\left(-\Lambda p(n,x)+	\sum_{j=1}^{\min\{n,k\}}\lambda_{j}p(n-j,x)\right)h_{\eta,\theta}(x,t)\,\mathrm{d}x,
\end{align*}
where we have used (\ref{cre}) with $\alpha=1$ in the last step. The proof follows on using (\ref{wslp09}).
\end{proof}
If $\theta^{-1}=m\ge2$  is an integer then the pmf $p_{f_{2}}(n,t)$ solves 
\begin{equation}\label{2lowsaa}
\sum_{i=1}^{m}(-1)^{i}\binom{m}{i}\eta^{(1-i/m)}\frac{\mathrm{d}^{i}}{\mathrm{d} t^{i}}p_{f_{2}}(n,t)=\Lambda p_{f_{2}}(n,t)-\sum_{j=1}^{\min\{n,k\}}\lambda_{j}p_{f_{2}}(n-j,t).
\end{equation}
Further, on putting $k=1$ in (\ref{2lowsaa}), we get the system of differential equations that governs the state probabilities of Poisson process time-changed by TSS (see Kumar {\it et al.} (2011), Remark 4.1).  

\subsubsection{GCP time-changed by inverse Gaussian subordinator}
Let $\{Y(t)\}_{t\ge0}$ be an IGS whose pdf is given by (see Applebaum (2009), Eq. (1.27))
\begin{equation*}
q(x,t)=(2\pi)^{-1/2}\delta tx^{-3/2}\exp\left\{\delta\gamma t-\frac{1}{2}(\delta^{2}t^{2}x^{-1}+\gamma^{2}x)\right\}, \ \ x>0,
\end{equation*}
where $\delta>0$ and $\gamma>0$. Its associated Bern\v stein function is
\begin{equation}\label{ploiuy67}
f_{3}(s)=\delta\left(\sqrt{2s+\gamma^2}-\gamma\right),\ \ s>0.
\end{equation}

On taking $f_3$ as the Bern\v stein function in (\ref{zft}), we get the GCP time-changed by an independent IGS as 
\begin{equation}\label{ig}
\mathcal{Z}_{f_{3}}(t)\coloneqq M(Y(t)),\ \ t\ge0.
\end{equation}

On using (\ref{bgd43}), the distribution of its jumps is obtained in the  following form:
\begin{equation*}
\mathrm{Pr}\{\mathcal{Z}_{f_{3}}(h)=n\}=\begin{cases*}
1-h\delta\left(\sqrt{2\Lambda+\gamma^2}-\gamma\right)+o(h), \ n=0\\
\displaystyle- \delta h\sum_{ \Omega(k,n)}2^{s_{k}}\left(\frac{1}{2}\right)_{s_{k}}\left(2\Lambda+\gamma^2\right)^{\frac{1}{2}-s_{k}}\prod_{j=1}^{k}\frac{(-\lambda_{j})^{x_{j}}}{x_{j}!}+o(h),\ \ n\ge1.
\end{cases*}
\end{equation*}
where $\left(\frac{1}{2}\right)_{s_{k}}$ denotes the falling factorial.

From (\ref{pgff}), the pgf of $\mathcal{Z}_{f_{3}}(t)$ is given by
\begin{equation*}
G_{f_{3}}(u,t)=\exp\left\{-t\delta\left(\sqrt{2\sum_{j=1}^{k}\lambda_{j}(1-u^{j})+\gamma^{2}}-\gamma\right)\right\}.
\end{equation*}
\begin{proposition}\label{pwsqk87}
The L\'evy measure of $\mathcal{Z}_{f_{3}}(t)$ is given by
\begin{equation*}
\Pi_{f_{3}}(\mathrm{d}x)=	\frac{\delta}{\sqrt{2\pi}}\sum_{n=1}^{\infty}\sum_{ \Omega(k,n)}\prod_{j=1}^{k}\frac{\lambda_{j}^{x_{j}}}{x_{j}!}\frac{\Gamma(s_{k}-1/2)}{(\Lambda+\gamma^{2}/2)^{s_{k}-1/2}}\delta_{n}(\mathrm{d}x).
\end{equation*}
\end{proposition}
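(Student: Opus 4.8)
The plan is to follow the template established in the proof of Proposition \ref{pwsq23} for the gamma case, subordinating the GCP to the inverse Gaussian subordinator via Sato's formula for the L\'evy measure of a subordinated L\'evy process (Sato (1999), Theorem 30.1).

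First I would read off the L\'evy measure of the IGS from its associated Bern\v stein function (\ref{ploiuy67}). A short verification---differentiating $f_{3}(s)=\delta(\sqrt{2s+\gamma^2}-\gamma)$ to get $f_{3}'(s)=\delta/\sqrt{2s+\gamma^2}$ and matching it against the Laplace-side derivative of a candidate density---confirms that
\[
\mu_{Y}(\mathrm{d}s)=\frac{\delta}{\sqrt{2\pi}}\,s^{-3/2}e^{-\gamma^{2}s/2}\,\mathrm{d}s.
\]
Next, invoking the subordination result, the L\'evy measure of $\mathcal{Z}_{f_{3}}(t)=M(Y(t))$ is obtained by integrating the law of the outer process $M$ against $\mu_{Y}$, namely
\[
\Pi_{f_{3}}(\mathrm{d}x)=\int_{0}^{\infty}\sum_{n=1}^{\infty}p(n,s)\,\delta_{n}(\mathrm{d}x)\,\mu_{Y}(\mathrm{d}s).
\]

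Substituting the GCP state probabilities (\ref{p(n,t)}) and interchanging the (nonnegative) sum and integral by Tonelli reduces the problem, for each admissible tuple in $\Omega(k,n)$, to the single gamma integral
\[
\frac{\delta}{\sqrt{2\pi}}\int_{0}^{\infty}s^{s_{k}-3/2}e^{-(\Lambda+\gamma^{2}/2)s}\,\mathrm{d}s=\frac{\delta}{\sqrt{2\pi}}\frac{\Gamma(s_{k}-1/2)}{(\Lambda+\gamma^{2}/2)^{s_{k}-1/2}},
\]
where $s_{k}=x_{1}+x_{2}+\dots+x_{k}$. Carrying along the factors $\prod_{j=1}^{k}\lambda_{j}^{x_{j}}/x_{j}!$ then assembles exactly the stated expression for $\Pi_{f_{3}}(\mathrm{d}x)$.

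The only delicate point---and the main obstacle to check---is convergence of this integral at the origin, which requires $s_{k}-1/2>0$, i.e. that the $s^{-3/2}$ singularity of the IGS L\'evy measure be dominated by the $s^{s_{k}}$ factor coming from the GCP density. This is automatic: every tuple contributing to $\Pi_{f_{3}}$ lies in $\Omega(k,n)$ with $n\ge1$, so $x_{1}+2x_{2}+\dots+kx_{k}=n\ge1$ forces $s_{k}\ge1$ and hence $s_{k}-1/2\ge1/2>0$. Thus $\Gamma(s_{k}-1/2)$ is finite for every term, the interchange of summation and integration is legitimate, and no further convergence analysis is needed. I expect the proof, like that of Proposition \ref{pwsq}, to be essentially a transcription of the gamma computation with the gamma L\'evy measure replaced by $\mu_{Y}$.
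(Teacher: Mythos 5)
Your proposal is correct and is essentially identical to the paper's argument: the paper dispatches this proposition by noting that it "follows similar lines" to Proposition \ref{pwsq23}, i.e.\ by applying Sato's Theorem 30.1 with the gamma L\'evy measure replaced by $\mu_{Y}(\mathrm{d}s)=\delta e^{-\gamma^{2}s/2}\mathrm{d}s/\sqrt{2\pi s^{3}}$ and evaluating the resulting gamma integral $\int_{0}^{\infty}s^{s_{k}-3/2}e^{-(\Lambda+\gamma^{2}/2)s}\,\mathrm{d}s$, exactly as you do. Your remark that $n\ge1$ forces $s_{k}\ge1$, so the $s^{-3/2}$ singularity is integrable term by term, is a detail the paper leaves implicit.
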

The proof of Proposition \ref{pwsqk87} follows similar lines to that of Proposition \ref{pwsq23} by using the L\'evy measure of IGS, that is,  $\mu_{Y}(\mathrm{d}s)=\delta e^{-\gamma^{2}s/2}\mathrm{d}s/\sqrt{2\pi s^{3}}$. 
\begin{proposition}\label{p4.1}
The pmf  $p_{f_{3}}(n,t)=\mathrm{Pr}\{\mathcal{Z}_{f_{3}}(t)=n\}$ satisfies the following system of differential equations:
\begin{align}\label{lkii1}
\left(\frac{\mathrm{d}^2}{\mathrm{d}t^2}-2\delta \gamma \frac{\mathrm{d}}{\mathrm{d}t}\right) p_{f_{3}}(n,t)&=2\delta^2\left(\Lambda p_{f_{3}}(n,t)-\sum_{j=1}^{\min\{n,k\}}\lambda_{j}p_{f_{3}}(n-j,t)\right),\ \ n\ge0.
\end{align}
\end{proposition}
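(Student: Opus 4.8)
The plan is to mimic the strategy used for the tempered stable case: start from the subordination integral
\begin{equation*}
p_{f_{3}}(n,t)=\int_{0}^{\infty}p(n,x)\,q(x,t)\,\mathrm{d}x,
\end{equation*}
where $p(n,\cdot)$ is the pmf of the GCP and $q(\cdot,t)$ is the pdf of the IGS, and to transfer a governing differential equation for $q(x,t)$ onto $p_{f_{3}}(n,t)$. The new feature here is that the Bern\v stein function $f_{3}(s)=\delta(\sqrt{2s+\gamma^{2}}-\gamma)$ in (\ref{ploiuy67}) contains a square root, so a first-order relation in $t$ will not close; I expect instead to need a second-order-in-$t$ equation, which is exactly why the operator $\mathrm{d}^{2}/\mathrm{d}t^{2}-2\delta\gamma\,\mathrm{d}/\mathrm{d}t$ appears in the statement.

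First I would establish the required PDE for the IGS density. Writing $\tilde q(s,t)=\int_{0}^{\infty}e^{-sx}q(x,t)\,\mathrm{d}x=e^{-tf_{3}(s)}$, differentiation in $t$ gives $(\partial_{t}-\delta\gamma)\tilde q=-\delta\sqrt{2s+\gamma^{2}}\,\tilde q$, and applying $(\partial_{t}-\delta\gamma)$ once more rationalises the square root:
\begin{equation*}
(\partial_{t}-\delta\gamma)^{2}\tilde q=\delta^{2}(2s+\gamma^{2})\tilde q.
\end{equation*}
Expanding the left side and cancelling the common term $\delta^{2}\gamma^{2}\tilde q$ leaves $(\partial_{t}^{2}-2\delta\gamma\,\partial_{t})\tilde q=2\delta^{2}s\,\tilde q$. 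Since $q(0^{+},t)=0$, the Laplace transform of $\partial_{x}q$ is exactly $s\tilde q$, so inverting yields the key identity
\begin{equation*}
\left(\frac{\partial^{2}}{\partial t^{2}}-2\delta\gamma\frac{\partial}{\partial t}\right)q(x,t)=2\delta^{2}\frac{\partial}{\partial x}q(x,t).
\end{equation*}
The same identity could be checked directly from the explicit density $q(x,t)$, but the Laplace route avoids that computation.

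With this in hand, I would apply the operator $\mathrm{d}^{2}/\mathrm{d}t^{2}-2\delta\gamma\,\mathrm{d}/\mathrm{d}t$ under the integral sign, substitute the identity above, and integrate by parts in $x$. The boundary terms vanish because $q(0^{+},t)=0$ and $q(x,t)\to0$ as $x\to\infty$ while $p(n,x)$ stays bounded, leaving $-2\delta^{2}\int_{0}^{\infty}q(x,t)\,\frac{\mathrm{d}}{\mathrm{d}x}p(n,x)\,\mathrm{d}x$. Finally I would replace $\frac{\mathrm{d}}{\mathrm{d}x}p(n,x)$ using the forward Kolmogorov equation for the GCP, namely (\ref{cre}) with $\alpha=1$, which reads $\frac{\mathrm{d}}{\mathrm{d}x}p(n,x)=-\Lambda p(n,x)+\sum_{j=1}^{\min\{n,k\}}\lambda_{j}p(n-j,x)$; integrating each term back against $q(x,t)$ reproduces $\Lambda p_{f_{3}}(n,t)-\sum_{j=1}^{\min\{n,k\}}\lambda_{j}p_{f_{3}}(n-j,t)$ and gives (\ref{lkii1}). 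The main obstacle is the first step: producing the second-order governing equation for $q(x,t)$ by squaring away the square root, together with the routine justification of differentiating under the integral sign and of the vanishing boundary terms.
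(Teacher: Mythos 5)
Your proposal is correct and follows essentially the same route as the paper: apply $\mathrm{d}^2/\mathrm{d}t^2-2\delta\gamma\,\mathrm{d}/\mathrm{d}t$ under the subordination integral, use the identity $\left(\partial_t^2-2\delta\gamma\,\partial_t\right)q(x,t)=2\delta^2\,\partial_x q(x,t)$, integrate by parts using the vanishing of $q$ at $x=0^+$ and $x=\infty$, and substitute the GCP forward equation (\ref{cre}) with $\alpha=1$. The only difference is that the paper simply cites this governing equation for the IGS density from Vellaisamy and Kumar (2018), Eq. (3.3), whereas you rederive it via the Laplace transform $e^{-tf_3(s)}$ — a correct and self-contained alternative to the citation.
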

\begin{proof}
From (\ref{ig}), we have
\begin{equation}\label{123}
p_{f_{3}}(n,t)=\int_0^{\infty}p(n,x)q(x,t)\,\mathrm{d}x.
\end{equation}	
On taking derivatives, we get
\begin{equation*}
\frac{\mathrm{d}}{\mathrm{d}t} p_{f_{3}}(n,t) = \int_0^{\infty}p(n,x) \frac{\partial}{\partial t} q(x,t)\,\mathrm{d}x
\end{equation*}
and
\begin{equation*}
\frac{\mathrm{d}^2}{\mathrm{d} t^2} p_{f_{3}}(n,t) = \int_0^{\infty} p(n,x) \frac{\partial^2}{\partial t^2} q(x,t)\,\mathrm{d}x. 
\end{equation*}
On using the fact that $\lim_{x\to\infty}q(x,t)=\lim_{x\to 0}q(x,t)=0$ and the following result for the pdf of IGS  (see Vellaisamy and Kumar (2018), Eq. (3.3)):
\begin{equation*}
\frac{\partial^2}{\partial t^2}q(x,t)-2\delta \gamma \frac{\partial}{\partial t}q(x,t) = 2\delta^2\frac{\partial}{\partial x}q(x,t)
\end{equation*}
in (\ref{123}), we get
\begin{align*}
\left(\frac{\mathrm{d}^2}{\mathrm{d} t^2}-2\delta \gamma \frac{\mathrm{d}}{\mathrm{d} t}\right) p_{f_{3}}(n,t)
&=\int_0^{\infty}p(n,x) \left(\frac{\partial^2}{\partial t^2}-2\delta \gamma \frac{\partial}{\partial t}\right) q(x,t)\,\mathrm{d}x\\
&= 2\delta^2 \int_0^{\infty} p(n,x)\frac{\partial}{\partial x} q(x,t)\,\mathrm{d}x\\ 
&= -2\delta^2 \int_0^{\infty} q(x,t)\frac{\mathrm{d}}{\mathrm{d}x} p(n,x)\, \mathrm{d}x\label{jahxa11}\\
&=-2\delta^2\int_0^{\infty}\bigg(-\Lambda p(n,x)+\sum_{j=1}^{\min\{n,k\}}\lambda_{j}p(n-j,x)\bigg)q(x,t)\,\mathrm{d}x,
\end{align*}
where we have used (\ref{cre}) with $\alpha=1$. The proof is complete on using (\ref{123}). 
\end{proof}
\begin{remark}
Taking $\lambda_{j}=\lambda$ for all $j=1,2,\ldots,k$ in (\ref{lkii1}), we get the system of differential equations that governs the state probabilities of a time-changed PPoK (see Sengar {\it et al.} (2020), Theorem 5.1). For $k=1$ in (\ref{lkii1}), we get the corresponding result for the Poisson process time-changed by IGS (see Kumar {\it et al.} (2011), Proposition 2.1).
\end{remark}

\section{GFCP time-changed by inverse subordinator}
Here, we consider another time-changed version of the GFCP by using the inverse subordinator. The first passage time  of L\'evy subordinator $\{D_f (t)\}_{t\ge0}$ is called the inverse subordinator. It is defined as
\begin{equation*}
H_f (t)\coloneqq\inf\{r\ge0: D_f (r)> t\}, \ \ t\ge0.
\end{equation*}
Note that $\mathbb{E}\left(H^r_f(t)\right)<\infty$ for all $r>0$ (see Aletti {\it et al.} (2018), Section 2.1). 

We define a time-changed version of the GFCP by time-changing it with an independent inverse subordinator as follows:
\begin{equation*}
\bar{\mathcal{Z}}^{\alpha}_{f}(t)\coloneqq M^{\alpha}(H_f (t)),\ \ t\ge0.
\end{equation*}
We call the process $\{\bar{\mathcal{Z}}^{\alpha}_{f}(t)\}_{t\ge0}$ as the time-changed  generalized fractional counting process-II (TCGFCP-II).

For $\alpha=1$, the TCGFCP-II reduces to a time-changed version of the GCP, namely, the time-changed generalized counting process-II (TCGCP-II), that is, 
\begin{equation}\label{loppp1}
\bar{\mathcal{Z}}_{f}(t)\coloneqq M(H_f (t)),\ \ t\ge0.
\end{equation}
The pmf $\bar{p}_{f}(n,t)=\mathrm{Pr}\{\bar{\mathcal{Z}}_{f}(t)=n\}$ of TCGCP-II is given by 
\begin{equation*}
\bar{p}_{f}(n,t)=\sum_{ \Omega(k,n)}\prod_{j=1}^{k}\frac{\lambda_{j}^{x_{j}}}{x_{j}!}\mathbb{E}\left(e^{-\Lambda H_{f}(t)}H_{f}^{s_{k}}(t)\right),\ \ n\geq0.
\end{equation*}

The proof of the above result follows similar lines to that of Theorem \ref{thm1}.

Let $l_{1}$, $l_{2}$ and $d$ be as given in Section \ref{section4.1}. The mean, variance and covariance of TCGFCP-II are given by\\
	
\noindent $(i)$ $\mathbb{E}\left(\bar{\mathcal{Z}}^{\alpha}_{f}(t)\right)=l_{1}\mathbb{E}\left(H_{f}^{\alpha}(t)\right)$,\vspace*{.1cm}\\
\noindent $(ii)$ $\operatorname{Var}\left(\bar{\mathcal{Z}}^{\alpha}_{f}(t)\right)=\mathbb{E}\left(H_{f}^{\alpha}(t)\right)\left(l_{2}-l_{1}^{2}\mathbb{E}\left(H_{f}^{\alpha}(t)\right)\right)+2d\mathbb{E}\left(H_{f}^{2\alpha}(t)\right)$,\vspace*{.1cm}\\
\noindent $(iii)$ $ \operatorname{Cov}\left(\bar{\mathcal{Z}}^{\alpha}_{f}(s),\bar{\mathcal{Z}}^{\alpha
}_{f}(t)\right)=l_{2}\mathbb{E}\left(H_{f}^{\alpha}(s)\right)+ d\mathbb{E}\left(H_{f}^{2\alpha}(s)\right)-l_{1}^{2}\mathbb{E}\left(H_{f}^{\alpha}(s)\right)\mathbb{E}\left(H_{f}^{\alpha}(t)\right)$\vspace*{.1cm}\\
$\hspace*{4.8cm}
+ \alpha l_{1}^{2}  \mathbb{E}\left(H_{f}^{2\alpha}(t)B\left(\alpha,\alpha+1;H_{f}(s)/H_{f}(t)\right)\right)$, $0<s\le t$.

The proof of  $(i)$-$(iii)$ follows similar lines to the corresponding results of TCGFCP-I (see Section \ref{section4.1}). Thus, the proofs are omitted. 

Next we discuss two particular cases of the TCGCP-II.
\subsection{GCP time-changed by the inverse TSS}
The inverse TSS $\{\mathscr{L}_{\eta,\theta}(t)\}_{t\ge0}$ is defined as the first passage time of TSS $\{\mathscr{D}_{\eta,\theta}(t)\}_{t\ge0}$, $0<\theta<1$, $\eta>0$, that is,
\begin{equation*}
\mathscr{L}_{\eta,\theta}(t)\coloneqq \inf \{ r\ge 0 : \mathscr{D}_{\eta,\theta}(r)>t\},\ \ t\ge0.
\end{equation*}
In (\ref{loppp1}), if we choose the Bern\v stein function $f_{2}$ which is given in (\ref{bs1}) then we get the GCP time-changed by an independent inverse TSS. Thus,
\begin{equation}\label{inv1}
\bar{\mathcal{Z}}_{f_{2}}(t)\coloneqq M(\mathscr{L}_{\eta,\theta} (t)),\ \ t\ge0.
\end{equation}
\begin{proposition}
The pmf $\bar{p}_{f_{2}}(n,t)=\mathrm{Pr}\{\bar{\mathcal{Z}}_{f_2}(t)=n\}$ solves the following system of  differential equations:
\begin{align*}
\left(\eta+\frac{\mathrm{d}}{\mathrm{d} t}\right)^{\theta}\bar{p}_{f_{2}}(n,t)&=\eta^{\theta}\bar{p}_{f_{2}}(n,t)-t^{-\theta}E^{1-\theta}_{1,1-\theta}(-\eta t)p(n,0)+p(n,x)l_{\eta,\theta}(x,t)\big|_{x=0}\\
&\ \ -\Lambda\bar{p}_{f_{2}}(n,t)+\sum_{j=1}^{\min\{n,k\}}\lambda_{j}\bar{p}_{f_{2}}(n-j,t),\ \ n\ge 0.
\end{align*}
\end{proposition}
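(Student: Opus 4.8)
The plan is to proceed exactly as in the TSS case treated above, but with the density of the tempered stable subordinator replaced by that of its inverse. First I would write the pmf as the mixture
\begin{equation*}
\bar{p}_{f_{2}}(n,t)=\int_{0}^{\infty}p(n,x)\,l_{\eta,\theta}(x,t)\,\mathrm{d}x,
\end{equation*}
where $p(n,\cdot)$ is the GCP pmf in (\ref{p(n,t)}) and $l_{\eta,\theta}(\cdot,t)$ is the pdf of the inverse TSS $\{\mathscr{L}_{\eta,\theta}(t)\}_{t\ge0}$ from (\ref{inv1}). Since $\mathscr{L}_{\eta,\theta}(0)=0$ a.s., the initial value is $\bar{p}_{f_{2}}(n,0)=p(n,0)$. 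The key analytic input I would invoke is the governing equation for the inverse TSS density, namely the tempered fractional relation ${}^{RL}(\eta+\partial_{t})^{\theta}l_{\eta,\theta}(x,t)=\eta^{\theta}l_{\eta,\theta}(x,t)-\partial_{x}l_{\eta,\theta}(x,t)$, the tempered analogue of the identity $D_{t}^{\alpha}h_{\alpha}(u,t)=-\partial_{u}h_{\alpha}(u,t)$ used in the proof of (\ref{pmf}), the operator on the left being of Riemann--Liouville type.

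Then I would apply the Riemann--Liouville tempered derivative to both sides and move it under the integral sign, since it acts only in $t$, obtaining $\int_{0}^{\infty}p(n,x)\bigl(\eta^{\theta}l_{\eta,\theta}(x,t)-\partial_{x}l_{\eta,\theta}(x,t)\bigr)\,\mathrm{d}x$. The first piece returns $\eta^{\theta}\bar{p}_{f_{2}}(n,t)$. For the second piece I integrate by parts in $x$: using $l_{\eta,\theta}(\infty,t)=0$ the boundary contribution is $p(n,0)\,l_{\eta,\theta}(0,t)$, while the remaining integral $\int_{0}^{\infty}(\mathrm{d}p(n,x)/\mathrm{d}x)\,l_{\eta,\theta}(x,t)\,\mathrm{d}x$ is handled by substituting (\ref{cre}) with $\alpha=1$, i.e. $\mathrm{d}p(n,x)/\mathrm{d}x=-\Lambda p(n,x)+\sum_{j=1}^{\min\{n,k\}}\lambda_{j}p(n-j,x)$, which integrates against $l_{\eta,\theta}$ to give $-\Lambda\bar{p}_{f_{2}}(n,t)+\sum_{j=1}^{\min\{n,k\}}\lambda_{j}\bar{p}_{f_{2}}(n-j,t)$. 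Finally I would convert the Riemann--Liouville tempered derivative on the left into the Caputo tempered operator $(\eta+\mathrm{d}/\mathrm{d}t)^{\theta}$ appearing in the statement, via the tempered analogue of (\ref{relation}); this subtracts $\bar{p}_{f_{2}}(n,0)$ times the initial-value kernel $t^{-\theta}e^{-\eta t}/\Gamma(1-\theta)$. Recognising that $t^{-\theta}e^{-\eta t}/\Gamma(1-\theta)=t^{-\theta}E^{1-\theta}_{1,1-\theta}(-\eta t)$, which follows directly from the three-parameter Mittag--Leffler definition because the factors $\Gamma(j+1-\theta)$ cancel and the series collapses to $e^{-\eta t}$, and using $\bar{p}_{f_{2}}(n,0)=p(n,0)$, produces the term $-t^{-\theta}E^{1-\theta}_{1,1-\theta}(-\eta t)p(n,0)$. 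Collecting the five contributions yields the asserted system.

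The main obstacle is the first step: pinning down the governing equation for the inverse TSS density and, in particular, getting the Caputo-versus-Riemann--Liouville bookkeeping right. Unlike the inverse stable case, where $h_{\alpha}(0+,t)=t^{-\alpha}/\Gamma(1-\alpha)$ forces the boundary term and the initial-value correction to coincide, here $l_{\eta,\theta}(0,t)$ and $t^{-\theta}e^{-\eta t}/\Gamma(1-\theta)$ are genuinely distinct, so both survive in the final equation; keeping their signs straight is the delicate point. The remaining technical issues, namely justifying differentiation under the integral sign for the nonlocal tempered operator and the vanishing of the boundary term as $x\to\infty$, follow from the integrability and decay of $l_{\eta,\theta}(\cdot,t)$ together with the boundedness of $p(n,\cdot)$.
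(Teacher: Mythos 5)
Your argument is correct and takes essentially the same route as the paper: there, $\bar{p}_{f_{2}}(n,t)$ is likewise written as a mixture over the inverse-TSS density, the governing equation for $l_{\eta,\theta}$ from Kumar {\it et al.} (2019), Eq.~(25) is invoked---which already packages your Riemann--Liouville-to-Caputo correction as the explicit term $-t^{-\theta}E^{1-\theta}_{1,1-\theta}(-\eta t)\delta_0(x)$---followed by integration by parts in $x$ using $\lim_{x\to\infty}l_{\eta,\theta}(x,t)=0$ and substitution of (\ref{cre}) with $\alpha=1$. Your separate check that $E^{1-\theta}_{1,1-\theta}(-\eta t)=e^{-\eta t}/\Gamma(1-\theta)$ and your observation that the boundary term $p(n,0)l_{\eta,\theta}(0,t)$ and the initial-value kernel term are genuinely distinct here (unlike the inverse stable case) are both accurate.
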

\begin{proof}
From (\ref{inv1}), we have
\begin{equation}\label{gfdt}
\bar{p}_{f_{2}}(n,t)=\int_{0}^{\infty}p(n,x)l_{\eta,\theta}(x,t)\,\mathrm{d}x,
\end{equation}
where $l_{\eta,\theta}(\cdot,t)$ is the pdf of $\mathscr{L}_{\eta,\theta} (t)$.
	
The following result holds (See Kumar {\it et al.} (2019), Eq. (25)):
\begin{equation}\label{qmk11}
\frac{\partial}{\partial x}l_{\eta,\theta}(x,t)=-\left(\eta+\frac{\partial}{\partial t}\right)^{\theta}l_{\eta,\theta}(x,t)+\eta^{\theta} l_{\eta,\theta}(x,t)-t^{-\theta}E^{1-\theta}_{1,1-\theta}(-\eta t)\delta_0(x),
\end{equation}
where $\delta_0(x)=l_{\eta,\theta}(x,0)$. From (\ref{gfdt}) and (\ref{qmk11}), we get
\begin{align*}
\left(\eta+\frac{\mathrm{d}}{\mathrm{d} t}\right)^{\theta}\bar{p}_{f_{2}}(n,t)&=\int_{0}^{\infty}p(n,x)\left(\eta^{\theta} l_{\eta,\theta}(x,t)-t^{-\theta}E^{1-\theta}_{1,1-\theta}(-\eta t)\delta_0(x)-\frac{\partial}{\partial x}l_{\eta,\theta}(x,t)\right)\,\mathrm{d}x\\
&=\eta^{\theta}\bar{p}_{f_{2}}(n,t)-t^{-\theta}E^{1-\theta}_{1,1-\theta}(-\eta t)\int_{0}^{\infty}p(n,x)\delta_0(x)\,\mathrm{d}x\\
&\ \ +p(n,x)l_{\eta,\theta}(x,t)\big|_{x=0}+\int_{0}^{\infty}l_{\eta,\theta}(x,t)\frac{\mathrm{d}}{\mathrm{d}x}p(n,x)\,\mathrm{d}x\\
&=\eta^{\theta}\bar{p}_{f_{2}}(n,t)-t^{-\theta}E^{1-\theta}_{1,1-\theta}(-\eta t)p(n,0)+p(n,x)l_{\eta,\theta}(x,t)\big|_{x=0}\\
&\ \ +\int_{0}^{\infty}\left(-\Lambda p(n,x)+	\sum_{j=1}^{\min\{n,k\}}\lambda_{j}p(n-j,x)\right)l_{\eta,\theta}(x,t)\,\mathrm{d}x,
\end{align*}
where we have used $\lim_{x\to \infty}l_{\eta,\theta}(x,t)=0$ (see Alrawashdeh {\it et al.} (2017), Lemma 4.6). On using (\ref{gfdt}), we get the required result.
\end{proof}
\subsection{GCP time-changed by the first passage time of IGS}
The first passage time $\{H(t)\}_{t\ge0}$ of the IGS $\{Y(t)\}_{t\geq0}$ is  defined as
\begin{equation*}
H(t)\coloneqq \inf \{ r\ge 0 : Y(r)>t\},\ \ t\geq0.
\end{equation*}
The function $f_{3}$ given in (\ref{ploiuy67}) is the associated Bern\v stein function for an IGS. In (\ref{loppp1}), if we choose the Bern\v stein function $f_{3}$ then we get the GCP time-changed by an independent $\{H(t)\}_{t\ge0}$, that is,
\begin{equation}\label{inv}
\bar{\mathcal{Z}}_{f_{3}}(t)\coloneqq M(H (t)),\ \ t\ge0.
\end{equation}
\begin{proposition}
The pmf $\bar{p}_{f_3}(n,t)=\mathrm{Pr}\{\bar{\mathcal{Z}}_{f_{3}}(t)=n\}$, $n\geq0$ solves the following system of differential equations:
\begin{equation*}
\delta\left(\gamma^{2}+2\frac{\mathrm{d}}{\mathrm{d} t}\right)^{1/2}\bar{p}_{f_{3}}(n,t)=\left(\delta\gamma-\Lambda\right)\bar{p}_{f_{3}}(n,t)+\sum_{j=1}^{\min\{n,k\}}\lambda_{j}\bar{p}_{f_{3}}(n-j,t)-\delta\gamma \mathrm{Erf}\left(\gamma\sqrt{t/2}\right)p(n,0),
\end{equation*}
where $\mathrm{Erf}(\cdot)$ is the error function.
\end{proposition}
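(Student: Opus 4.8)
The plan is to closely mirror the proof of the preceding proposition (the inverse-TSS case), replacing the inverse-TSS density by the density $\ell(\cdot,t)$ of the inverse IGS $\{H(t)\}_{t\ge0}$. First I would record the subordination identity coming from (\ref{inv}),
\begin{equation*}
\bar{p}_{f_{3}}(n,t)=\int_{0}^{\infty}p(n,x)\ell(x,t)\,\mathrm{d}x,
\end{equation*}
where $p(n,\cdot)$ is the GCP pmf from (\ref{p(n,t)}) and $\ell(x,0)=\delta_{0}(x)$. Everything then hinges on a space-derivative governing equation for $\ell$ analogous to (\ref{qmk11}). Since the Bern\v stein function of the IGS is $f_{3}(s)=\delta(\sqrt{2s+\gamma^{2}}-\gamma)$, I expect this identity to read
\begin{equation*}
\frac{\partial}{\partial x}\ell(x,t)=-\delta\left(\gamma^{2}+2\frac{\partial}{\partial t}\right)^{1/2}\ell(x,t)+\delta\gamma\,\ell(x,t)-\nu(t)\delta_{0}(x),
\end{equation*}
for an explicit coefficient $\nu(t)$ built from the error function, which I would take from the relevant reference on the inverse IGS (the analogue of Kumar {\it et al.} (2019)). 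Isolating and justifying this relation is the first and principal step.

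Next I would apply the operator $\delta(\gamma^{2}+2\frac{\mathrm{d}}{\mathrm{d}t})^{1/2}$ to the subordination integral, commute it under the integral sign onto $\ell$, and substitute the governing equation in the form $\delta(\gamma^{2}+2\partial_{t})^{1/2}\ell=\delta\gamma\,\ell-\partial_{x}\ell-\nu(t)\delta_{0}(x)$. This produces three pieces: the term $\delta\gamma\int_{0}^{\infty}p(n,x)\ell(x,t)\,\mathrm{d}x=\delta\gamma\,\bar{p}_{f_{3}}(n,t)$; the Dirac piece $-\nu(t)\int_{0}^{\infty}p(n,x)\delta_{0}(x)\,\mathrm{d}x=-\nu(t)p(n,0)$; and $-\int_{0}^{\infty}p(n,x)\partial_{x}\ell(x,t)\,\mathrm{d}x$, which I would integrate by parts using $\lim_{x\to\infty}\ell(x,t)=0$. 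The integration by parts yields the $x=0$ boundary value $p(n,0)\ell(0+,t)$ together with $\int_{0}^{\infty}\ell(x,t)\frac{\mathrm{d}}{\mathrm{d}x}p(n,x)\,\mathrm{d}x$.

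Into the last integral I would insert (\ref{cre}) with $\alpha=1$, namely $\frac{\mathrm{d}}{\mathrm{d}x}p(n,x)=-\Lambda p(n,x)+\sum_{j=1}^{\min\{n,k\}}\lambda_{j}p(n-j,x)$, so that integrating against $\ell(x,t)$ gives $-\Lambda\bar{p}_{f_{3}}(n,t)+\sum_{j=1}^{\min\{n,k\}}\lambda_{j}\bar{p}_{f_{3}}(n-j,t)$. Collecting everything, the coefficient of $\bar{p}_{f_{3}}(n,t)$ becomes $\delta\gamma-\Lambda$, the sum term appears as required, and the remaining contribution is $p(n,0)\left(\ell(0+,t)-\nu(t)\right)$. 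The proof closes provided $\ell(0+,t)-\nu(t)=-\delta\gamma\,\mathrm{Erf}(\gamma\sqrt{t/2})$.

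I expect the main obstacle to be exactly this bookkeeping at $x=0$. Unlike the inverse-TSS case, where a boundary term $p(n,x)l_{\eta,\theta}(x,t)\big|_{x=0}$ persists in the final statement, here the $\delta_{0}(x)$ coefficient $\nu(t)$ must combine with the boundary value $\ell(0+,t)$ (which I expect to equal the IGS L\'evy tail $\int_{t}^{\infty}\delta e^{-\gamma^{2}s/2}/\sqrt{2\pi s^{3}}\,\mathrm{d}s$) to collapse to precisely $-\delta\gamma\,\mathrm{Erf}(\gamma\sqrt{t/2})$ with nothing left over. Pinning down the correct form of the inverse-IGS governing equation so that this cancellation is exact, and separately justifying the interchange of the nonlocal operator $(\gamma^{2}+2\frac{\mathrm{d}}{\mathrm{d}t})^{1/2}$ with the $x$-integration (via its integral representation and dominated convergence, as is implicit in the inverse-TSS argument), are the two points that require genuine care; the rest is the routine algebra carried out above.
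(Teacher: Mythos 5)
Your proposal follows the paper's proof essentially verbatim: the same subordination integral, the same space-derivative governing equation for the inverse-IGS density with a Dirac source at $x=0$ (the paper cites Wyloma\'nska {\it et al.} (2016), Eq.\ (2.22), with $\nu(t)=\delta\sqrt{2/\pi t}\,e^{-\gamma^{2}t/2}$), the same integration by parts combined with (\ref{cre}) at $\alpha=1$, and the same final cancellation at $x=0$ using $h(0,t)=\delta e^{-\gamma^{2}t/2}\bigl(\sqrt{2/\pi t}-\gamma e^{\gamma^{2}t/2}\mathrm{Erf}(\gamma\sqrt{t/2})\bigr)$ from Vellaisamy and Kumar (2018), Proposition 2.2, which yields exactly the $-\delta\gamma\,\mathrm{Erf}(\gamma\sqrt{t/2})\,p(n,0)$ term you predicted. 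The two facts you flagged as needing to be pinned down are precisely the two cited inputs the paper relies on.
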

\begin{proof}
Let $h(\cdot,t)$ be the pdf of $H(t)$. From (\ref{inv}), we have
\begin{equation}\label{gfdt1}
\bar{p}_{f_{3}}(n,t)=\int_{0}^{\infty}p(n,x)h(x,t)\,\mathrm{d}x.
\end{equation}
	
On using the following result in (\ref{gfdt1}) (see Wyloma\'nska {\it et al.} (2016), Eq. (2.22)):
\begin{equation*}
\frac{\partial}{\partial x}h(x,t)=-\delta\left(\gamma^{2}+2\frac{\partial}{\partial t}\right)^{1/2}h(x,t)+\delta\gamma h(x,t)-\delta\sqrt{2/\pi t}e^{-\gamma^{2}t/2}\delta_0(x),
\end{equation*}	
where the initial condition is $h(x,0)=\delta_0(x)$, we get
\begin{align*}
\delta\left(\gamma^{2}+2\frac{\mathrm{d}}{\mathrm{d} t}\right)^{1/2}\bar{p}_{f_{3}}(n,t)&=\int_0^{\infty} p(n,x)\left(\delta\gamma h(x,t)-\delta\sqrt{2/\pi t}e^{-\gamma^{2}t/2}\delta_0(x)-\frac{\partial}{\partial x}h(x,t)\right)\mathrm{d}x\\
&=\delta\gamma\bar{p}_{f_{3}}(n,t)-\delta\sqrt{2/\pi t}e^{-\gamma^{2}t/2}p(n,0)\\
&\ \ +p(n,0)h(0,t)+\int_0^{\infty}h(x,t)\frac{\mathrm{d}}{\mathrm{d}x}p(n,x)\mathrm{d}x\\
&=\delta\gamma\bar{p}_{f_{3}}(n,t)-\delta\sqrt{2/\pi t}e^{-\gamma^{2}t/2}p(n,0)+p(n,0)h(0,t)\\
&\ \ +\int_0^{\infty}\left(-\Lambda p(n,x)+	\sum_{j=1}^{\min\{n,k\}}\lambda_{j}p(n-j,x)\right)h(x,t)\mathrm{d}x\\
&=\delta\gamma\bar{p}_{f_{3}}(n,t)-\delta\sqrt{2/\pi t}e^{-\gamma^{2}t/2}p(n,0)+p(n,0)h(0,t)\\
&\ \ -\Lambda\bar{p}_{f_{3}}(n,t)+\sum_{j=1}^{\min\{n,k\}}\lambda_{j}\bar{p}_{f_{3}}(n-j,t),
\end{align*}	
where we have used (\ref{gfdt1}). The proof follows on using the following result (see Vellaisamy and Kumar (2018), Proposition 2.2):
\begin{equation*}
\lim\limits_{x\to0}h(x,t)=h(0,t)=\delta e^{-\gamma^{2}t/2}\left(\sqrt{2/\pi t}-\gamma  e^{\gamma^{2}t/2} \mathrm{Erf}\left(\gamma\sqrt{t/2}\right)\right).
\end{equation*}
\end{proof}

\section{Concluding remarks}
In this paper, we introduce and study the GSP and a fractional version of it, namely, the GFSP. The GSP and GFSP are Skellam type variants of the GCP and GFCP, respectively. Some distributional properties such as the pmf, pgf, mean, variance and covariance are derived for these processes. It is shown that the GSP and GFSP exhibits the LRD property. We obtain the systems of differential equations that govern their state probabilities. Two time-changed versions of the GFCP, namely, TCGFCP-I and TCGFCP-II are considered by time-changing it by an independent L\'evy subordinator and its inverse. We obtain a version of the law of iterated logarithm for the TCGFCP-I. Some particular cases of these time-changed processes are considered by choosing specific L\'evy subordinators such as the gamma subordinator, the TSS, the IGS and their inverse. For these particular cases, we obtain the governing system of differential equations for their state probabilities. It is known that the GCP has application in risk theory (see Kataria and Khandakar (2021c)). We expect the TCGCP-I to have potential application in risk theory as it exhibit the LRD property.

\end{document}